\newtheorem{theorem}{Theorem}
\newtheorem{proposition}[theorem]{Proposition}
\newtheorem{example}[theorem]{Example}
\newtheorem{definition}[theorem]{Definition}
\newtheorem{lemma}[theorem]{Lemma}
\newtheorem{remark}[theorem]{Remark}
\newcommand{\mL}{{\cal L}}
\newcommand{\R}{\mathbb{R}}
\newcommand{\beq}{\begin{equation}}
\newcommand{\eeq}{\end{equation}}
\newcommand{\C}{\mathbb{C}}
\newcommand{\N}{\mathbb{N}}
\newcommand{\Z}{{\mathbb{Z}}}
\newcommand{\A}{{\cal A}}
\newcommand{\mD}{{\mathcal{D}}}
\newcommand{\mX}{{\mathcal X}}
\newcommand{\mA}{{\mathcal A}}
\begin{document}

\title{\bf Fractional derivative in spaces of continuous functions}

\author{Davide Guidetti}

\maketitle

Let $X$ be a Banach space, let $T \in \R^+$. We consider the Banach space $C([0, T]; X)$ of continuous functions with values in $X$. 
We introduce the following operator $B$: 

\begin{equation}\label{eq1}
\left\{\begin{array}{l}
D(B):= \{u \in C^1(0, T; X) : u(0) = 0\}, \\ \\
Bu(t) = u'(t). 
\end{array}
\right. 
\end{equation}
Then it is easy to see that $\rho(B) = \C$. Moreover, $\forall \lambda \in \C$, $\forall f \in C([0, T]; X)$, 
\begin{equation}\label{eq2}
(\lambda - B)^{-1}f(t) = - \int_0^t e^{\lambda (t-s)} f(s) ds. 
\end{equation}
One has 
\begin{equation}\label{eq3}
\|(\lambda - B)^{-1}\|_{\mL(C([0, T]; X)} \leq \frac{1 - e^{Re(\lambda)T}}{Re(\lambda)}. 
\end{equation}
The estimate is extensible to the case $\lambda = 0$, as $\| B^{-1}\|_{\mL(C([0, T]; X))} \leq T$. 

\begin{definition} \label{de1} Let $B$ be a linear   operator in  the Banach space $Y$ (that is, $B : D(B)  \to Y$, with $D(B)$ linear subspace of $Y$). We shall say that it is of type $\omega$, with $0 < \omega < \pi$, if $\rho(B)$ contains 
$\{\lambda : |Arg(\lambda)| > \omega\}$  and, if $\epsilon > 0$, there exists $M(\epsilon) > 0$ such that $\|\lambda (\lambda - A)^{-1}\|_{\mL(Y)} \leq M(\epsilon)$ in case $|Arg(\lambda)| \geq \omega + \epsilon$.
\end{definition}

So  $B$ is  class $\frac{\pi}{2}$. Moreover, $0 \in \rho(B)$. Following \cite{Ta1} (who considers only the case $D(B)$ dense in $Y$), we can define, $\forall \alpha \in \R^+$, the operator $B^{-\alpha}$: we fix $a > 0$, $\theta \in (\frac{\pi}{2}, \pi)$ and the contour 
$$
\Gamma := \{\lambda \in \C : |Arg(\lambda - a)| = \theta\},
$$
oriented from $\infty e^{-i\theta}$ to $\infty e^{-i\theta}$ and set
\begin{equation}\label{eq4}
B^{-\alpha} := - \frac{1}{2\pi i} \int_\Gamma \lambda^{-\alpha} (\lambda - B)^{-1} d\lambda. 
\end{equation}
It is well known that $\forall \alpha, \beta \in \R^+$, 
$$B^{-\alpha} B^{-\beta} = B^{-(\alpha + \beta)}.$$
Then, if $f \in C([0, T]; X)$, we have
\begin{equation}
\begin{array}{c}
B^{-\alpha} f(t) = \frac{1}{2\pi i} \int_\Gamma \lambda^{-\alpha} (\int_0^t e^{\lambda(t-s)} f(s) ds ) d\lambda \\ \\
= \frac{1}{2\pi i} \int_0^t (\int_\Gamma \lambda^{-\alpha} e^{\lambda(t-s)} d\lambda) f(s) ds
\end{array}
\end{equation}

\begin{proposition}\label{pr2}
If $\alpha \in \R^+$ and $f \in C([0, T]; X)$, 
$$
B^{-\alpha} f(t) = \frac{1}{\Gamma (\alpha)} \int_0^t (t - s)^{\alpha - 1} f(s) ds.
$$
\end{proposition}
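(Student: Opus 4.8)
The plan is to reduce the operator identity to a scalar contour integral and then recognize that integral as Hankel's representation of $1/\Gamma(\alpha)$.

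I would begin from the identity for $B^{-\alpha}f(t)$ obtained just before the statement (equation (5)), namely
$$B^{-\alpha}f(t)=\frac{1}{2\pi i}\int_0^t\Big(\int_\Gamma\lambda^{-\alpha}e^{\lambda(t-s)}\,d\lambda\Big)f(s)\,ds .$$
The interchange of integrations used there is legitimate because on $\Gamma$ one has $\mathrm{Re}\,\lambda=a+|\lambda-a|\cos\theta\to-\infty$ (recall $\cos\theta<0$), so, parametrizing each ray by $\rho=|\lambda-a|$, $\int_\Gamma|\lambda|^{-\alpha}e^{\mathrm{Re}\,\lambda\,(t-s)}\,|d\lambda|\le C\int_0^\infty\rho^{-\alpha}e^{-c\rho(t-s)}\,d\rho\lesssim(t-s)^{\alpha-1}$ when $0<\alpha<1$ (and it is bounded in $s$ when $\alpha\ge1$), which is integrable over $s\in[0,t]$. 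Hence it suffices to prove that, for every $\tau>0$,
$$\frac{1}{2\pi i}\int_\Gamma\lambda^{-\alpha}e^{\lambda\tau}\,d\lambda=\frac{\tau^{\alpha-1}}{\Gamma(\alpha)},$$
where $\lambda^{-\alpha}$ denotes the principal branch, holomorphic on $\C\setminus(-\infty,0]$.

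For $0<\alpha<1$ I would evaluate this integral by collapsing $\Gamma$ onto the cut $(-\infty,0]$. The two rays of $\Gamma$ satisfy $\mathrm{Im}\,\lambda=\pm|\lambda-a|\sin\theta$, so each stays in the closed upper, resp. lower, half–plane and meets the real axis only at $a>0$; thus $\Gamma\subset\C\setminus(-\infty,0]$, where $\lambda^{-\alpha}e^{\lambda\tau}$ is holomorphic and decays exponentially as $\mathrm{Re}\,\lambda\to-\infty$. Cauchy's theorem then lets me replace $\Gamma$ by the Hankel-type contour consisting of the lower edge $\lambda=re^{-i\pi}$ ($r$ from $\infty$ to $\epsilon$), the circle $|\lambda|=\epsilon$, and the upper edge $\lambda=re^{i\pi}$ ($r$ from $\epsilon$ to $\infty$). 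The circle contributes $O(\epsilon^{1-\alpha})\to0$, while on the two edges $\lambda^{-\alpha}=r^{-\alpha}e^{\pm i\alpha\pi}$ and $e^{\lambda\tau}=e^{-r\tau}$; letting $\epsilon\to0$,
$$\frac{1}{2\pi i}\int_\Gamma\lambda^{-\alpha}e^{\lambda\tau}\,d\lambda=\frac{e^{i\alpha\pi}-e^{-i\alpha\pi}}{2\pi i}\int_0^\infty r^{-\alpha}e^{-r\tau}\,dr=\frac{\sin(\alpha\pi)}{\pi}\,\Gamma(1-\alpha)\,\tau^{\alpha-1},$$
and the reflection formula $\Gamma(\alpha)\Gamma(1-\alpha)=\pi/\sin(\alpha\pi)$ gives the claim for $\alpha\in(0,1)$.

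Finally I would lift the result to every $\alpha\in\R^+$ using the composition rule $B^{-\alpha}B^{-\beta}=B^{-(\alpha+\beta)}$ recalled in the excerpt: picking $N\in\N$ with $\alpha/N<1$ we have $B^{-\alpha}=(B^{-\alpha/N})^N$, and composing the Riemann–Liouville operators amounts (by a Fubini argument, again justified by absolute convergence for continuous $f$ since $\alpha/N>0$) to convolving their kernels on $(0,\infty)$. Since $\int_0^t(t-\sigma)^{a-1}\sigma^{b-1}\,d\sigma=t^{a+b-1}\Gamma(a)\Gamma(b)/\Gamma(a+b)$ for $a,b>0$, an induction on $N$ shows the $N$-fold convolution of $t^{\alpha/N-1}/\Gamma(\alpha/N)$ equals $t^{\alpha-1}/\Gamma(\alpha)$, which is exactly the kernel in the statement. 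I expect the main obstacle to be the analytic bookkeeping of the middle step: verifying rigorously that $\Gamma$ may be deformed onto the cut without crossing it and that the large-radius arcs are negligible, together with the (routine but indispensable) uniform estimate that licenses all the Fubini interchanges; once these are in place, the manipulations with the Beta and reflection formulas are immediate.
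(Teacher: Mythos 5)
Your proof is correct and takes essentially the same route as the paper: the same collapse of $\Gamma$ onto the cut $(-\infty,0]$ followed by the reflection formula $\Gamma(\alpha)\Gamma(1-\alpha)=\pi/\sin(\pi\alpha)$ for $\alpha\in(0,1)$, and the same combination of the semigroup property $B^{-\alpha}B^{-\beta}=B^{-(\alpha+\beta)}$ with the Beta integral for general $\alpha$ (the paper splits $\alpha=n+\beta$ with $n\in\N$, $\beta\in(0,1)$ rather than writing $B^{-\alpha}=(B^{-\alpha/N})^N$, an immaterial difference). Your explicit estimates licensing the Fubini interchanges are a welcome supplement to the paper's terser presentation.
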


\begin{proof}
The formula is true if $\alpha = 1$. If $\alpha = n \in \N$, 
$$
B^{-n}f(t) = \frac{1}{(n-1)!} \int_0^t (t-s)^{n-1} f(s) ds = \frac{1}{\Gamma(n)} \int_0^t (t-s)^{n-1} f(s) ds. 
$$
We consider the case $\alpha \in (0, 1)$: we have
$$
B^{-\alpha}f(t)= \frac{1}{2\pi i} \int_0^t (\int_\Gamma \lambda^{-\alpha} e^{\lambda(t-s)} d\lambda) f(s) ds
$$
By Cauchy's theorem, if $r \in \R^+$, 
$$
\begin{array}{c}
 \frac{1}{2\pi i} \int_\Gamma \lambda^{-\alpha} e^{\lambda r} d\lambda = \lim_{\theta \to 0^+} \frac{1}{2\pi i} (\int_0^\infty (\rho e^{i\theta})^{-\alpha} e^{r\rho e^{i\theta}} e^{i\theta} d\rho - \int_0^\infty (\rho e^{-i\theta})^{-\alpha} e^{r\rho e^{-i\theta}} e^{-i\theta} d\rho )  \\ \\
 = \frac{1}{2\pi i} (- \int_0^\infty \rho^{-\alpha} e^{-i \alpha \pi}  e^{-r\rho} d\rho + \int_0^\infty \rho^{-\alpha} e^{i \alpha \pi}  e^{-r\rho} d\rho) \\ \\
 = \frac{\sin(\alpha \pi)}{\pi} \int_0^\infty e^{-r\rho} \rho^{-\alpha} d\rho = \frac{\sin(\alpha \pi) \Gamma (1-\alpha)}{\pi} r^{\alpha -1} 
 = \frac{r^{\alpha -1}}{\Gamma(\alpha)},
 \end{array}
$$
employing the well known formula
$$
\Gamma (z) \Gamma (1-z) = \frac{\pi}{\sin(\pi z)}.
$$
So the case $\alpha \in (0, 1)$ is proved. 

Finally, let $\alpha = n + \beta$, with $n \in \N$, $\beta \in (0, 1)$. We have
$$
\begin{array}{c}
B^{-\alpha}f(t) = B^{-\beta}(B^{-n}f)(t) = \frac{1}{\Gamma (n) \Gamma (\beta)} \int_0^t (t - \tau)^{\beta -1} (\int_0^\tau (\tau - s)^{n-1} f(s) ds) d\tau \\ \\
= \int_0^t \frac{1}{\Gamma (n) \Gamma (\beta)} (\int_s^t (t-\tau)^{\beta - 1} (\tau - s)^{n-1} d\tau ) f(s) ds
\end{array} 
$$
We have 
$$
\frac{1}{\Gamma (n) \Gamma (\beta)} \int_s^t (t-\tau)^{\beta - 1} (\tau - s)^{n-1} d\tau = \frac{B(\beta,n)}{\Gamma (n) \Gamma (\beta)} (t-s)^{n+\beta -1},
$$
where $B$ indicates Euler's Beta function. From the classical formula
$$
B(z,w) = \frac{\Gamma (z) \Gamma (w)}{\Gamma (z+w)}
$$
the conclusion follows. 

\end{proof}

It can be seen that, $\forall \alpha \in \R^+$, $B^{-\alpha}$ is injective. So we can define 
$$
B^\alpha := (B^{-\alpha})^{-1}. 
$$
By Proposition 2.3.1 in \cite{Ta1}, $B^\alpha$ is a closed operator, if $0 < \alpha < \beta$ $D(B^\beta) \subseteq D(B^\alpha)$ and $B^\alpha B^\beta = B^{\alpha + \beta}$.

\begin{remark}
{\rm Given $f \in C([0, T]; X)$, if $m \in \N_0$ and $m < \alpha < m+1$,  the Riemann-Liouville derivative of order $\alpha$ is usually defined (see, for example, \cite{Po1}, Chapter 2.3) as
\begin{equation}\label{eq6}
g:= D_t^{m+1} (\frac{1}{\Gamma(\alpha - m)} \int_0^t (t-s)^{m - \alpha} f(s) ds) = D_t^{m+1} (B^{\alpha - m-1}f)(t),
\end{equation}
in  case $B^{\alpha - m-1}f \in C^{m+1}([0, T]; X)$. If $0 < \alpha < 1$, the operator $B^\alpha$ coincides with the Riemann-Liouville derivative of order $\alpha$. In fact, let $B^{\alpha -1}f \in 
C^1([0, T]; X)$. As $B^{\alpha -1}f(0) = 0$, $B^{\alpha -1}f \in D(B)$. So

 $$g = BB^{\alpha -1}f, B^{\alpha -1}f = B^{-1}g, 
f = B^{1-\alpha} B^{-1}g = B^{1-\alpha}B^{\alpha-1}B^{-\alpha}g = B^{-\alpha}g.$$
We deduce $f \in D(B^\alpha)$ and
$$
B^\alpha f = g. 
$$

}
\end{remark}
If $\alpha, \beta \in \R^+$, we introduce the following function of Mittag-Leffler type, which is an entire (see \cite{Po1}, Chapter 1.2.1): 
$$
E_{\alpha,\beta}(z) = \sum_{k=0}^\infty \frac{z^k}{\Gamma(\alpha k + \beta)}. 
$$

\begin{proposition}
Let $\alpha \in \R^+$, Then $\rho(B^\alpha) = \C$. Moreover, $\forall \lambda \in \C$, $\forall t \in [0, T]$, 
\begin{equation}\label{eq7}
[(\lambda - B^\alpha)^{-1} f](t) = - \int_0^t E_{\alpha,\alpha} (\lambda(t-s)^\alpha) (t-s)^{\alpha -1} f(s) ds. 
\end{equation}
\end{proposition}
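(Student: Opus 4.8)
The strategy is to rewrite the resolvent equation $(\lambda-B^\alpha)u=f$ in terms of the \emph{bounded} operator $B^{-\alpha}$ and to exploit that $B^{-\alpha}$ is quasi-nilpotent, so that the relevant Neumann series converges for \emph{every} $\lambda\in\C$. First I would record a norm bound for the powers of $B^{-\alpha}$: using $B^{-k\alpha}=(B^{-\alpha})^k$ together with Proposition \ref{pr2}, for $f\in C([0,T];X)$ and $t\in[0,T]$ one gets
$$\|(B^{-\alpha})^kf(t)\|_X\le\frac{1}{\Gamma(k\alpha)}\int_0^t(t-s)^{k\alpha-1}ds\,\|f\|_{C([0,T];X)}\le\frac{T^{k\alpha}}{\Gamma(k\alpha+1)}\|f\|_{C([0,T];X)},$$
hence $\|(B^{-\alpha})^k\|_{\mL(C([0,T];X))}\le T^{k\alpha}/\Gamma(\alpha k+1)$. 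Since the Mittag-Leffler function is entire, $\sum_{k\ge0}|\lambda|^kT^{k\alpha}/\Gamma(\alpha k+1)=E_{\alpha,1}(|\lambda|T^\alpha)<\infty$; therefore, for every $\lambda\in\C$, the series $R_\lambda:=\sum_{k=0}^\infty\lambda^k(B^{-\alpha})^k$ converges in $\mL(C([0,T];X))$ and $R_\lambda(I-\lambda B^{-\alpha})=(I-\lambda B^{-\alpha})R_\lambda=I$. In particular $B^{-\alpha}$ has spectral radius $0$.

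Next I would prove $\lambda\in\rho(B^\alpha)$. Given $f$, set $v:=R_\lambda B^{-\alpha}f$; then $v=B^{-\alpha}f+\lambda B^{-\alpha}v=B^{-\alpha}(f+\lambda v)$, so $v\in R(B^{-\alpha})=D(B^\alpha)$ and $B^\alpha v=f+\lambda v$. Hence $u:=-v\in D(B^\alpha)$ satisfies $B^\alpha u=-f+\lambda u$, i.e. $(\lambda-B^\alpha)u=f$, so $\lambda-B^\alpha$ is surjective. It is injective as well: if $(\lambda-B^\alpha)u=0$ with $u\in D(B^\alpha)$, then $B^\alpha u=\lambda u$, whence $u=B^{-\alpha}B^\alpha u=\lambda B^{-\alpha}u$, i.e. $(I-\lambda B^{-\alpha})u=0$, and so $u=0$ by the previous step. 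Thus $\lambda-B^\alpha$ is a bijection of $D(B^\alpha)$ onto $C([0,T];X)$ with $(\lambda-B^\alpha)^{-1}=-R_\lambda B^{-\alpha}\in\mL(C([0,T];X))$; since $\lambda$ was arbitrary, $\rho(B^\alpha)=\C$.

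Finally I would identify the kernel. By Proposition \ref{pr2},
$$(\lambda-B^\alpha)^{-1}f(t)=-\sum_{k=0}^\infty\lambda^k(B^{-(k+1)\alpha}f)(t)=-\sum_{k=0}^\infty\frac{\lambda^k}{\Gamma((k+1)\alpha)}\int_0^t(t-s)^{(k+1)\alpha-1}f(s)ds.$$
For $0\le s\le t$ the partial sums of the $s$-integrand are dominated in $X$-norm by $(t-s)^{\alpha-1}E_{\alpha,\alpha}(|\lambda|(t-s)^\alpha)\|f\|_{C([0,T];X)}$, which is an integrable function of $s$ on $[0,t]$ (the Mittag-Leffler factor being bounded there and $\alpha-1>-1$); dominated convergence then permits exchanging the sum and the integral, giving
$$(\lambda-B^\alpha)^{-1}f(t)=-\int_0^t(t-s)^{\alpha-1}\Big(\sum_{k=0}^\infty\frac{(\lambda(t-s)^\alpha)^k}{\Gamma(\alpha k+\alpha)}\Big)f(s)ds=-\int_0^t E_{\alpha,\alpha}(\lambda(t-s)^\alpha)(t-s)^{\alpha-1}f(s)ds,$$
which is \eqref{eq7}. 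The only genuinely delicate point is this last interchange of summation and integration across the integrable singularity at $s=t$; everything else follows mechanically from the quasi-nilpotence of $B^{-\alpha}$ established in the first step.
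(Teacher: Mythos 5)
Your proposal is correct and follows essentially the same route as the paper: reduce the resolvent equation to inverting $I-\lambda B^{-\alpha}$, use the bound $\|B^{-k\alpha}\|\le T^{k\alpha}/\Gamma(\alpha k+1)$ (quasi-nilpotence) to sum the Neumann series for every $\lambda\in\C$, and then sum the explicit kernels of $B^{-(k+1)\alpha}$ from Proposition \ref{pr2} to obtain the Mittag-Leffler kernel. Your version merely adds some worthwhile bookkeeping the paper leaves implicit, namely the verification that the candidate solution lies in $D(B^\alpha)$ and the dominated-convergence justification of the sum--integral interchange near $s=t$.
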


\begin{proof}
We already know that $0 \in \rho(B^\alpha)$ and (\ref{eq7}) is true if $\lambda = 0$. Suppose $\lambda \neq 0$. Then the equation $(\lambda - B^\alpha)u = f$ ($f \in C([0, T]; X)$) is equivalent to
\begin{equation}\label{eq8}
u = \lambda B^{-\alpha}u - B^{-\alpha}f. 
\end{equation}
The linear operator $\lambda B^{-\alpha}$ has spectral radius $0$. In fact, 
$$
\|(\lambda B^{-\alpha})^k\|_{\mL(C([0, T]; X))}^{1/k} = |\lambda| \| B^{-\alpha k}\|_{\mL(C([0, T]; X))}^{1/k} \leq \frac{|\lambda| T^\alpha }{\Gamma(\alpha k + 1)^{1/k}} \to 0 \quad (k \to \infty),
$$
because the power series $\sum_{k=0}^\infty \frac{z^k}{\Gamma(\alpha k + 1)}$ has convergence radius $\infty$. So equation (\ref{eq8}) has the unique solution
$$
u = - \sum_{k=0}^\infty \lambda^k B^{-\alpha k - \alpha} f = - \sum_{k=0}^\infty  \frac{\lambda^k}{\Gamma(\alpha k + \alpha)} \int_0^t (t-s)^{\alpha k + \alpha -1} f(s) ds = - \int_0^t E_{\alpha,\alpha}(\lambda (t-s)^\alpha)) (t-s)^{\alpha -1} f(s) ds. 
$$
\end{proof}

Let  $\theta \in (0, \pi)$, $r \in \R^+$. We indicate with $\Gamma(\theta,r)$ a generic piecewise regular, simple contour describing 
\begin{equation}
\begin{array}{c}
\{\mu \in \C \setminus (-\infty, 0] : |Arg(\mu)| = \theta, |\mu| \geq r\} \cup \{\mu \in \C \setminus (-\infty, 0] : |Arg(\mu)| \leq \theta, |\mu| = r\}.
\end{array}
\end{equation}

\begin{proposition}\label{pr5} Let $\alpha \in (0, 2)$. Then:

(I) let $\lambda \in \C$. Let $\Gamma(\lambda) = \Gamma(\theta,r)$, for some $\theta \in (\frac{\pi}{2}, \pi)$ with $\alpha \theta < \pi$, $r^\alpha > |\lambda|$.  If $f \in C([0, T]; X)$, 
\begin{equation}\label{eq9}
(\lambda - B^\alpha)^{-1}f(t) = - \frac{1}{2\pi i} \int_{\Gamma(\lambda)} (\lambda - \mu^\alpha)^{-1} (\mu - B)^{-1} f d\mu = \frac{1}{2\pi i} 
 \int_0^t ( \int_{\Gamma(\lambda)} \frac{e^{\mu (t-s)}}{\lambda - \mu^\alpha} d\mu) f(s) ds. 
\end{equation}

(II) $B^\alpha$ is of type $\frac{\alpha \pi}{2}$. 
\end{proposition}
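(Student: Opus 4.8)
My plan is to deduce (I) from a scalar kernel identity, and (II) from the representation (\ref{eq7}) together with the classical asymptotics of the Mittag--Leffler function; part (I) is essentially bookkeeping, and the one genuinely delicate step is an integrability issue in (II).

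For (I), I would first produce the second equality in (\ref{eq9}) by Fubini: evaluating the middle expression at $t$ and inserting $(\mu-B)^{-1}f(t)=-\int_0^t e^{\mu(t-s)}f(s)\,ds$ from (\ref{eq2}) yields $\frac{1}{2\pi i}\int_0^t\big(\int_{\Gamma(\lambda)}\frac{e^{\mu(t-s)}}{\lambda-\mu^\alpha}\,d\mu\big)f(s)\,ds$; this is legitimate because on $\Gamma(\lambda)$ one has $|\mu|\ge r$, so $|\lambda-\mu^\alpha|\ge|\mu|^\alpha-|\lambda|$ stays away from $0$ and grows like $|\mu|^\alpha$, while $\int_0^t e^{Re(\mu)(t-s)}\,ds\le|Re(\mu)|^{-1}=(|\mu||\cos\theta|)^{-1}$ on the two rays (where $Re(\mu)<0$ since $\theta>\frac\pi2$), making the double integral absolutely convergent. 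It then remains to show that for $\sigma>0$,
$$\frac{1}{2\pi i}\int_{\Gamma(\lambda)}\frac{e^{\mu\sigma}}{\lambda-\mu^\alpha}\,d\mu=-\,E_{\alpha,\alpha}(\lambda\sigma^\alpha)\,\sigma^{\alpha-1}.$$
To get this I would expand $\frac{1}{\lambda-\mu^\alpha}=-\sum_{k=0}^\infty\lambda^k\mu^{-\alpha(k+1)}$, which converges uniformly on $\Gamma(\lambda)$ since $|\lambda|/|\mu|^\alpha\le|\lambda|/r^\alpha<1$ there, and integrate term by term; by Cauchy's theorem each $\frac{1}{2\pi i}\int_{\Gamma(\lambda)}\mu^{-\beta}e^{\mu\sigma}\,d\mu$ equals the same integral over the contour $\Gamma$ of (\ref{eq4}), namely $\sigma^{\beta-1}/\Gamma(\beta)$ for every $\beta>0$ (the computation in the proof of Proposition \ref{pr2}, valid for all $\beta>0$ by the identical deformation). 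Taking $\beta=\alpha(k+1)$ sums the series to $-\sigma^{\alpha-1}\sum_k(\lambda\sigma^\alpha)^k/\Gamma(\alpha k+\alpha)=-\sigma^{\alpha-1}E_{\alpha,\alpha}(\lambda\sigma^\alpha)$. Plugging this into (\ref{eq9}) gives $-\int_0^t E_{\alpha,\alpha}(\lambda(t-s)^\alpha)(t-s)^{\alpha-1}f(s)\,ds=(\lambda-B^\alpha)^{-1}f(t)$ by (\ref{eq7}), which is the first equality; independence of the admissible choice of $(\theta,r)$ follows from Cauchy's theorem, since between two such contours one has $|\mu^\alpha|>|\lambda|$ and $|Arg(\mu)|<\pi$, so the integrand is regular.

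For (II), recall that $\rho(B^\alpha)=\C$ (the Proposition establishing (\ref{eq7})), so the containment requirement in Definition \ref{de1} is automatic for $\omega=\frac{\alpha\pi}2$, which lies in $(0,\pi)$ exactly because $\alpha\in(0,2)$; only the resolvent estimate is at issue, and it suffices to prove it for small $\ve>0$, the larger $\ve$ being a fortiori. Since $(\lambda-B^\alpha)^{-1}$ is convolution by the scalar kernel $-E_{\alpha,\alpha}(\lambda\sigma^\alpha)\sigma^{\alpha-1}$ (formula (\ref{eq7})),
$$\|\lambda(\lambda-B^\alpha)^{-1}\|_{\mL(C([0,T];X))}\le|\lambda|\int_0^T|E_{\alpha,\alpha}(\lambda\sigma^\alpha)|\,\sigma^{\alpha-1}\,d\sigma,$$
and the substitution $u=|\lambda|\sigma^\alpha$ (so $\sigma^{\alpha-1}\,d\sigma=\frac{du}{\alpha|\lambda|}$ and $\lambda\sigma^\alpha=\frac{\lambda}{|\lambda|}u$) bounds the right-hand side by $\frac1\alpha\int_0^\infty|E_{\alpha,\alpha}(wu)|\,du$, where $w=\lambda/|\lambda|$ satisfies $|Arg(w)|\ge\frac{\alpha\pi}2+\ve$. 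So everything reduces to the uniform bound $\int_0^\infty|E_{\alpha,\alpha}(wu)|\,du\le C(\ve)$ over such unit $w$. This last bound is the crux, and it is here that I expect the main difficulty: $E_{\alpha,\alpha}$ is entire, hence bounded on $\{|z|\le1\}$, which handles $\int_0^1$; for $|z|\to\infty$ in the sector $\frac{\alpha\pi}2+\ve\le|Arg(z)|\le\pi$ (admissible as $0<\alpha<2$ makes $\frac{\alpha\pi}2+\ve<\min\{\pi,\alpha\pi\}$ for small $\ve$) one has, uniformly in $Arg(z)$, the classical expansion $E_{\alpha,\alpha}(z)=-\sum_{k=1}^{N}z^{-k}/\Gamma(\alpha-\alpha k)+O(|z|^{-N-1})$ (see \cite{Po1}, Chapter 1.2); its $k=1$ term is $-z^{-1}/\Gamma(0)=0$, so in fact $E_{\alpha,\alpha}(z)=O(|z|^{-2})$ there. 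Hence $|E_{\alpha,\alpha}(wu)|\le C(\ve)(1+u)^{-2}$ for all $u\ge0$ and all admissible $w$, giving $\int_0^\infty|E_{\alpha,\alpha}(wu)|\,du\le C(\ve)$ and $\|\lambda(\lambda-B^\alpha)^{-1}\|_{\mL(C([0,T];X))}\le C(\ve)/\alpha=:M(\ve)$ whenever $|Arg(\lambda)|\ge\frac{\alpha\pi}2+\ve$. The delicate point is precisely the vanishing of the leading $|z|^{-1}$ coefficient: without it the integral would diverge logarithmically, and a crude estimate of (\ref{eq9}) via $\|(\mu-B)^{-1}\|$ along $\Gamma(\lambda)$ meets exactly this obstruction on the portion of the contour with $Re(\mu)\ge0$, which the Mittag--Leffler representation circumvents. (Alternatively, (II) is a case of the general fact that a fractional power $B^\alpha$ of an operator of type $\omega$ with $\alpha\omega<\pi$ is of type $\alpha\omega$, cf.\ \cite{Ta1}.)
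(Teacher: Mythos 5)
Your proposal is correct, but it takes a genuinely different route from the paper in both parts. For (I) the paper never computes the scalar kernel: it sets $T(\lambda):=-\frac{1}{2\pi i}\int_{\Gamma(\lambda)}(\lambda-\mu^\alpha)^{-1}(\mu-B)^{-1}d\mu$, notes that $T$ and $R(\lambda)=(\lambda-B^\alpha)^{-1}$ are both holomorphic near $0$, and identifies them by matching all derivatives at the origin, $T^{(k)}(0)=-k!\,B^{-(k+1)\alpha}=R^{(k)}(0)$, the left identity coming from Cauchy's formula and the definition (\ref{eq4}); the Mittag--Leffler identity you prove directly is only extracted afterwards, in the remark following the proposition, by comparing (\ref{eq7}) with (\ref{eq9}). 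Your geometric-series computation is a legitimate alternative (domination for the term-by-term integration is available since $\sum_k|\lambda|^k|\mu|^{-\alpha(k+1)}\le C|\mu|^{-\alpha}$ on $\Gamma(\lambda)$ and $e^{\mu\sigma}$ decays on the rays); the one imprecision is your appeal to the ``identical deformation'' of Proposition \ref{pr2} for $\frac{1}{2\pi i}\int\mu^{-\beta}e^{\mu\sigma}d\mu=\sigma^{\beta-1}/\Gamma(\beta)$ with $\beta=\alpha(k+1)\ge 1$: collapsing onto the cut produces a divergent integral at $0$ in that range, and the correct justification is simply that $\Gamma(\lambda)$ is already a Hankel contour, for which this is Hankel's classical representation of $1/\Gamma(\beta)$ for every $\beta$. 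For (II) the paper argues by homogeneity, taking $\Gamma(\rho e^{i\phi})=\rho^{1/\alpha}\Gamma$ and rescaling to pull out the factor $\rho^{-1}$ via the bound $\|(\rho^{1/\alpha}\mu-B)^{-1}\|\le C(\rho^{1/\alpha}|\mu|)^{-1}$, whereas you bound the $L^1$ norm of the Mittag--Leffler kernel using the uniform asymptotic expansion in the sector $|Arg(z)|\ge\frac{\alpha\pi}{2}+\varepsilon$ and the cancellation $1/\Gamma(0)=0$, giving $E_{\alpha,\alpha}(z)=O(|z|^{-2})$ there. Your closing observation is on target and is what your approach buys: a crude estimate of the contour integral does meet a genuine obstruction on the portion of $\Gamma(\lambda)$ with $Re(\mu)\ge 0$, where (\ref{eq3}) does not give $\|(\nu-B)^{-1}\|\le C/|\nu|$, so the kernel-based proof of (II) is arguably the more robust of the two. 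Both parts of your argument stand.
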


\begin{proof}
We set
$$
T(\lambda):= - \frac{1}{2\pi i} \int_{\Gamma(\lambda)} (\lambda - \mu^\alpha)^{-1} (\mu - B)^{-1} d\mu. 
$$
It is easily seen (observing that the integral does not depend on the specific choice of $\Gamma(\lambda)$ and can be chosen locally independently of $\lambda$) that $T$, with domain $\{\lambda \in \C \setminus \{0\} : |Arg(\lambda)| > \frac{\alpha \pi}{2}\}$  is entire with values in $\mL(C([0, T]; X)$. 

 By well known facts of analytic continuation, in order to show that $T(\lambda) = (\lambda - B^\alpha)^{-1}$, it is sufficient to show that this holds if $\lambda$ belongs to some ball centred in $0$. We set
$R(\lambda):= (\lambda - B^\alpha)^{-1}$. We prove that $T^{(k)}(0) = R^{(k)}(0)$ for every $k \in \N_0$. In fact, we have
$$
R^{(k)}(0) = - k! B^{-(k+1)\alpha}.
$$
On the other hand, taking $\Gamma(\lambda) = \Gamma$, 
$$
T^{(k)}(\lambda) = \frac{(-1)^{k+1} k!}{2\pi i} \int_\Gamma (\lambda - \mu^\alpha)^{-k-1} (\mu - B)^{-1} d\mu, 
$$
so that 
$$
T^{(k)}(0) = \frac{ k!}{2\pi i} \int_\Gamma  \mu^{-\alpha(k+1)} (\mu - B)^{-1} d\mu = - k! B^{-(k+1)\alpha},
$$
and the conclusion follows. 

We show that, if $\phi \in (\frac{\alpha \pi}{2}, \pi]$, there exists $C(\phi)$ in $\R^+$ such that, $\forall \rho \in \R^+$, 
$$
\|T(\rho e^{i\phi})\|_{\mL(C([0, T]; X)} \leq \frac{C(\phi)}{\rho}. 
$$
Let $\Gamma = \Gamma(e^{i\phi})$. Then we can take $\Gamma(\rho e^{i\phi}) = \rho^{1/\alpha} \Gamma$. So we have 
$$
\begin{array}{c}
\|T(\rho e^{i\phi})\|_{\mL(C([0, T]; X)} =  \| \frac{1}{2\pi i} \int_{ \rho^{1/\alpha} \Gamma} (\rho e^{i\theta} - \mu^\alpha)^{-1} (\mu - B)^{-1} d\mu\|_{\mL(C([0, T]; X))} \\ \\
= \| \frac{1}{2\pi i} \rho^{1/\alpha -1}  \int_{ \Gamma} (e^{i\theta} - \mu^\alpha)^{-1} (\rho^{1/\alpha} \mu - B)^{-1} d\mu \|_{\mL(C([0, T]; X))} \\ \\
\leq C_1 \rho^{-1} \int_{ \Gamma} |e^{i\theta} - \mu^\alpha|^{-1} |\mu|^{-1} d|\mu| = C(\theta) \rho^{-1}. 
\end{array}
$$
\end{proof}

\begin{remark}
{\rm Comparing (\ref{eq7}) with (\ref{eq9}), we deduce, for $\lambda \in \C$,  $t \in \R^+$, 
$$
   \frac{1}{2\pi i} \int_{\Gamma(\lambda)} \frac{e^{\mu t}}{\mu^\alpha - \lambda} d\mu =  E_{\alpha,\alpha}(\lambda t^\alpha) t^{\alpha - 1}. 
$$

}
\end{remark}

The following proposition formally justifies the use of the Laplace transforms with fractional derivatives:

\begin{proposition} \label{pr7A} Let $\theta_0 \in (\frac{\pi}{2}, \pi)$, $R \in \R^+$, $\alpha \in [0, \infty)$. Let $F : \{\lambda \in \C : |\lambda| > R, |Arg(\lambda)| < \theta_0\} \to X$ be such that: 

(a) $F$ is holomorphic; 

(b) there exists $M \in \R^+$ such that $\|F(\lambda)\| \leq M |\lambda|^{-1-\alpha}$, if $\lambda \in \C$,  $|\lambda| > R$, $|Arg(\lambda)| < \theta_0$; 

(c) for some $F_0 \in X$, ${\displaystyle \lim_{|\lambda| \to \infty}} \lambda^{1+\alpha} F(\lambda) = F_0$. 

Let $\gamma$ be a piecewise regular contour, describing, for some $\theta_1 \in (\frac{\pi}{2}, \theta_0)$, $R_1 > R$,
 $$\{\lambda \in \C : |\lambda| \geq R_1, |Arg(\lambda)| = \theta_1\} \cup \{\lambda \in \C : |\lambda| = R_1, |Arg(\lambda)| \leq \theta_1 \}, $$
 oriented from $\infty e^{-i \theta_1}$ to $\infty e^{i \theta_1}$. We set, for $t \in (0, T]$, 
 $$
 u(t) = \frac{1}{2\pi i} \int_\gamma e^{\lambda t} F(\lambda) d\lambda. 
 $$
 Then $u \in D(B^\alpha)$, for $t \in (0, T]$, 
 $$
 B^\alpha u(t) = \frac{1}{2\pi i} \int_\gamma e^{\lambda t} \lambda^{\alpha} F(\lambda) d\lambda
 $$
 and
 $$
 B^\alpha u(0) = F_0. 
 $$
\end{proposition}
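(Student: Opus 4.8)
The plan is to reduce the entire statement to the single identity $u=B^{-\alpha}g$ in $C([0,T];X)$, where
\[
g(t):=\frac{1}{2\pi i}\int_\gamma e^{\lambda t}\lambda^\alpha F(\lambda)\,d\lambda,\qquad t\in(0,T].
\]
If $\alpha=0$ then $B^\alpha=I$ and only the initial-value claim $u(0)=F_0$ is at stake, so assume $\alpha>0$. Since $B^\alpha=(B^{-\alpha})^{-1}$ with $B^{-\alpha}$ injective, one has $D(B^\alpha)=B^{-\alpha}(C([0,T];X))$ and $B^\alpha B^{-\alpha}=I$, so it suffices to prove two claims: (2) $g$ extends to an element of $C([0,T];X)$ with $g(0)=F_0$; and (1) for $t\in(0,T]$, $u(t)=\frac{1}{\Gamma(\alpha)}\int_0^t(t-s)^{\alpha-1}g(s)\,ds$, which is $B^{-\alpha}g(t)$ by Proposition~\ref{pr2}. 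Indeed, by (b) the integral $\int_\gamma e^{\lambda t}F(\lambda)\,d\lambda$ converges absolutely and uniformly for $t\in[0,T]$ (dominated by $M|\lambda|^{-1-\alpha}$ on the rays, bounded on the compact circular part), so $u$ extends continuously to $[0,T]$, and the same $O(|\lambda|^{-1-\alpha})$ bound lets one collapse $\gamma$ to infinity through the short circular arc inside the sector to get $u(0)=\frac{1}{2\pi i}\int_\gamma F\,d\lambda=0$; likewise $g$ is absolutely convergent and continuous on $(0,T]$ because $e^{\lambda t}$ decays exponentially on the rays $|\mathrm{Arg}\,\lambda|=\theta_1>\pi/2$ of $\gamma$. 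Granting (1)--(2), $u$ and $B^{-\alpha}g$ are continuous on $[0,T]$ and agree on $(0,T]$, hence $u=B^{-\alpha}g\in D(B^\alpha)$, $B^\alpha u=g$, and in particular $B^\alpha u(0)=g(0)=F_0$.

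For (2) I would first record the Hankel-type identity $\frac{1}{2\pi i}\int_\gamma e^{\lambda t}\lambda^{-1}\,d\lambda=1$ for $t>0$: close $\gamma$ by the large arc $|\lambda|=\rho$ lying in $\{\mathrm{Re}\,\lambda<0\}$, on which $e^{\lambda t}/\lambda=O(\rho^{-1}e^{t\rho\cos\theta_1})$, and let $\rho\to\infty$, picking up the residue $1$ at $0$. Writing $\lambda^\alpha F(\lambda)=\lambda^{-1}F_0+\lambda^{-1}\psi(\lambda)$ with $\psi(\lambda):=\lambda^{1+\alpha}F(\lambda)-F_0$, hypotheses (b)--(c) give $\|\psi(\lambda)\|\le M+\|F_0\|$ and $\psi(\lambda)\to0$ as $|\lambda|\to\infty$, hence $g(t)-F_0=\frac{1}{2\pi i}\int_\gamma e^{\lambda t}\lambda^{-1}\psi(\lambda)\,d\lambda$. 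The substitution $\mu=\lambda t$ turns the right-hand side into $\frac{1}{2\pi i}\int_{t\gamma}e^{\mu}\mu^{-1}\psi(\mu/t)\,d\mu$, and since $\mu\mapsto e^{\mu}\mu^{-1}\psi(\mu/t)$ is holomorphic for $|\mu|>Rt$, $|\mathrm{Arg}\,\mu|<\theta_0$, once $R_1t<1$ one deforms $t\gamma$ to a fixed contour $\Gamma_1$ of the same shape with unit circular part (valid since $Rt<R_1t<1$). On $\Gamma_1$ the integrand is dominated by the integrable, $t$-independent function $(M+\|F_0\|)\,|e^\mu|/|\mu|$, while for each fixed $\mu\in\Gamma_1$ one has $|\mu/t|\to\infty$ with bounded argument, so $\psi(\mu/t)\to0$ as $t\to0^+$; dominated convergence gives $g(t)-F_0\to0$. (The same argument with $\alpha=0$, $g=u$ yields $u(0)=F_0$ in the degenerate case.)

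For (1) the computation to perform is, for $\lambda\notin(-\infty,0]$,
\[
\frac{1}{\Gamma(\alpha)}\int_0^t(t-s)^{\alpha-1}e^{\lambda s}\,ds=\frac{e^{\lambda t}\lambda^{-\alpha}}{\Gamma(\alpha)}\int_0^{\lambda t}w^{\alpha-1}e^{-w}\,dw=e^{\lambda t}\lambda^{-\alpha}-\frac{\lambda^{-\alpha}}{\Gamma(\alpha)}\int_0^\infty(\lambda t+r)^{\alpha-1}e^{-r}\,dr,
\]
obtained by $r=t-s$, then $w=\lambda r$, then $\int_0^{\lambda t}=\Gamma(\alpha)-\int_{\lambda t}^\infty$ with $w=\lambda t+r$ on the last integral. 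Set $h(\lambda):=-\frac{1}{\Gamma(\alpha)}F(\lambda)\int_0^\infty(\lambda t+r)^{\alpha-1}e^{-r}\,dr$; on the rays of $\gamma$, $|\lambda t+r|\ge|\lambda|t\sin\theta_1$, whence $\int_0^\infty|\lambda t+r|^{\alpha-1}e^{-r}\,dr=O(|\lambda|^{\alpha-1})$ (treating $\alpha\le1$ and $\alpha>1$ separately), so $\|h(\lambda)\|=O(|\lambda|^{-2})$ on $\gamma$ by (b). As $h$ is holomorphic on $\{|\lambda|>R,\ |\mathrm{Arg}\,\lambda|<\theta_0\}$, closing $\gamma$ at radius $\rho$ by the short arc in the sector and letting $\rho\to\infty$ gives $\int_\gamma h\,d\lambda=0$, whence
\[
u(t)=\frac{1}{2\pi i}\int_\gamma e^{\lambda t}F(\lambda)\,d\lambda=\frac{1}{2\pi i}\int_\gamma\Bigl(\frac{1}{\Gamma(\alpha)}\int_0^t(t-s)^{\alpha-1}e^{\lambda s}\,ds\Bigr)\lambda^\alpha F(\lambda)\,d\lambda.
\]
It remains to interchange the two integrals: truncate $\gamma$ to $\gamma_\rho$, where Fubini is immediate; note that the outer integrand equals $e^{\lambda t}F(\lambda)+h(\lambda)$, hence is integrable on $\gamma$, so $\int_{\gamma_\rho}\to\int_\gamma$; and pass $\rho\to\infty$ inside the $s$-integral by dominated convergence, using that $\frac{1}{2\pi i}\int_{\gamma_\rho}e^{\lambda s}\lambda^\alpha F(\lambda)\,d\lambda\to g(s)$ for $s>0$ and is bounded by a multiple of $1+|\log s|$, which is $(t-s)^{\alpha-1}$-integrable on $(0,t)$. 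This yields $u(t)=\frac{1}{\Gamma(\alpha)}\int_0^t(t-s)^{\alpha-1}g(s)\,ds$, i.e.\ (1).

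The main obstacle is the bookkeeping in (1): one has to extract, via the incomplete-gamma identity above, the cancellation that upgrades the merely $O(|\lambda|^{-1})$ decay of $\bigl(\tfrac{1}{\Gamma(\alpha)}\int_0^t(t-s)^{\alpha-1}e^{\lambda s}\,ds\bigr)\lambda^\alpha F(\lambda)$ to the $O(|\lambda|^{-2})$ decay of the correction term $h$, which is exactly what makes both $\int_\gamma h\,d\lambda=0$ and the Fubini interchange legitimate. In (2) the only delicate point is that the domination on $\Gamma_1$ must come from hypothesis (c) (pointwise vanishing of $\psi$), not from (b) alone, since $|e^\mu|/|\mu|$ is integrable on $\Gamma_1$ whereas $\int_\gamma|\lambda|^{-1}\,|d\lambda|$ diverges.
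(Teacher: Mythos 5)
Your proposal is correct, and its overall architecture is the same as the paper's: both reduce everything to the single identity $u=B^{-\alpha}g$ with $g(t)=\frac{1}{2\pi i}\int_\gamma e^{\lambda t}\lambda^\alpha F(\lambda)\,d\lambda$, and both obtain $g\in C([0,T];X)$, $g(0)=F_0$ by the same device (rescale the contour by $t$, peel off $F_0$ via $\frac{1}{2\pi i}\int_\gamma e^{\mu}\mu^{-1}d\mu=1$, and apply dominated convergence using hypothesis (c) for the pointwise vanishing and (b) for the domination). Where you genuinely diverge is in proving $u=B^{-\alpha}g$ on $(0,T]$: the paper stays in the operational calculus, writing $B^{-\alpha}g=-\frac{1}{2\pi i}\int_\Gamma\lambda^{-\alpha}(\lambda-B)^{-1}g\,d\lambda$, inserting the contour representation of $(\lambda-B)^{-1}g$, exchanging the two Dunford integrals, and evaluating $\int_\Gamma\lambda^{-\alpha}(\lambda-\mu)^{-1}d\lambda$ by residues; you instead use the concrete Riemann--Liouville kernel of Proposition \ref{pr2} and the incomplete-gamma identity for $\frac{1}{\Gamma(\alpha)}\int_0^t(t-s)^{\alpha-1}e^{\lambda s}ds$, isolating the correction term $h(\lambda)$ with $O(|\lambda|^{-2})$ decay whose contour integral vanishes. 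Your route is more elementary (no second Dunford integral, no resolvent identity) and has the merit of making explicit the Fubini justification that the paper passes over in silence -- namely that the naive interchange fails because $\lambda^\alpha F(\lambda)$ is only $O(|\lambda|^{-1})$ on $\gamma$, and that the truncation-plus-logarithmic-domination argument (with the bound $\|g_\rho(s)\|\le C(1+|\log s|)$) is what saves it; the paper's route, in exchange, is shorter and generalizes immediately to operators $B$ other than the concrete Volterra one. Both are valid proofs.
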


\begin{proof}
We begin by considering the case $\alpha = 0$. It is clear that $u \in C((0, T]; X)$. We show that
$$
{\displaystyle \lim_{t \to 0}} u(t) = F_0. 
$$
By standard properties of holomorphic functions, we have, for $t \in (0, \min\{1, T\}]$, 
$$
\begin{array}{c}
u(t) = \frac{1}{2\pi i} \int_{t^{-1}\gamma} e^{\lambda t} F(\lambda) d\lambda 
= \frac{1}{2\pi i} \int_{\gamma} \frac{e^{\lambda}}{\lambda} t^{-1} \lambda F(t^{-1} \lambda) d\lambda \\ \\
= F_0 + \frac{1}{2\pi i} \int_{\gamma} \frac{e^{\lambda}}{\lambda} [ t^{-1} \lambda F(t^{-1} \lambda) - F_0] d\lambda
\end{array}
$$
and the second summand vanishes as $t \to 0$, by the dominated convergence theorem. 

Suppose now that $\alpha > 0$. We set, for $t \in (0, T]$, 
$$
f(t) = \frac{1}{2\pi i} \int_\gamma e^{\lambda t} \lambda^{\alpha} F(\lambda) d\lambda .
$$
Then, employing what we have seen in case $\alpha = 0$, we deduce $f \in C([0, T]; X)$ and $f(0) = F_0$. We check that
$$
B^{-\alpha}f(t)= u(t). 
$$
We fix $\Gamma$  piecewise regular contour, describing, for some $\theta_2 \in (\frac{\pi}{2}, \theta_1)$, $R_2 > R_1$,
 $$\{\lambda \in \C : |\lambda| \geq R_2, |Arg(\lambda)| = \theta_2\} \cup \{\lambda \in \C : |\lambda| = R_2,  |Arg(\lambda)| \leq \theta_2 \}, $$
 oriented from $\infty e^{-i \theta_2}$ to $\infty e^{i \theta_2}$. If $\lambda \in \C \setminus \gamma$, we have 
 $$
 (\lambda - B)^{-1}f(t) = \frac{1}{2\pi i} \int_\gamma e^{\mu t}(\lambda - \mu)^{-1} \mu^{\alpha} F(\mu) d\mu .
 $$
 So, by (\ref{eq4}), 
 $$
 \begin{array}{c}
 B^{-\alpha} f(t) = - \frac{1}{(2\pi i)^2} \int_\Gamma \lambda^{-\alpha} (\int_\gamma \frac{e^{\mu t}}{\lambda - \mu} \mu^{\alpha} F(\mu) d\mu) d\lambda \\ \\
 = - \frac{1}{(2\pi i)^2} \int_\gamma (\int_\Gamma \lambda^{-\alpha} (\lambda - \mu)^{-1} d\lambda) e^{\mu t} \mu^{\alpha} F(\mu) d\mu \\ \\
 = \frac{1}{2\pi i} \int_\gamma e^{\lambda t}  F(\lambda) d\lambda = u(t). 
 \end{array}
 $$

\end{proof}

\begin{remark}\label{re9}
{\rm From the inversion formula of the Laplace transform, we have, it $\delta \in \R^+$, $t \geq 0$, 
$$
t^\delta = \frac{1}{2\pi i} \int_\gamma e^{\lambda t} \Gamma(\delta + 1) \lambda^{-\delta - 1} d\lambda. 
$$
We deduce that, if $\delta \geq \alpha > 0$, $u(t) = t^\delta$ belongs to $D(B^\alpha)$ and
$$
B^\alpha u(t) = \frac{1}{2\pi i} \int_\gamma e^{\lambda t} \Gamma(\delta + 1) \lambda^{\alpha -\delta - 1} d\lambda = \frac{\Gamma (\delta +1)}{\Gamma (\delta + 1 -\alpha)} t^{\delta -\alpha}. 
$$ }
\end{remark}

We pass to consider the domain $D(B^\alpha)$ of $B^\alpha$, with $\alpha > 0$. We introduce the following function spaces, for $m \in \N_0$, $\beta \in (0, 1)$: 
\begin{equation}
B^{m+\beta}_{\infty,1}([0, T]; X) := \{f \in C^m([0, T]; X) : \int_0^T h^{-\beta-1} \|f^{(m)}(. + h) - f^{(m)}\|_{C([0, T-h]; X)} dh < \infty\}, 
\end{equation}
\begin{equation}
C^{m+\beta}([0, T]; X) := \{f \in C^m([0, T]; X) : sup_{0 < h < T} h^{-\beta} \|f^{(m)}(. + h) - f^{(m)}\|_{C([0, T-h]; X)} < \infty\}. 
\end{equation}
$C^{\beta}([0, T]; X)$ is the classical space of H\"older continuous functions, $B^{m+\beta}_{\infty,1}([0, T]; X)$ is a particular type of Besov space. It is known that, if $0 < \beta < 1$ and $\epsilon > 0$, 
$$
C^{\beta+\epsilon}([0, T]; X) \hookrightarrow B^{\beta}_{\infty,1}([0, T]; X) \hookrightarrow C^{\beta}([0, T]; X). 
$$
We set also, again for $m \in \N$, $\beta \in (0, 1)$:
$$
{\stackrel{o}{B}}^{m+\beta}_{\infty,1}([0, T]; X):= \{f \in B^{m+\beta}_{\infty,1}([0, T]; X) : f^{(k)}(0) = 0, k \in \N_0,  k \leq m\},
$$
for $m \in \N$, $\beta \in (0, 1]$, 
\begin{equation}
{\stackrel{o}{C}}^{m+\beta}([0, T]; X) := \{f \in C^{m+\beta}([0, T]; X): f^{(k)}(0) = 0, k \in \N_0, k \leq m\}. 
\end{equation}
Observe that
$$
{\stackrel{o}{C}}^{m+1}([0, T]; X) = D(B^{m+1}). 
$$

\begin{proposition}\label{pr7}
Let $\alpha \in \R^+ \setminus \N$. Then
$$
{\stackrel{o}{B}}^{\alpha}_{\infty,1}([0, T]; X)  \subseteq D(B^\alpha) \subseteq {\stackrel{o}{C}}^{\alpha}([0, T]; X).
$$
\end{proposition}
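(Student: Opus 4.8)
Write $\alpha=m+\beta$ with $m\in\N_0$ and $\beta\in(0,1)$. Since $B^{-\alpha}$ is injective and $B^\alpha=(B^{-\alpha})^{-1}$, we have $f\in D(B^\alpha)$ exactly when $f$ lies in the range of $B^{-\alpha}$, i.e. $f=B^{-\alpha}g$ for some $g\in C([0,T];X)$ (and then $g=B^\alpha f$); by Proposition \ref{pr2} this just says that $f$ is the Riemann--Liouville fractional integral of order $\alpha$ of a continuous function. The first move is to reduce to $m=0$. If $f\in{\stackrel{o}{B}}^{\alpha}_{\infty,1}$, then $f\in C^m$ with $f^{(k)}(0)=0$ for $k\le m$, so Taylor's formula with integral remainder together with Proposition \ref{pr2} gives $f=B^{-m}f^{(m)}$, while $f^{(m)}\in{\stackrel{o}{B}}^{\beta}_{\infty,1}$ is immediate from the definitions. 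Conversely, if $f=B^{-\alpha}g=B^{-m}(B^{-\beta}g)$ with $g\in C([0,T];X)$, then $h:=B^{-\beta}g$ is continuous with $h(0)=0$, so $B^{-m}h$ (an $m$-fold primitive of $h$, by Proposition \ref{pr2}) lies in $C^m$, its derivatives up to order $m-1$ vanish at $0$, and $(B^{-m}h)^{(m)}=h$. Hence the whole statement follows from the two facts, valid for $\beta\in(0,1)$: $B^{-\beta}\big(C([0,T];X)\big)\subseteq{\stackrel{o}{C}}^{\beta}([0,T];X)$, and ${\stackrel{o}{B}}^{\beta}_{\infty,1}([0,T];X)\subseteq D(B^\beta)$.

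For the first of these, let $f=B^{-\beta}g$ with $M:=\|g\|_{C([0,T];X)}$; then $f$ is continuous and $f(0)=0$. For $0\le t<t'\le T$ I would write
$$\Gamma(\beta)\big(f(t')-f(t)\big)=\int_t^{t'}(t'-s)^{\beta-1}g(s)\,ds+\int_0^t\big[(t'-s)^{\beta-1}-(t-s)^{\beta-1}\big]g(s)\,ds$$
and bound the two summands by $\frac{M}{\beta}(t'-t)^\beta$ and by $\frac{M}{\beta}\big((t')^\beta-t^\beta+(t'-t)^\beta\big)\le\frac{M}{\beta}(t'-t)^\beta$ respectively (using $\beta-1<0$ in the second one). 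This yields $f\in C^\beta$, hence $f\in{\stackrel{o}{C}}^{\beta}$; it is just the classical continuity of fractional integration on H\"older spaces.

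The inclusion ${\stackrel{o}{B}}^{\beta}_{\infty,1}\subseteq D(B^\beta)$ is the heart of the matter. Fix $\phi\in{\stackrel{o}{B}}^{\beta}_{\infty,1}$ and set $\omega(h):=\|\phi(\cdot+h)-\phi\|_{C([0,T-h];X)}$, so $\int_0^T h^{-\beta-1}\omega(h)\,dh<\infty$. I would show that $\phi=B^{-\beta}g$ where $g$ is the Marchaud-type function
$$g(t):=\frac{1}{\Gamma(1-\beta)}\Big[t^{-\beta}\phi(t)+\beta\int_0^t r^{-\beta-1}\big(\phi(t)-\phi(t-r)\big)\,dr\Big],\qquad t\in(0,T].$$
The steps are: (1) prove the telescoping estimate $\|\phi(t)\|\le C_\beta\,t^\beta\int_0^t r^{-\beta-1}\omega(r)\,dr$ by writing $\phi(t)=\sum_{k\ge0}\big(\phi(t2^{-k})-\phi(t2^{-k-1})\big)$ and comparing the summands with the dyadic pieces of the Besov integral; in particular $t^{-\beta}\phi(t)$ is bounded and tends to $0$ as $t\to0^+$. (2) Deduce from (1) and from $\|\phi(t)-\phi(t-r)\|\le\omega(r)$ (for $0<r<t$) that $g$ is bounded on $(0,T]$, continuous there (splitting $\int_0^t=\int_0^\delta+\int_\delta^t$: the first part is uniformly small by integrability of $r^{-\beta-1}\omega(r)$ near $0$, the second depends continuously on $t$), and that $g(t)\to0$ as $t\to0^+$; so $g$ extends to an element of $C([0,T];X)$ with $g(0)=0$. (3) Show, by computing $\int_0^t g(s)\,ds$, interchanging the order of integration, and integrating by parts once (legitimate because $r^{-\beta-1}\omega(r)$ is integrable and the relevant primitive is $O(r^{1-\beta})$ at $r=0$, so no boundary term survives), that $\int_0^t g(s)\,ds=\frac{1}{\Gamma(1-\beta)}\int_0^t(t-s)^{-\beta}\phi(s)\,ds=B^{-(1-\beta)}\phi(t)$ for all $t$. (4) Conclude: $B^{-1}g=B^{-(1-\beta)}\phi$, so applying $B^{-\beta}$ and using $B^{-\beta}B^{-1}=B^{-(1+\beta)}=B^{-1}B^{-\beta}$ and the injectivity of $B^{-1}$ gives $B^{-\beta}g=\phi$; thus $\phi\in D(B^\beta)$ with $B^\beta\phi=g$.

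The only delicate points are in this last inclusion: the dyadic estimate (1), and above all the justification of (3) — absolute convergence of the iterated integrals and the vanishing of the boundary term in the integration by parts. This is precisely where the \emph{summability} $\int_0^T h^{-\beta-1}\omega(h)\,dh<\infty$ (Besov, rather than merely H\"older, regularity) is used, and it is also what makes $g$ continuous up to $t=0$. The asymmetry in the statement is genuine: for a general $f\in{\stackrel{o}{C}}^{\beta}$ the Marchaud integral need not converge, so ${\stackrel{o}{C}}^{\beta}$ is strictly larger than $D(B^\beta)$, just as ${\stackrel{o}{B}}^{\beta}_{\infty,1}$ is strictly smaller.
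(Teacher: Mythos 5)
Your proposal is correct, and for the substantive half of the statement it takes a genuinely different route from the paper. The reduction to $\alpha\in(0,1)$ and the proof of $D(B^\alpha)\subseteq{\stackrel{o}{C}}^{\alpha}([0,T];X)$ (a direct estimate of the fractional integral of a continuous function) coincide with what the paper does, apart from a sign slip in your bound for the second summand: the correct quantity is $\frac{M}{\beta}\bigl(t^\beta-(t')^\beta+(t'-t)^\beta\bigr)$, which is $\le\frac{M}{\beta}(t'-t)^\beta$ precisely because $t^\beta\le(t')^\beta$. The real divergence is in the inclusion ${\stackrel{o}{B}}^{\beta}_{\infty,1}([0,T];X)\subseteq D(B^\beta)$: the paper identifies ${\stackrel{o}{B}}^{\beta}_{\infty,1}([0,T];X)$ with the real interpolation space $(C([0,T];X),D(B))_{\beta,1}$ (quoting \cite{Gu1}) and reduces the embedding into $D(B^\beta)$ to the moment inequality $\|B^\beta u\|\le C\|u\|^{1-\beta}\|Bu\|^{\beta}$, proved via the resolvent estimate $\|B^\beta(\lambda+B)^{-1}\|_{\mL(C([0,T];X))}\le C\lambda^{\beta-1}$ and Lemma 1.10.1 of \cite{Tr1}; you instead exhibit $B^\beta\phi$ explicitly as the Marchaud derivative $g$ and verify $B^{-\beta}g=\phi$ by a Fubini and integration-by-parts computation. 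I checked that your step (3) closes (the boundary term vanishes since $\int_0^r\phi\,ds=o(r)$, which already follows from $\phi(0)=0$), and that the dyadic estimate (1) can be made rigorous by averaging the subadditivity bound $\omega(h)\le\omega(r)+\omega(h-r)$ over $r\in[h/2,h]$, which yields $\|\phi(t)\|\le 4\int_0^{t/2}r^{-1}\omega(r)\,dr\le C\,t^{\beta}\int_0^{t}r^{-\beta-1}\omega(r)\,dr$. What the paper's route buys is abstraction and economy: the moment-inequality argument works verbatim for any operator of type $\omega$ with $0\in\rho$, and the interpolation identification is reused later (Propositions \ref{pr8} and \ref{pr15}), so nothing is wasted. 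What your route buys is self-containedness (no appeal to \cite{Gu1} or \cite{Tr1}) and an explicit Marchaud formula for $B^\beta\phi$ on ${\stackrel{o}{B}}^{\beta}_{\infty,1}([0,T];X)$, of independent interest, at the price of being tied to this particular operator $B$ and of the technical care you rightly flag in steps (1)--(3).
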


\begin{proof} If $m \in \N$ and $m < \alpha < m+1$, 
$$
\begin{array}{c}
D(B^\alpha) = \{u \in D(B^m): B^m u  \in D(B^{\alpha-m})\} \\ \\
= \{u \in C^m([0, T] ; X) : u^{(k)}(0) = 0, k = 0, \dots, m-1, u^{(m)} \in D(B^{\alpha - m})\}. 
\end{array}
$$
So we are reduced to consider the case $\alpha \in (0, 1)$. It is know that 
$$
(C([0, T]; X), D(B))_{\alpha,1} = {\stackrel{o}{B}}^{\alpha}_{\infty,1}([0, T]; X) = \{f \in B^{\alpha}_{\infty,1}([0, T]; X): f(0) = 0\},
$$
(see for this \cite{Gu1}), where we  indicate with $(.,.)_{\alpha,p}$ ($0 < \alpha < 1$, $p \in [1, \infty]$) the corresponding real interpolation functor. In order to show that 
$$(C([0, T]; X), D(B))_{\alpha,1} \hookrightarrow D(B^\alpha)$$
we can try to show that there exists $C > 0$, such that, $\forall u \in D(B)$, 
\begin{equation}\label{eq13A}
\|B^\alpha u\|_{C([0, T]; X)} \leq C \|u\|_{C([0, T]; X)}^{1-\alpha} \|Bu\|_{C([0, T]; X)}^{\alpha}. 
\end{equation}
(see for this \cite{Tr1}, Lemma 1.10.1). So let $u \in D(B) = {\stackrel{o}{C}}^{1}([0, T]; X)$. For every $\lambda \in \R^+$ we have
\begin{equation}\label{eq13}
B^\alpha u =  \lambda B^\alpha(\lambda + B)^{-1} u + B^\alpha(\lambda + B)^{-1} Bu. 
\end{equation}
We estimate $\|B^\alpha(\lambda + B)^{-1}\|_{\mL(C([0, T]; X))}$. We have
$$
B^\alpha(\lambda + B)^{-1} = B^{\alpha-1}B(\lambda + B)^{-1}  = B^{\alpha-1} - \lambda B^{\alpha-1}(\lambda + B)^{-1}). 
$$
We indicate with $\Gamma$ a piecewise regular path, connecting $\infty e^{-i\theta}$ to $\infty e^{i\theta}$, for some $\theta \in (\frac{\pi}{2}, \pi)$, contained in $\C \setminus (-\infty, 0]$ and with $\Gamma'$ the path $\Gamma + 1$. 
Then, by the theorem of Cauchy, we have 
$$
(\lambda + B)^{-1} = - \frac{1}{2\pi i} \int_{\Gamma'} (\lambda + \nu)^{-1} (\nu - B)^{-1} d\nu, 
$$
$$
\begin{array}{c}
B^{\alpha-1}(\lambda + B)^{-1} = \frac{1}{(2\pi i)^2} \int_\Gamma (\int_{\Gamma'} \mu^{\alpha -1} (\lambda + \nu)^{-1} (\mu - B)^{-1}(\nu - B)^{-1} d\nu) d\mu \\ \\
= \frac{1}{(2\pi i)^2} \int_\Gamma (\int_{\Gamma'} (\lambda + \nu)^{-1} (\nu - \mu)^{-1} d\nu) \mu^{\alpha - 1} (\mu - B)^{-1} d\mu \\ \\
- \frac{1}{(2\pi i)^2} \int_{\Gamma'} (\int_{\Gamma} \mu^{\alpha - 1} (\nu - \mu)^{-1} d\mu ) (\lambda + \nu)^{-1} (\nu - B)^{-1} d\nu,
\end{array}
$$
by the resolvent identity. 
We have
$$
\int_{\Gamma'} (\lambda + \nu)^{-1} (\nu - \mu)^{-1} d\nu = 0, 
$$
$$
\int_{\Gamma} \mu^{\alpha - 1} (\nu - \mu)^{-1} d\mu = 2\pi i \nu^{\alpha -1}. 
$$
So 
$$
\begin{array}{c}
B^\alpha(\lambda + B)^{-1} = - \frac{1}{2\pi i} \int_\Gamma (\mu^{\alpha - 1} - \frac{\lambda \mu^{\alpha -1}}{\lambda + \mu}) (\mu - B)^{-1} d\mu \\ \\
= - \frac{1}{2\pi i} \int_\Gamma  \frac{\mu^\alpha}{\lambda + \mu} (\mu - B)^{-1} d\mu = - \frac{1}{2\pi i} \int_{\lambda \Gamma}  \frac{\mu^\alpha}{\lambda + \mu} (\mu - B)^{-1} d\mu \\ \\
=  -\frac{\lambda^\alpha}{2\pi i} \int_\Gamma \frac{\mu^\alpha}{1 + \mu} (\lambda \mu - B)^{-1} d\mu, 
\end{array}
$$
implying
$$
\|B^\alpha(\lambda + B)^{-1} \|_{\mL(C([0, T]; X))} \leq C_1 \lambda^{\alpha -1} \int_\Gamma \frac{|\mu|^{\alpha-1}}{1 + |\mu|}  |d\mu| \leq C_2 \lambda^{\alpha -1}. 
$$
We deduce from (\ref{eq13}) that 
$$\|B^\alpha u\|_{C([0, T]; X)} =  C_3 (\lambda^\alpha \|u\|_{C([0, T]; X)} + \lambda^{\alpha-1} \|Bu\|_{C([0, T]; X)}) \quad \forall \lambda \in \R^+. $$
Taking $\lambda =  \|u\|_{C([0, T]; X)}^{-1} \|Bu\|_{C([0, T]; X)}$, we obtain (\ref{eq13A}) and so 
$$
{\stackrel{o}{B}}^{\alpha}_{\infty,1}([0, T]; X)  \subseteq D(B^\alpha).
$$
The inclusion 
$$
D(B^\alpha) \subseteq {\stackrel{o}{C}}^{\alpha}([0, T]; X) 
$$
is much simpler: let $f := B^\alpha u$. Then $f \in C([0, T]; X)$ and, if $t \in [0, T]$, 
$$
u(t) = \frac{1}{\Gamma(\alpha)} \int_0^t (t-\tau)^{\alpha-1} f(\tau) d\tau. 
$$
Then, clearly, $u(0) = 0$. Moreover, if $0 \leq s < t \leq T$, 
$$
\begin{array}{c}
\|u(t) - u(s)\| \leq \frac{1}{\Gamma(\alpha)} (\int_s^t (t-\tau)^{\alpha-1} \|f(\tau)\| d\tau + \int_0^s [(s- \tau)^{\alpha-1} - (t-\tau)^{\alpha-1}] \|f(\tau)\| d\tau) \\ \\
\leq \frac{1}{\Gamma(\alpha+1)} \|f\|_{C([0, T]; X)} [(t-s)^\alpha - (t^\alpha - s^\alpha)] \leq \frac{1}{\Gamma(\alpha+1)} \|f\|_{C([0, T]; X)} (t-s)^\alpha. 
\end{array}
$$

\end{proof}

In order to avoid exceptions we introduce the following spaces:  
\begin{equation}
\begin{array}{c}
B^{1}_{\infty,\infty}([0, T]; X) := \{f \in C([0, T]; X) :  \\ \\
sup_{0 < h < T/2} h^{-1} \|f(. + h) - 2 f + f(. - h) \|_{C([h, T-h]; X)} dh < \infty\}, 
\end{array}
\end{equation}
It is known that, $\forall \epsilon \in (0, 1)$, 
$$
C^1([0, T]; X) \hookrightarrow B^{1}_{\infty,\infty}([0, T]; X) \hookrightarrow C^{1-\epsilon} ([0, T]; X). 
$$
We set
\begin{equation}\label{eq16}
{\stackrel{o}{B}}^{1}_{\infty,\infty}([0, T]; X):= \{f \in C([0, T]; X) : sup_{ t < t+h \leq T} h^{-1} \|\tilde f(t + h) - 2 \tilde f(t) + \tilde f(t- h) \| < \infty \},
\end{equation}
with 
$$
\tilde f(\tau) = \left\{\begin{array}{lll}
f(\tau) & {\rm if }& \tau \in [0, T], \\ \\
0 & {\rm if }& \tau < 0. 
\end{array}
\right. 
$$
${\stackrel{o}{B}}^{1}_{\infty,\infty}([0, T]; X)$ coincides with the set of elements in $B^{1}_{\infty,\infty}([0, T]; X)$ whose trivial extension to $(-\infty, T]$ belongs to $B^{1}_{\infty,\infty}((-\infty, T]; X)$. 

\begin{proposition}\label{pr8A}
${\stackrel{o}{B}}^{1}_{\infty,\infty}([0, T]; X)$ coincides with 
$$\{f \in B^{1}_{\infty,\infty}([0, T]; X) : \sup_{0 < t \leq T} t^{-1} \|f(t)\| < \infty\}.  $$
\end{proposition}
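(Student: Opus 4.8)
I would establish the two set inclusions separately, in each case reducing everything to bookkeeping on the second difference $\tilde f(t+h) - 2\tilde f(t) + \tilde f(t-h)$ according to the signs of $t-h$, $t$ and $t+h$. Recall first, from (\ref{eq16}), that $f$ lies in ${\stackrel{o}{B}}^{1}_{\infty,\infty}([0, T]; X)$ exactly when there is $C \in \R^+$ with $\|\tilde f(t+h) - 2\tilde f(t) + \tilde f(t-h)\| \leq Ch$ for every $t \in \R$ and every $h > 0$ with $t + h \leq T$; the point is that $t$ is here unrestricted, whereas the defining seminorm of $B^{1}_{\infty,\infty}([0, T]; X)$ only tests $t \in [h, T-h]$ (with $0 < h < T/2$).

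\textbf{The inclusion ${\stackrel{o}{B}}^{1}_{\infty,\infty}([0, T]; X) \subseteq \{f \in B^{1}_{\infty,\infty}([0, T]; X) : \sup_{0<t\le T} t^{-1}\|f(t)\| < \infty\}$.} Assume $f \in {\stackrel{o}{B}}^{1}_{\infty,\infty}([0, T]; X)$ with constant $C$. First I would restrict the test parameters to $0 < h < T/2$ and $t \in [h, T-h]$: then $t-h$, $t$, $t+h$ all lie in $[0,T]$, so $\tilde f$ there coincides with $f$, and the estimate says precisely that $f \in B^{1}_{\infty,\infty}([0, T]; X)$. For the growth bound I would fix $t \in (0,T]$ and, for any $h > t$, apply the estimate at base point $\tau := t - h$ (admissible since $\tau + h = t \leq T$ and $h > 0$): because $h > t$ we have $\tau < 0$ and $\tau - h < 0$, so $\tilde f(\tau) = \tilde f(\tau-h) = 0$ and only $\tilde f(\tau+h) = f(t)$ survives, giving $\|f(t)\| \leq Ch$. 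Taking the infimum over $h > t$ yields $\|f(t)\| \leq Ct$, hence $\sup_{0<t\le T} t^{-1}\|f(t)\| \leq C$.

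\textbf{The reverse inclusion.} Now suppose $f \in B^{1}_{\infty,\infty}([0, T]; X)$ and $K := \sup_{0<t\le T} t^{-1}\|f(t)\| < \infty$; continuity then forces $f(0) = 0$. Writing $C_1$ for the $B^{1}_{\infty,\infty}$-seminorm of $f$, I would verify the ${\stackrel{o}{B}}^{1}_{\infty,\infty}$-estimate for an arbitrary admissible pair $(t,h)$ by distinguishing four regimes. If $t-h \geq 0$, then $0 \leq t-h < t < t+h \leq T$ and $2h \leq t+h \leq T$, so $t \in [h,T-h]$ and $h \leq T/2$; the combination is then a genuine second difference of $f$, bounded by $C_1 h$ directly from membership in $B^{1}_{\infty,\infty}([0, T]; X)$ (the endpoint $h = T/2$ following from the case $h < T/2$ by continuity of $f$). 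If $t-h < 0 \leq t$, then $0 \leq t < h$, $\tilde f(t-h) = 0$, and $\|f(t+h) - 2f(t)\| \leq \|f(t+h)\| + 2\|f(t)\| \leq K(t+h) + 2Kt \leq 4Kh$. If $t < 0 \leq t+h$, then $0 \leq t+h < h$ and $\tilde f(t) = \tilde f(t-h) = 0$, so the combination equals $f(t+h)$, of norm $\leq K(t+h) \leq Kh$. If $t+h < 0$, all three terms vanish. Hence $\|\tilde f(t+h) - 2\tilde f(t) + \tilde f(t-h)\| \leq \max\{C_1, 4K\}\, h$ throughout, i.e. $f \in {\stackrel{o}{B}}^{1}_{\infty,\infty}([0, T]; X)$.

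\textbf{Main obstacle.} I expect no genuinely hard step. The one place where a naive argument could fail --- and the reason the growth hypothesis cannot simply be dropped --- is the mixed regime $t-h < 0 \leq t$, where the second difference of $\tilde f$ degenerates to the asymmetric expression $f(t+h) - 2f(t)$, which the Zygmund-type second-difference bound alone does not control linearly in $h$ (in general only up to a logarithmic factor). The pointwise estimate $\|f(t)\| \leq Kt$ is exactly what removes this difficulty, since in that regime both $t$ and $t+h$ are $O(h)$. The remaining work is purely bookkeeping: tracking which summands of $\tilde f$ vanish in each regime and checking that the constants (including at the endpoint $h = T/2$) stay uniform.
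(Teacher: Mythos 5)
Your proof is correct and follows essentially the same route as the paper's: both directions are handled by evaluating the second difference of $\tilde f$ at base points where one or two of the three terms vanish (the paper takes $h=2t$ at base point $-t$ to extract $\|f(t)\|\leq 2Ct$, you take the infimum over $h>t$ at base point $t-h$), and the reverse inclusion is the same case analysis with only cosmetically different constants. Nothing to add.
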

\begin{proof}
Let $f \in {\stackrel{o}{B}}^{1}_{\infty,\infty}([0, T]; X)$ . Let $t \in (0, T]$, $h = 2t$. Then, for some $C \in \R^+$ independent of $t$, 
$$
\|\tilde f(-t + h) - 2 \tilde f(-t) + \tilde f(t - h)\| \leq Ch, 
$$
that is, $\|f(t)\| \leq 2 Ct$. On the other hand, let $f \in B^{1}_{\infty,\infty}([0, T]; X)$ be such that, for some $C \in \R^+$, $\forall t \in [0, T]$, $\|f(t)\| \leq Ct$. Let $h \in \R^+$. Suppose that $t - h < 0 \leq t < t+h \leq T$. Then
$$
\|\tilde f(t+h) - 2 \tilde f(t) + \tilde f(t-h)\| = \|f(t+h) - 2f(t)\| \leq C(2t + h) \leq 3Ch. 
$$
Finally, suppose that $t < 0$, $h \in \R^+$, $0 \leq t + h \leq T$. Then
$$
\|\tilde f(t+h) - 2 \tilde f(t) + \tilde f(t-h)\| = \|f(t+h)\| \leq C(t+h) \leq Ch. 
$$
\end{proof}

More generally, we set, if $m \in \N$, 
\begin{equation}
\begin{array}{c}
B^{m}_{\infty,\infty}([0, T]; X) := \{f \in C^{m-1}([0, T]; X) :  f^{(m-1)} \in B^{1}_{\infty,\infty}([0, T]; X)\},
\end{array}
\end{equation}
\begin{equation}
{\stackrel{o}{B}}^{m}_{\infty,\infty}([0, T]; X):= \{f \in {\stackrel{o}{C}}^{m-1}([0, T]; X) : f^{(m-1)} \in  {\stackrel{o}{B}}^{1}_{\infty,\infty}([0, T]; X)\}. 
\end{equation}

It will be convenient to set, $\forall \gamma \in \R^+$, 
\begin{equation}
\stackrel{o}{{\mathcal C}^\gamma}([0, T]; X) := \left\{\begin{array}{lll}
\stackrel{o}{C^\gamma}([0, T]; X) & {\rm if } & \gamma \not \in \N, \\ \\
\stackrel{o}{B}^\gamma_{\infty, \infty}([0, T]; X) & {\rm if } & \gamma  \in \N. 
\end{array}
\right.
\end{equation}
It is known that, $\forall \alpha, \beta \in \R^+$, with $\alpha < \beta$, $\forall \theta \in (0, 1)$, 
\begin{equation}
(\stackrel{o}{{\mathcal C}^\alpha}([0, T]; X), \stackrel{o}{{\mathcal C}^\beta}([0, T]; X))_{\theta,\infty} = \stackrel{o}{\mathcal C}^{(1-\theta)\alpha + \theta \beta}([0, T]; X)). 
\end{equation}
See for this \cite{Gu1}. As a consequence, we deduce the following

\begin{proposition}\label{pr8}
Let $\alpha, \beta \in \R^+$. Then $B^\alpha$ is a linear and topological isomorphism between $\stackrel{o}{\mathcal C}^{\alpha+\beta}([0, T]; X)$ and $\stackrel{o}{\mathcal C}^{\beta}([0, T]; X)$.
\end{proposition}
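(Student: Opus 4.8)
\noindent The plan is to reduce, by composition, to the case $0<\alpha<1$, and to handle that case by interpolation together with Proposition~\ref{pr7}. For $\alpha=1$ the operator $B$ is $f\mapsto f'$, and directly from the definitions $f\mapsto f'$ is a topological isomorphism of $\stackrel{o}{\mathcal C}^{1+\beta}([0,T];X)$ onto $\stackrel{o}{\mathcal C}^{\beta}([0,T];X)$ for every $\beta\in\R^+$, with inverse $g\mapsto\int_0^{\cdot}g$; hence, since $B^{m}=(B^{1})^{m}$, the same holds for $B^{m}$, $m\in\N$. Writing a general $\alpha$ as $m+\alpha'$ with $m\in\N_{0}$, $\alpha'\in[0,1)$ and using $B^{\alpha}=B^{m}B^{\alpha'}$, it suffices to treat $0<\alpha<1$; fix such an $\alpha$. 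It is then enough to prove that, for every $\beta\in\R^+$, $B^{-\alpha}$ maps $\stackrel{o}{\mathcal C}^{\beta}([0,T];X)$ continuously into $\stackrel{o}{\mathcal C}^{\alpha+\beta}([0,T];X)$ and $B^{\alpha}$ maps $\stackrel{o}{\mathcal C}^{\alpha+\beta}([0,T];X)$ continuously into $\stackrel{o}{\mathcal C}^{\beta}([0,T];X)$: indeed these two maps are then mutually inverse, since $B^{\alpha}B^{-\alpha}$ is the identity on $C([0,T];X)$ and $B^{-\alpha}B^{\alpha}$ is the identity on $D(B^{\alpha})$, which contains $\stackrel{o}{\mathcal C}^{\alpha+\beta}([0,T];X)$ by the second mapping property.

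For the first mapping property I would interpolate two endpoint estimates furnished by Proposition~\ref{pr7}. Since $\alpha\notin\N$, Proposition~\ref{pr7} gives $B^{-\alpha}\bigl(C([0,T];X)\bigr)=D(B^{\alpha})\hookrightarrow\stackrel{o}{\mathcal C}^{\alpha}([0,T];X)$, and as the graph norm of $B^{-\alpha}f$ is dominated by $\|f\|_{C([0,T];X)}$, this says that $B^{-\alpha}\colon C([0,T];X)\to\stackrel{o}{\mathcal C}^{\alpha}([0,T];X)$ is bounded. Fixing $N\in\N$, for $u\in D(B^{N})$ one has $u=B^{-N}(B^{N}u)$, whence $B^{-\alpha}u=B^{-(N+\alpha)}(B^{N}u)\in D(B^{N+\alpha})\hookrightarrow\stackrel{o}{\mathcal C}^{N+\alpha}([0,T];X)$ (Proposition~\ref{pr7} once more, as $N+\alpha\notin\N$), so $B^{-\alpha}\colon D(B^{N})\to\stackrel{o}{\mathcal C}^{N+\alpha}([0,T];X)$ is bounded. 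Applying the real interpolation functor $(\cdot,\cdot)_{\theta,\infty}$, and using the reiteration identity stated before the Proposition in the form $(\stackrel{o}{\mathcal C}^{\alpha},\stackrel{o}{\mathcal C}^{N+\alpha})_{\theta,\infty}=\stackrel{o}{\mathcal C}^{\alpha+\theta N}([0,T];X)$ together with the identity $(C([0,T];X),D(B^{N}))_{\theta,\infty}=\stackrel{o}{\mathcal C}^{\theta N}([0,T];X)$ from \cite{Gu1}, I obtain that $B^{-\alpha}$ is bounded from $\stackrel{o}{\mathcal C}^{\theta N}([0,T];X)$ into $\stackrel{o}{\mathcal C}^{\alpha+\theta N}([0,T];X)$ for each $\theta\in(0,1)$; given $\beta$, choosing $N>\beta$ and $\theta=\beta/N$ proves the claim. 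The same argument, carried out with the order $1-\alpha$ in place of $\alpha$, shows that $B^{-(1-\alpha)}$ maps $\stackrel{o}{\mathcal C}^{\gamma}([0,T];X)$ continuously into $\stackrel{o}{\mathcal C}^{\gamma+1-\alpha}([0,T];X)$ for every $\gamma\in\R^+$.

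For the second mapping property I would use the factorization $B^{\alpha}=B^{1}\circ B^{-(1-\alpha)}$. Let $v\in\stackrel{o}{\mathcal C}^{\alpha+\beta}([0,T];X)$. By what was just proved, $B^{\alpha-1}v=B^{-(1-\alpha)}v$ lies in $\stackrel{o}{\mathcal C}^{1+\beta}([0,T];X)\subseteq C^{1}([0,T];X)$ (and vanishes at $0$, being a fractional integral of positive order of a continuous function); by the Remark showing that, for $0<\alpha<1$, $B^{\alpha}$ coincides with the Riemann--Liouville derivative of order $\alpha$, it follows that $v\in D(B^{\alpha})$ and $B^{\alpha}v=B\,(B^{\alpha-1}v)=B^{1}\bigl(B^{-(1-\alpha)}v\bigr)$. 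Since $B^{1}\colon\stackrel{o}{\mathcal C}^{1+\beta}([0,T];X)\to\stackrel{o}{\mathcal C}^{\beta}([0,T];X)$ is bounded, composing with the estimate for $B^{-(1-\alpha)}$ gives $\|B^{\alpha}v\|_{\stackrel{o}{\mathcal C}^{\beta}}\le C\,\|v\|_{\stackrel{o}{\mathcal C}^{\alpha+\beta}}$, the desired bound. Together with the first mapping property and the observations of the first paragraph, this completes the argument.

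The step I expect to require most care is the second paragraph: one must keep track of the interpolation identities at \emph{integer} exponents, where $\stackrel{o}{\mathcal C}^{m}$ is the (little) Zygmund-type space $\stackrel{o}{B}^{m}_{\infty,\infty}([0,T];X)$ and not $D(B^{m})$, and one must check that the identity $(C([0,T];X),D(B^{N}))_{\theta,\infty}=\stackrel{o}{\mathcal C}^{\theta N}([0,T];X)$ is indeed at hand — it is of exactly the same nature as, and is subsumed by, the interpolation results of \cite{Gu1}. Everything else — the reduction to $0<\alpha<1$, the factorization via the Remark, and the passage from the two boundedness statements to the isomorphism — is routine.
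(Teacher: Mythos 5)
Your argument is correct, but it takes a genuinely different route from the paper's. The paper does everything in a single interpolation step: $B^\alpha$ is tautologically an isomorphism of $D(B^{\alpha+\gamma})$ onto $D(B^{\gamma})$ for every $\gamma$ (graph norms), so interpolating between the two pairs $\gamma=\beta/2$ and $\gamma=2\beta$ with the functor $(\cdot,\cdot)_{1/3,\infty}$, and identifying $(D(B^{\gamma_1}),D(B^{\gamma_2}))_{1/3,\infty}$ with $\stackrel{o}{\mathcal C}^{(2\gamma_1+\gamma_2)/3}([0,T];X)$ by Proposition~\ref{pr7} and reiteration, yields the statement at once, for all $\alpha$ simultaneously, without ever verifying directly that $B^{\alpha}$ maps one H\"older space into another. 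You instead reduce to $\alpha\in(0,1)$, treat the two directions separately, use interpolation only for the smoothing direction $B^{-\alpha}$ (between the endpoints $C([0,T];X)\to\stackrel{o}{\mathcal C}^{\alpha}$ and $D(B^{N})\to\stackrel{o}{\mathcal C}^{N+\alpha}$ supplied by Proposition~\ref{pr7}), and obtain the differentiating direction from the factorization $B^{\alpha}=B\circ B^{\alpha-1}$ via the Riemann--Liouville remark. The paper's proof buys brevity and symmetry; yours buys transparency in the delicate direction, since the boundedness of $B^{\alpha}$ on the H\"older scale is reduced to the concrete integral operator $B^{-(1-\alpha)}$ followed by ordinary differentiation. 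Both arguments lean to exactly the same extent on unproved interpolation identities of the form $(\cdot\,,D(B^{N}))_{\theta,\infty}=\stackrel{o}{\mathcal C}^{\theta N}$, including at integer exponents (the paper's exponents $\beta/2$, $2\beta$ can also be integers, and Proposition~\ref{pr7} excludes $\alpha\in\N$), so the caveat you raise at the end applies equally to the paper's own proof and is, in both cases, deferred to \cite{Gu1} and the reiteration theorem.
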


\begin{proof}
$B^\alpha$ is a linear and topological isomorphism (isomorphism of Banach spaces) between $D(B^{\alpha + \beta/2})$ and $D(B^{\beta/2})$ and between $D(B^{\alpha + 2\beta})$ and $D(B^{2\beta})$. So, by the interpolation property, it is an isomorphism of Banach spaces between $(D(B^{\alpha + \beta/2}), D(B^{\alpha + 2\beta}))_{1/3, \infty}$ and $(D(B^{\beta/2}), D(B^{2\beta}))_{1/3, \infty}$. By Proposition \ref{pr7} and the reiteration property, 
$$
(D(B^{\beta/2}), D(B^{2\beta}))_{1/3, \infty} = (\stackrel{o}{{\mathcal C}^{\beta/2}}([0, T]; X), \stackrel{o}{{\mathcal C}^{2\beta}}([0, T]; X))_{1/3,\infty} =  {\stackrel{o}{\mathcal C}}^{\beta}([0, T]; X), 
$$
$$
(D(B^{\alpha+\beta/2}), D(B^{\alpha+2\beta}))_{1/3, \infty} = (\stackrel{o}{{\mathcal C}^{\alpha+\beta/2}}([0, T]; X), \stackrel{o}{{\mathcal C}^{\alpha+2\beta}}([0, T]; X))_{1/3,\infty} =  {\stackrel{o}{\mathcal C}}^{\alpha+\beta}([0, T]; X). 
$$
\end{proof}

\begin{proposition}\label{pr10}
Let $\alpha \in \R^+$, $\beta \geq 0$. Then: 

(I) if $\beta \in \N_0$, 
$$
\begin{array}{ccc}
\{u \in D(B^\alpha) : B^\alpha u \in C^\beta([0, T]; X)\} & = &  D(B^{\alpha + \beta}) \oplus \{\sum_{k=0}^{\beta -1} t^{k+\alpha} f_k : f_k \in X\}. 
\end{array}
$$

(II) If $\beta \not \in \N_0$, 
$$
\begin{array}{ccc}
\{u \in D(B^\alpha) : B^\alpha u \in C^\beta([0, T]; X)\} & = &  \stackrel{o}{\mathcal C}^{\alpha+\beta}([0, T]; X) \oplus \{\sum_{k=0}^{[\beta]} t^{k+\alpha} f_k : f_k \in X\}. 
\end{array}
$$

(III) In each case, 
\begin{equation}\label{eq22A}
f_k = \frac{1}{\Gamma (\alpha +k+1)} (B^\alpha u)^{(k)}(0).
\end{equation}
\end{proposition}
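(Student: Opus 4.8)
The plan is to use that $B^\alpha$ is injective and is inverted on its range by $B^{-\alpha}$, so that the set in question is completely determined once one understands, for a prescribed $f \in C^\beta([0, T]; X)$, which $u \in D(B^\alpha)$ satisfy $B^\alpha u = f$. I would split such an $f$ into its Taylor polynomial at $0$ plus a ``flat'' remainder, realize the polynomial part as the $B^\alpha$-image of the explicit monomials $t^{k+\alpha} f_k$ via Remark \ref{re9}, and realize the flat remainder as the $B^\alpha$-image of an element of the appropriate regularity space: $D(B^\beta) = {\stackrel{o}{C}}^\beta([0, T]; X)$ when $\beta \in \N$ (immediate by iterating the definition of $B$, via $D(B^n) = \{g \in C^n : g^{(k)}(0) = 0, \, k \leq n-1\}$), and ${\stackrel{o}{\mathcal C}}^\beta([0, T]; X)$ when $\beta \notin \N_0$, for which I would invoke Proposition \ref{pr8} to get that $B^\alpha$ is an isomorphism of ${\stackrel{o}{\mathcal C}}^{\alpha+\beta}([0, T]; X)$ onto ${\stackrel{o}{\mathcal C}}^\beta([0, T]; X)$.

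First I would record the basic computation: by Remark \ref{re9}, for $k \in \N_0$ and $f_k \in X$ one has $t^{k+\alpha} f_k \in D(B^\alpha)$ with $B^\alpha(t^{k+\alpha} f_k) = \frac{\Gamma(k+\alpha+1)}{k!} t^k f_k$; hence any finite sum $w = \sum_k t^{k+\alpha} f_k$ lies in $D(B^\alpha)$, $B^\alpha w$ is a polynomial, and $(B^\alpha w)^{(j)}(0) = \Gamma(j+\alpha+1) f_j$. From this, directness of the two direct sums is immediate: if $w \in D(B^{\alpha+\beta})$ in case (I), then $B^\alpha w \in D(B^\beta)$, so all derivatives of the polynomial $B^\alpha w$ of order $\leq \beta - 1$ vanish at $0$, forcing $f_j = 0$ for $j \leq \beta - 1$; if $w \in {\stackrel{o}{\mathcal C}}^{\alpha+\beta}([0, T]; X)$ in case (II), then $B^\alpha w \in {\stackrel{o}{\mathcal C}}^\beta([0, T]; X) = {\stackrel{o}{C}}^\beta([0, T]; X)$, forcing $f_j = 0$ for $j \leq [\beta]$; in both cases $w = 0$.

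The inclusion ``$\supseteq$'' is then a direct check: if $v \in D(B^{\alpha+\beta})$ (case (I)), resp. $v \in {\stackrel{o}{\mathcal C}}^{\alpha+\beta}([0, T]; X)$ (case (II)), then $v \in D(B^\alpha)$ and $B^\alpha v \in D(B^\beta) \subseteq C^\beta([0, T]; X)$, resp. $B^\alpha v \in {\stackrel{o}{C}}^\beta([0, T]; X) \subseteq C^\beta([0, T]; X)$, and adding a polynomial keeps $B^\alpha$ in $C^\beta([0, T]; X)$. For ``$\subseteq$'', given $u$ in the left-hand side I put $f := B^\alpha u \in C^\beta([0, T]; X)$ and set $N := \beta - 1$ in case (I), $N := [\beta]$ in case (II); writing $f = p + R$ with $p(t) = \sum_{k=0}^{N} \frac{f^{(k)}(0)}{k!} t^k$ the degree-$N$ Taylor polynomial of $f$ at $0$, one checks that $R \in C^\beta([0, T]; X)$ with $R^{(k)}(0) = 0$ for $k \leq N$, so $R \in D(B^\beta)$ in case (I) and $R \in {\stackrel{o}{C}}^\beta([0, T]; X) = {\stackrel{o}{\mathcal C}}^\beta([0, T]; X)$ in case (II). Hence $v := B^{-\alpha} R$ lies in $D(B^{\alpha+\beta})$ (case (I): $B^\alpha$ maps it bijectively onto $D(B^\beta)$), resp. in ${\stackrel{o}{\mathcal C}}^{\alpha+\beta}([0, T]; X)$ (case (II): Proposition \ref{pr8}), and taking $f_k := \frac{f^{(k)}(0)}{\Gamma(k+\alpha+1)}$ the basic computation gives $B^\alpha(v + \sum_{k=0}^{N} t^{k+\alpha} f_k) = R + p = f = B^\alpha u$; injectivity of $B^\alpha$ then yields $u = v + \sum_{k=0}^{N} t^{k+\alpha} f_k$, the asserted decomposition. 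Since $R^{(k)}(0) = 0$ for $k \leq N$, one has $(B^\alpha u)^{(k)}(0) = p^{(k)}(0) = f^{(k)}(0) = \Gamma(\alpha+k+1) f_k$, which is (\ref{eq22A}).

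The step I expect to demand the most care is selecting the correct ``base space'' in regime (II): when $\beta \notin \N_0$ but $\alpha + \beta$ is an integer, $D(B^{\alpha+\beta})$ is strictly smaller than ${\stackrel{o}{\mathcal C}}^{\alpha+\beta}([0, T]; X) = {\stackrel{o}{B}}^{\alpha+\beta}_{\infty,\infty}([0, T]; X)$, and it is exactly the isomorphism supplied by Proposition \ref{pr8} (which rests on the interpolation identities stated just before it) that guarantees $\{u \in D(B^\alpha) : B^\alpha u \in {\stackrel{o}{C}}^\beta([0, T]; X)\} = {\stackrel{o}{\mathcal C}}^{\alpha+\beta}([0, T]; X)$; once this is secured, the rest is the routine Taylor splitting combined with Remark \ref{re9}.
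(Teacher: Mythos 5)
Your proof is correct and takes essentially the same route as the paper's: split $C^\beta([0,T];X)$ into the Taylor polynomial at $0$ plus a remainder lying in $D(B^\beta)$ (resp.\ ${\stackrel{o}{\mathcal C}}{}^\beta([0,T];X)$), pull this decomposition back through $B^\alpha$ using Remark \ref{re9} for the monomials and $B^\alpha B^\beta=B^{\alpha+\beta}$ (resp.\ Proposition \ref{pr8}) for the remainder, and read off (III) from the vanishing at $0$ of the remainder's derivatives via Taylor's formula. The paper compresses this into a few lines; you have simply supplied the details, including the directness of the sums.
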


\begin{proof}
(I)-(II) We have 
$$
C^\beta([0, T]; X) = D(B^\beta) \oplus \{\sum_{k=0}^{\beta -1} t^k \ g_k: g_k \in X\}, 
$$
in case $\beta \in \N_0$, and 
$$
C^\beta([0, T]; X) = \stackrel{o}{\mathcal C}^\beta([0, T]; X) \oplus \{\sum_{k=0}^{[\beta]} t^k  g_k: g_k \in X\}, 
$$
otherwise. So (I)-(II) follow from Proposition \ref{pr8} and Remark \ref{re9}. 

Moreover, by Remark \ref{re9}, 
\begin{equation}\label{eq23A}
(B^\alpha u)(t) = (B^\alpha v)(t) + \sum_{k=0}^{[\beta]} \frac{\Gamma(\alpha + k+1)}{k!} t^k f_k,
\end{equation}
with $v \in D(B^\beta)$ in case (I), $\stackrel{o}{\mathcal C}^\beta([0, T]; X)$ in case (II).
Finally,  (III) follows from (\ref{eq23A})    and Taylor's formula. 

\end{proof}

\begin{lemma}\label{le12}
Let $\alpha, \beta \in \R^+$,  $u \in D(B^\alpha)$, $B^\alpha u \in C^\beta ([0, T]; X)$, with $\beta \in \R^+$. Let $N$ be the largest integer less than $\beta$. Let $k \in \N_0$, $k \leq N$. Then there exist real numbers $\gamma_{kj}$, independent of $T$ and $u$, and  $v \in D(B^{\alpha + \beta})$ if $\beta \in \N$, $v \in$ $ \stackrel{o}{\mathcal C}^{\alpha+\beta}([0, T]; X)$ if $\beta \not \in \N$, such that
$$
(B^\alpha u)^{(k)}(0) = t^{-k-\alpha} \sum_{j=1}^{N+1} \gamma_{kj} [u(jt) - v(jt)], \quad \forall t \in (0, \frac{T}{N+1}]. 
$$
\end{lemma}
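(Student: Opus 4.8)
The plan is to reduce everything to the structural decomposition furnished by Proposition~\ref{pr10}. Write $w := B^\alpha u \in C^\beta([0, T]; X)$ and recall that $N$ is the largest integer less than $\beta$, so that the index $k$ ranges over $0, \dots, N$. By Proposition~\ref{pr10} there is a (unique) $v$ lying in $D(B^{\alpha+\beta})$ if $\beta \in \N$ and in $\stackrel{o}{\mathcal C}^{\alpha+\beta}([0, T]; X)$ if $\beta \not\in \N$, together with elements $f_0, \dots, f_N \in X$, such that
$$
u(t) = v(t) + \sum_{j=0}^N t^{j+\alpha} f_j, \qquad f_j = \frac{1}{\Gamma(\alpha + j + 1)}\, w^{(j)}(0).
$$
(The upper limit of the sum in Proposition~\ref{pr10} is $\beta - 1 = N$ when $\beta \in \N$ and $[\beta] = N$ otherwise, so in both cases it is exactly $N$; and this $v$, which will be the one in the statement, does not depend on $t$.)

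First I would rewrite the identity as $t^{-\alpha}\bigl(u(t) - v(t)\bigr) = \sum_{j=0}^N t^j f_j$ for $0 < t \le T$, exhibiting the left-hand side as an $X$-valued polynomial of degree $N$ in $t$ with coefficients $f_0, \dots, f_N$. Then, fixing $t \in (0, \frac{T}{N+1}]$ and evaluating at the $N+1$ points $t, 2t, \dots, (N+1)t$ — all in $(0, T]$ by the constraint on $t$ — I obtain the linear system
$$
(jt)^{-\alpha}\bigl(u(jt) - v(jt)\bigr) = \sum_{i=0}^N j^i\,\bigl(t^i f_i\bigr), \qquad j = 1, \dots, N+1,
$$
in the unknowns $t^0 f_0, \dots, t^N f_N$. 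The coefficient matrix $(j^i)_{1 \le j \le N+1,\ 0 \le i \le N}$ is a Vandermonde matrix in the distinct nodes $1, \dots, N+1$, hence invertible, with inverse entries $c_{ki}$ that are rational numbers depending only on $k$ and $N$. Solving for the $k$-th unknown and using $w^{(k)}(0) = \Gamma(\alpha + k + 1)\, f_k$ gives
$$
(B^\alpha u)^{(k)}(0) = t^{-k-\alpha} \sum_{j=1}^{N+1} \gamma_{kj}\,\bigl(u(jt) - v(jt)\bigr), \qquad \gamma_{kj} := \Gamma(\alpha + k + 1)\, c_{kj}\, j^{-\alpha},
$$
which is the asserted formula; since the $c_{kj}$ depend only on $N$ and the factors $\Gamma(\alpha+k+1)$, $j^{-\alpha}$ depend only on $\alpha$, the $\gamma_{kj}$ are real numbers independent of $T$ and $u$.

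There is no genuinely hard analytic step: all the substance sits in Proposition~\ref{pr10}, and what is left is the elementary inversion of a Vandermonde system. The only points that need a moment's care are the matching of the summation range in Proposition~\ref{pr10} with $0 \le k \le N$ in both the integer and non-integer cases, the verification that $jt \in (0, T]$ for every $j \le N+1$ so that the evaluations are legitimate, and the bookkeeping confirming that $\gamma_{kj}$ depends only on $\alpha, k, N$. If one wished, the coefficients $c_{kj}$ could be written out explicitly through Lagrange interpolation at the nodes $1, \dots, N+1$, but this is unnecessary for the statement.
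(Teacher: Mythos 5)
Your proof is correct and takes essentially the same route as the paper: both invoke Proposition~\ref{pr10} to write $u(t) = v(t) + \sum_{k=0}^{N} t^{k+\alpha} f_k$ with $f_k = \Gamma(\alpha+k+1)^{-1}(B^\alpha u)^{(k)}(0)$ and then invert the $(N+1)\times(N+1)$ linear system obtained by evaluating at $t, 2t, \dots, (N+1)t$. Your explicit factorization reducing the coefficient matrix $((jt)^{k+\alpha})$ to a Vandermonde matrix in the distinct nodes $1,\dots,N+1$ merely spells out the invertibility (and the form $\gamma_{kj}t^{-k-\alpha}$ of the inverse) that the paper asserts without further comment.
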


\begin{proof}  We can replace $B^\alpha u)^{(k)}(0)$ with $f_k$ as in the statement of Proposition \ref{pr10}. Then we have
$$
\sum_{k=0}^N (jt)^{k+\alpha} f_k = u(jt) - v(jt), \quad j = 1, \dots, N+1, \quad t \in (0, \frac{T}{N+1}]. 
$$
with $v$ again as in the statement of Proposition \ref{pr10}. The matrix $((jt)^{k+\alpha})_{1 \leq j \leq N+1, 0 \leq k \leq N}$ is invertible and its inverse has the form $(\gamma_{kj} t^{-k-\alpha})_{0 \leq k \leq N, 1 \leq j \leq N+1}$. The conclusion follows.

\end{proof}

\begin{remark}
{In case $N = 0$, we have 
\begin{equation}\label{eq24A}
(B^\alpha u)(0) = t^{-\alpha} \Gamma (\alpha +1) [u(t) - v(t)], 
\end{equation}
in case $N = 1$, we have
\begin{equation}\label{eq25}
\left\{\begin{array}{l}
(B^\alpha u)(0) = t^{-\alpha} \frac{\Gamma(\alpha +1)}{2} \{2^{\alpha +1} [u(t) - v(t)] - [u(2t) - v(2t)]\}, \\ \\
(B^\alpha u)'(0) = t^{-\alpha-1} \frac{\Gamma(\alpha +2)}{2}\{- 2^\alpha [u(t) - v(t)] + [u(2t) - v(2t)]\}. 
\end{array}
\right. 
\end{equation}

}
\end{remark}

Now we want to study the following abstract equation:

\begin{equation}\label{eq20}
B^\alpha u(t) - A u(t) = f(t), \quad t \in [0, T]. 
\end{equation}
We assume that $f \in C([0, T]; X)$ and that $A : D(A) (\subset X) \to X$ is a linear (usually unbounded) operator, $\alpha \in \R^+$. 

\begin{definition}
A strict solution of (\ref{eq20}) is a function $u$ belonging to $C([0, T]; X)$, such that $u \in D(B^\alpha)$, $u(t) \in D(A)$ for every $t \in [0, T]$ and (\ref{eq20}) holds for every $t$ in $[0, T]$. 

\end{definition}

The main tool for our study will be the Da Prato-Grisvard's theory developed in \cite{DaGr1}. We are going to describe one of their results, following the presentation in \cite{Gu2}. 

The introduce the following setting:

\medskip

{\it (A1)  $\mX$ is a complex Banach space, $\mD$ and $\mA$ are linear (usually unbounded) operators in $\mX$, with domains $D(\mD)$, $D(\mA)$ respectively; 

(A2) for some $\phi_0, \phi_1 \in (0, \pi)$, $\mD$ and $-\mA$ are operators of type $\phi_0$, $\phi_1$; 

(A3) $\phi_0 + \phi_1 < \pi$; 

(A4) $0 \in \rho(\mD)$; 

(A5) $\mD$ and $\mA$ have commuting resolvents in the following sense: if $\lambda \in \rho(\mD)$ and $\mu \in \rho(\mD)$, 
$$(\lambda - \mD)^{-1} (\mu - \mA)^{-1} = (\mu - \mA)^{-1} (\lambda - \mD)^{-1}.$$
}

\medskip

We want to study the equation

\begin{equation}\label{eq21}
(\mD - \mA)u = f, 
\end{equation}
with $f \in \mX$. 

A strict solution of (\ref{eq21}) is an element $u$ belonging to $D(\mD) \cap D(\mA)$ such that (\ref{eq21}) holds. 

\medskip

\begin{remark}\label{re13}
{\rm The condition "$-\mA$ is an operator of type $\phi_1$ for some $M > 0$" is equivalent to 
$"\{\lambda \in \C \setminus \{0\} : |Arg(\lambda)| < \pi - \phi_1\} \subseteq \rho (\mA)$ and  $\forall \epsilon \in (0, \pi - \phi_1)$ there exists $M(\epsilon) \in \R^+$ such that, if $\lambda \in \C \setminus \{0\}$ and $|Arg(\lambda)| \leq \pi - \phi_1 - \epsilon$, 
$\|\lambda (\lambda - A)^{-1}\|_{\mL(X)} \leq M(\epsilon)$". 

}
\end{remark}

We fix a piecewise regular, simple path $\gamma$, describing $\{\lambda \in \C \setminus \{0\}: |Arg(\lambda)| = \phi_2, |\lambda| \geq r\} \cup \{\lambda \in \C \setminus \{0\}: |Arg(\lambda)| \leq \phi_2, |\lambda| = r\}$, oriented from $\infty e^{-i\phi_2}$ to 
$\infty e^{i\phi_2}$, with $\phi_0 < \phi_2 < \pi - \phi_1$, $r \in \R^+$, such that $\{\lambda \in \C : |\lambda| \leq r\} \subseteq \rho(\mD)$. It is easily seen that $\gamma$ is contained in $\rho(\mD) \cap \rho(\mA)$. So the following element $S$ of $\mL(\mX)$
is well defined: 
\begin{equation}\label{eq22}
S: = \frac{1}{2\pi i} \int_\gamma (\mD - \lambda)^{-1} (\lambda - \mA)^{-1} d\lambda. 
\end{equation}

The following result holds:

\begin{theorem}\label{th12}
Suppose that the assumptions (A1)-(A5) are fulfilled. Then:

(I) $\forall f \in \mX$ (\ref{eq21}) has at most one strict solution $u$.

(II) If such solution $u$ exists, then $u = Sf$. 

(III) In case, for some $\theta \in (0, 1)$,  $f$ belongs to the real interpolation space $(\mX, D(\mD))_{\theta,\infty}$($(\mX, D(\A))_{\theta,\infty}$) the strict solution $u$ exists. In this situation even  $\mD u$ and $\mA u$ belong to  $(\mX, D(\mD))_{\theta,\infty}$($(\mX, D(\A))_{\theta,\infty}$).

\end{theorem}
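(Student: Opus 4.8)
The plan is to follow the Da Prato--Grisvard strategy via the operator $S$ defined in \eqref{eq22}, proving uniqueness first and then existence-plus-regularity under the interpolation hypothesis. For uniqueness (I), suppose $u \in D(\mD) \cap D(\mA)$ solves $(\mD - \mA)u = 0$. The idea is to apply the operator $S$ (or rather a modified resolvent-type operator) to deduce $u = 0$. Concretely, for $\lambda$ on the contour $\gamma$ one has the algebraic identity $(\mD - \lambda)^{-1}(\lambda - \mA)^{-1}(\mD - \mA) = (\mD - \lambda)^{-1} + (\lambda - \mA)^{-1}$ on $D(\mD) \cap D(\mA)$, using that $\mD - \mA = (\mD - \lambda) + (\lambda - \mA)$ and that the resolvents commute (A5). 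Integrating this over $\gamma$ against $\frac{1}{2\pi i}$ and using that $\gamma$ can be closed up in the appropriate half-planes — the $(\mD-\lambda)^{-1}$ term is holomorphic and decaying to the right of $\gamma$ where $\mD$ has no spectrum near $\gamma$ (by A2, A4 and the choice $\phi_2 > \phi_0$), while $(\lambda - \mA)^{-1}$ decays to the left where $-\mA$ has the sectorial estimate (by A2, A3, $\phi_2 < \pi - \phi_1$) — one of the two contour integrals vanishes and the other reproduces $u$. This gives $u = S(\mD - \mA)u = S\cdot 0 = 0$. The same computation applied to a general $f$ shows that if a strict solution $u$ exists then $u = Sf$, which is (II).

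For (III), the plan is to show that when $f \in (\mX, D(\mD))_{\theta,\infty}$ the element $u := Sf$ actually lies in $D(\mD) \cap D(\mA)$, solves the equation, and that $\mD u, \mA u$ inherit the interpolation regularity. The key computation is to produce the formal identities $\mA S f = \frac{1}{2\pi i}\int_\gamma (\mD - \lambda)^{-1}\bigl[\lambda(\lambda - \mA)^{-1} - I\bigr] f\, d\lambda + (\text{something absorbed})$ and $\mD S f = \frac{1}{2\pi i}\int_\gamma \lambda (\mD-\lambda)^{-1}(\lambda - \mA)^{-1} f\, d\lambda$, after which $\mD S f - \mA S f = \frac{1}{2\pi i}\int_\gamma \bigl[ (\mD-\lambda)^{-1} + (\lambda-\mA)^{-1}\bigr] f\, d\lambda = f$ by closing the contour as in the uniqueness step. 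To justify that these integrals converge in $\mX$ (not merely after applying the bounded operators), one uses the interpolation hypothesis: for $f \in (\mX, D(\mD))_{\theta,\infty}$ the standard estimate $\|\lambda(\lambda - \mD)^{-1} f - f\|$ (equivalently $\|\mD(\lambda - \mD)^{-1}f\| = \|(\lambda - \mD)^{-1}\mD\cdots\|$-type bounds, or the $K$-functional characterization) gives a gain of $|\lambda|^{-\theta}$, which combined with the $|\lambda|^{-1}$ decay of $(\lambda - \mA)^{-1}$ makes $\int_\gamma |\lambda|^{-1-\theta}\,|d\lambda|$ convergent near $\infty$; near the circular part of $\gamma$ boundedness is automatic. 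One then checks $\mD u, \mA u \in (\mX, D(\mD))_{\theta,\infty}$ by re-examining these representations and estimating the $K$-functional of $\mD u$ (resp. $\mA u$) directly: split the $\gamma$-integral at $|\lambda| \sim 1/t$ and bound the two pieces to get $K(t, \mD u) \lesssim t^\theta$. The case $f \in (\mX, D(\mA))_{\theta,\infty}$ is symmetric, exchanging the roles of the two resolvent factors.

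The main obstacle, and the step requiring genuine care rather than routine manipulation, is the regularity assertion in (III): showing not just that $u = Sf$ solves the equation but that $\mD u$ and $\mA u$ land back in the same interpolation space with quantitative control. This is where one must commit to a concrete characterization of $(\mX, D(\mD))_{\theta,\infty}$ — most convenient is $f \in (\mX, D(\mD))_{\theta,\infty} \iff \sup_{\xi > 0} \xi^\theta \|\mD(\xi + \mD)^{-1} f\| < \infty$ (valid since $0 \in \rho(\mD)$ and $\mD$ is sectorial) — and then perform the contour-splitting estimate for the $K$-functional of $\mD S f$, carefully tracking that the constants depend only on $\phi_0, \phi_1, r, \theta$ and the sectorial bounds, not on $T$. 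A secondary technical point is justifying all the Fubini-type interchanges of $\int_\gamma$ with the closed operators $\mD$ and $\mA$ and the contour-closing arguments; these are standard once the absolute convergence in the previous paragraph is in hand, but they must be stated. I would also remark that this theorem is quoted from \cite{DaGr1} (and \cite{Gu2}), so in the paper it may suffice to cite it; if a proof is expected, the sketch above is the route.
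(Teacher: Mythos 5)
The paper itself gives no proof of Theorem \ref{th12}: it is imported verbatim from the Da Prato--Grisvard theory (\cite{DaGr1}, in the presentation of \cite{Gu2}), and the text explicitly frames it as a quoted result. So there is no internal argument to compare yours against; your closing remark that a citation may suffice is exactly what the paper does. That said, your sketch is the standard route to this theorem, and as a plan it is correct: the algebraic identity $(\mD-\lambda)^{-1}(\lambda-\mA)^{-1}(\mD-\mA)u=(\lambda-\mA)^{-1}u+(\mD-\lambda)^{-1}u$ on $D(\mD)\cap D(\mA)$ (using (A5)), integration over $\gamma$, and contour-closing give (I)--(II), and the interpolation hypothesis is precisely what makes $Sf$ a strict solution in (III).

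Two points where the sketch, taken literally, is not yet a proof. First, in the uniqueness step you propose to integrate $(\lambda-\mA)^{-1}u$ and $(\mD-\lambda)^{-1}u$ separately over $\gamma$ and close the contours; neither integral converges absolutely, since sectoriality only gives $O(|\lambda|^{-1})$ decay. The argument only works after the regularization $(\lambda-\mA)^{-1}u=\lambda^{-1}u+\lambda^{-1}(\lambda-\mA)^{-1}\mA u$ (and its analogue for $\mD$), which is available exactly because $u\in D(\mD)\cap D(\mA)$; the term $\frac{1}{2\pi i}\int_\gamma\lambda^{-1}\,d\lambda$ then has to be evaluated by an explicit limiting arc at infinity. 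Relatedly, your description of which resolvent is holomorphic on which side of $\gamma$ is reversed: with $\phi_0<\phi_2<\pi-\phi_1$, the spectrum of $\mD$ lies \emph{inside} the sector $\{|Arg(\lambda)|<\phi_2\}$ (to the right of $\gamma$ with the stated orientation) and $\sigma(\mA)$ lies to the left, so it is the $(\mD-\lambda)^{-1}$ integral that vanishes by closing to the left and the $(\lambda-\mA)^{-1}$ integral that reproduces $u$ via the arc at infinity on the right. Second, in (III) the genuinely nontrivial content --- the $K$-functional (or equivalently $\sup_{\xi>0}\xi^{\theta}\|\mD(\xi+\mD)^{-1}\cdot\|$) estimate showing $\mD Sf$ and $\mA Sf$ land back in $(\mX,D(\mD))_{\theta,\infty}$ --- is named but not performed; this splitting of the $\gamma$-integral at $|\lambda|\sim 1/t$ is the heart of \cite{DaGr1} and is where all the hypotheses (A2)--(A4) are actually consumed. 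Neither issue is a wrong turn; both are the places where the cited reference does real work that your outline defers.
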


For future use, we state the following

\begin{proposition} \label{pr13}

Let $A$  a linear operator of type $\phi$, for some $\phi \in (0, \pi)$, in the Banach space $X$. Let $\theta \in (0, 1)$. Then:

(I) the interpolation space $(X, D(A))_{\theta,\infty}$ coincides with $\{f \in X : sup_{t > 0} t^\theta \|A(t+A)^{-1} f\| < \infty\}$. Moreover, an equivalent norm in $(X, D(A))_{\theta,\infty}$ is 
$$
f \to \max\{\|f\|,  \sup_{t > 0} t^\theta \|A(t+A)^{-1} f\|\}. 
$$

(II) Let $\mA$ be the operator in $C([0, T]; X)$ defined as follows: 
\begin{equation}\label{eq23}
\left\{ \begin{array}{l}
D(\mA) = C([0, T]; D(A)), \\ \\
(\mA u)(t) = A u(t). 
\end{array}
\right. 
\end{equation}
Then $\rho(A) \subseteq \rho(\mA)$ 
$$
[(\lambda - \mA)^{-1}f] (t) = (\lambda - A)^{-1} f(t), 
$$
for every $\lambda \in \rho(A)$, $f \in C([0, T]; X)$, $t \in [0, T]$. $\mA$  a linear operator of type $\phi$ and the interpolation space $(C([0, T]; X); C([0, T]; D(A)))_{\theta,\infty}$ coincides with
$$C([0, T]; X) \cap B([0, T]; (X, D(A))_{\theta,\infty}). $$
\end{proposition}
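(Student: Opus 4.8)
The plan is to treat the two parts essentially independently. Part (I) is a classical characterization of real interpolation spaces for sectorial operators; I would prove it by the standard $K$-functional estimate together with the identity $A(t+A)^{-1} = I - t(t+A)^{-1}$. For the inclusion $\subseteq$: given $f \in (X, D(A))_{\theta,\infty}$ and $t > 0$, decompose $f = a + b$ with $a \in X$, $b \in D(A)$, and estimate $\|A(t+A)^{-1}f\| \le \|A(t+A)^{-1}a\| + \|A(t+A)^{-1}b\|$; the first term is bounded by $C\|a\|$ (boundedness of $A(t+A)^{-1}$, uniform in $t$, from type $\phi$ since $0<\phi<\pi$ means the negative real axis is in the resolvent set with the right estimate), and for the second write $A(t+A)^{-1}b = (t+A)^{-1}Ab$, bounded by $Ct^{-1}\|Ab\|$. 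Taking the infimum over decompositions gives $t^\theta\|A(t+A)^{-1}f\| \le C\, t^\theta \min(\|a\|, t^{-1}\|Ab\|)$-type bound, hence $\le C\, K(t^{-1}\!,\text{-ish})$; one then matches this against the $K_{\theta,\infty}$ norm after the usual change of variables. For the reverse inclusion, given the finiteness of $\sup_t t^\theta\|A(t+A)^{-1}f\|$, use the decomposition $f = t(t+A)^{-1}f + A(t+A)^{-1}f$ where the first summand lies in $X$ with norm $\le C\|f\|$ and, after rescaling $t \mapsto 1/s$, the second lies in $D(A)$ with $\|A\cdot\| = \|A^2(t+A)^{-1}f\| = \|A(t+A)^{-1}Af\|$... more cleanly: $s^{-1}\cdot s(s+A)^{-1}f$-type estimates show $t\, t(t+A)^{-1}f \in D(A)$ with controlled norm, giving $K(s,f) \le C s^\theta$ after reparametrization, which is exactly membership in $(X,D(A))_{\theta,\infty}$. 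The equivalence of norms then follows by tracking constants, noting $\|f\| \lesssim \|f\|_{(X,D(A))_{\theta,\infty}}$ trivially and $\|f\|_{\theta,\infty} \lesssim \max\{\|f\|, \sup_t t^\theta\|A(t+A)^{-1}f\|\}$ from the reverse-inclusion argument.

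For Part (II), the resolvent formula $[(\lambda - \mA)^{-1}f](t) = (\lambda - A)^{-1}f(t)$ is immediate: if $g(t) := (\lambda-A)^{-1}f(t)$, then $g \in C([0,T];D(A))$ because $(\lambda-A)^{-1} \in \mL(X)$ maps continuous $X$-valued functions to continuous $D(A)$-valued functions (continuity in the graph norm follows since $(\lambda - A)^{-1}f$ and $A(\lambda-A)^{-1}f = \lambda(\lambda-A)^{-1}f - f$ are both continuous), and $(\lambda - \mA)g = f$ by a pointwise check. Uniqueness is pointwise. Hence $\rho(A) \subseteq \rho(\mA)$, and the estimate $\|\lambda(\lambda-\mA)^{-1}\|_{\mL(C([0,T];X))} = \sup_t \|\lambda(\lambda-A)^{-1}f(t)\|/\|f\| \le \|\lambda(\lambda-A)^{-1}\|_{\mL(X)} \le M(\epsilon)$ on the relevant sector shows $\mA$ is of type $\phi$.

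The remaining claim, identification of $(C([0,T];X), C([0,T];D(A)))_{\theta,\infty}$ with $C([0,T];X) \cap B([0,T];(X,D(A))_{\theta,\infty})$, is where the main work lies, and I expect it to be the principal obstacle. The inclusion $\subseteq$ is the easy direction: the pointwise evaluation map $\delta_t : C([0,T];Y) \to Y$ is bounded for any Banach space $Y$ with norm $1$, so by interpolation $\delta_t : (C([0,T];X),C([0,T];D(A)))_{\theta,\infty} \to (X,D(A))_{\theta,\infty}$ is bounded uniformly in $t$, giving the boundedness into $B([0,T];(X,D(A))_{\theta,\infty})$, and the $C([0,T];X)$ membership is automatic. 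For the reverse inclusion I would use Part (I) and Part (II): given $f \in C([0,T];X)$ with $\sup_t \|f(t)\|_{(X,D(A))_{\theta,\infty}} < \infty$, use the characterization from (I) applied to $\mA$ — namely that $(C([0,T];X),C([0,T];D(A)))_{\theta,\infty}$ equals $\{f : \sup_{s>0} s^\theta\|\mA(s+\mA)^{-1}f\|_{C([0,T];X)} < \infty\}$ — and observe that $\|\mA(s+\mA)^{-1}f\|_{C([0,T];X)} = \sup_t \|A(s+A)^{-1}f(t)\|_X$, so that $\sup_s s^\theta \|\mA(s+\mA)^{-1}f\|_{C} = \sup_t \sup_s s^\theta\|A(s+A)^{-1}f(t)\|_X = \sup_t (\text{the }(X,D(A))_{\theta,\infty}\text{ seminorm of }f(t))$, which is finite by hypothesis. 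Thus the two descriptions coincide, with equivalent norms by the norm equivalence in (I). The only subtlety to be careful about is that applying (I) to $\mA$ requires $\mA$ to be of type $\phi$ (established above) and requires knowing $0$-behavior is irrelevant — the characterization in (I) is stated for operators of type $\phi < \pi$ without assuming injectivity or $0 \in \rho$, so it applies directly.
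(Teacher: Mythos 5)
Your proposal is correct and follows essentially the same route as the paper: the heart of the matter is the identification of $(C([0,T];X),C([0,T];D(A)))_{\theta,\infty}$, which the paper, exactly as you do, obtains by applying the characterization in (I) to the operator $\mA$ and interchanging the suprema over $t\in[0,T]$ and $s>0$ in $s^\theta\|A(s+A)^{-1}f(t)\|$. The only difference is that for part (I) the paper simply cites Triebel (Theorem 1.14.2) rather than reproving the $K$-functional characterization, so your sketch there (whose two summands $t(t+A)^{-1}f\in D(A)$ and $A(t+A)^{-1}f\in X$ you should keep straight) is supplementary rather than required.
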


\begin{proof}
For (I), see \cite{Tr1}, Theorem 1.14.2. 

(II) follows quite simply from (I). We prove only the identity between $(C([0, T]; X); C([0, T]; D(A)))_{\theta,\infty}$ and $C([0, T]; X) \cap B([0, T]; (X, D(A))_{\theta,\infty})$. If $f \in (C([0, T]; X); C([0, T]; D(A)))_{\theta,\infty}$, by (I), 
$$
sup_{t > 0} t^\theta \|\mA(t+ \mA)^{-1} f\|_{C([0, T]; X)} < \infty
$$
and
$$
\begin{array}{c}
sup_{t > 0} t^\theta \|\mA(t+ \mA)^{-1} f\|_{C([0, T]; X)} = sup_{t > 0} \sup_{s \in [0, T]} t^\theta \|A(t+ A)^{-1} f(s)\| \\ \\
= \sup_{s \in [0, T]} sup_{t > 0} t^\theta \|A(t+ A)^{-1} f(s)\|. 
\end{array}
$$
So 
$$
\begin{array}{c}
(C([0, T]; X); C([0, T]; D(A)))_{\theta,\infty} = (C([0, T]; X); D(\mA))_{\theta,\infty} \\ \\
 = \{f \in C([0, T]; X) : \sup_{s \in [0, T]} \|f(s)\|_{(X, D(A))_{\theta,\infty}} < \infty\}. 
 \end{array}
$$

\end{proof}

A simple application of Theorem \ref{th12} is the following: 

\begin{proposition}\label{pr15}
Let $\alpha \in (0, 2)$. Consider equation (\ref{eq20}),supposing that $-A$ is of type $\phi$, for some $M \in \R^+$, $\phi \in (0, (1 - \frac{\alpha}{2})\pi)$; then:

(I) (\ref{eq20}) has, at most, one strict solution, for every $f \in C([0, T]; X)$. 

(II) If $f \in$  $\stackrel{o}{{\mathcal C}^\beta}([0, T]; X)$, for some $\beta \in (0, \alpha)$, the strict solution $u$ exists and is such that $B^\alpha u$, $Au$ belong to $\stackrel{o}{{\mathcal C}^\beta}([0, T]; X)$. 

(III) If $f \in  C([0, T]; X) \cap B([0, T]; (X, D(A))_{\theta,\infty})$ (that is, $f$ is bounded with values in $(X, D(A))_{\theta,\infty}$) for some $\theta \in (0, 1)$, the strict solution $u$ exists and is such that $B^\alpha u$, $Au$ are bounded with values in $(X, D(A))_{\theta,\infty}$. 
\end{proposition}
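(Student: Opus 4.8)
The plan is to apply Theorem \ref{th12} with the Banach space $\mX = C([0,T];X)$, the operator $\mD = B^\alpha$ (with $D(\mD) = D(B^\alpha)$) and $\mA$ the canonical extension of $A$ to $C([0,T];X)$ as in \eqref{eq23}, i.e. $(\mA u)(t) = Au(t)$. First I would verify the hypotheses (A1)--(A5). Assumption (A1) is immediate. For (A2): by Proposition \ref{pr5}(II), $B^\alpha$ is of type $\frac{\alpha\pi}{2}$, so we may take $\phi_0 = \frac{\alpha\pi}{2}$; by Proposition \ref{pr13}(II), $-\mA$ is of type $\phi$ (the same $\phi$ for which $-A$ is of type $\phi$), so $\phi_1 = \phi$. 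Assumption (A3) is then exactly the hypothesis $\phi + \frac{\alpha\pi}{2} < \pi$, i.e. $\phi < (1-\frac{\alpha}{2})\pi$. Assumption (A4), $0 \in \rho(B^\alpha)$, was established right after Proposition \ref{pr2} (since $B^{-\alpha}$ is bounded and injective with bounded inverse $B^\alpha$, indeed $0\in\rho(B^\alpha)$ because $\rho(B^\alpha)=\C$ would be too strong for $\alpha\ge 2$ but here follows from $B^{-\alpha}\in\mL(C([0,T];X))$). For (A5): the resolvents of $B^\alpha$ are given by the integral formula \eqref{eq7} (or \eqref{eq9}), which only involves multiplication by scalar kernels $E_{\alpha,\alpha}(\lambda(t-s)^\alpha)(t-s)^{\alpha-1}$ and integration in $s$; the resolvent of $\mA$ acts pointwise in $t$ by $(\lambda-A)^{-1}$. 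These two operations commute because $(\lambda-A)^{-1}$ is a fixed bounded operator on $X$ and the $B^\alpha$-resolvent is an $X$-valued Bochner integral against a scalar kernel — so one pulls $(\lambda-A)^{-1}$ inside the integral. This gives (A5).

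Once (A1)--(A5) are checked, part (I) is immediate from Theorem \ref{th12}(I): the strict solution of $(B^\alpha - A)u = f$ in the sense of the abstract equation \eqref{eq21} is the same thing as a strict solution of \eqref{eq20} in the sense of the Definition preceding the statement (one checks $D(\mD)\cap D(\mA) = \{u \in D(B^\alpha) : u(t)\in D(A)\ \forall t\}$, and equality in $C([0,T];X)$ is equality for all $t$), so uniqueness transfers.

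For parts (II) and (III) I would invoke Theorem \ref{th12}(III), which requires $f$ to lie in a real interpolation space $(\mX, D(\mD))_{\theta,\infty}$ or $(\mX, D(\mA))_{\theta,\infty}$, and which then yields existence together with the regularity conclusion that $\mD u$ and $\mA u$ lie in the same interpolation space. For (III), the relevant space is $(\mX,D(\mA))_{\theta,\infty} = (C([0,T];X), C([0,T];D(A)))_{\theta,\infty}$, which by Proposition \ref{pr13}(II) equals $C([0,T];X)\cap B([0,T];(X,D(A))_{\theta,\infty})$ — exactly the hypothesis of (III) — and the conclusion that $B^\alpha u, Au$ belong to this space is precisely what is claimed. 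For (II), the relevant space is $(\mX,D(\mD))_{\theta,\infty} = (C([0,T];X), D(B^\alpha))_{\theta\alpha,\infty}$-type; more precisely I would write, for $f \in \stackrel{o}{\mathcal C}^\beta([0,T];X)$ with $0<\beta<\alpha$, that by Proposition \ref{pr8} (with the pair $D(B^{?})$) together with the interpolation identity $(\stackrel{o}{\mathcal C}^{0}, \stackrel{o}{\mathcal C}^{\alpha})_{\beta/\alpha,\infty} = \stackrel{o}{\mathcal C}^\beta$ — using $\stackrel{o}{\mathcal C}^0 = C([0,T];X)$ — one identifies $(C([0,T];X), D(B^\alpha))_{\beta/\alpha,\infty} = \stackrel{o}{\mathcal C}^\beta([0,T];X)$. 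Hence $f$ lies in $(\mX,D(\mD))_{\beta/\alpha,\infty}$, Theorem \ref{th12}(III) applies with $\theta=\beta/\alpha$, and gives that $B^\alpha u = \mD u$ and $Au = \mA u$ belong to the same space $\stackrel{o}{\mathcal C}^\beta([0,T];X)$.

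The main obstacle is the bookkeeping in part (II): one must correctly identify the interpolation space $(C([0,T];X), D(B^\alpha))_{\beta/\alpha,\infty}$ with $\stackrel{o}{\mathcal C}^\beta([0,T];X)$. This requires knowing that $C([0,T];X) = \stackrel{o}{\mathcal C}^0([0,T];X)$ in the relevant sense and applying the interpolation identity stated before Proposition \ref{pr8} together with Proposition \ref{pr7}/Proposition \ref{pr8} (to replace $D(B^\alpha)$ by $\stackrel{o}{\mathcal C}^\alpha$ up to equivalence of the interpolation scale — strictly, one uses that $\stackrel{o}{B}^\alpha_{\infty,1}\subseteq D(B^\alpha)\subseteq\stackrel{o}{C}^\alpha$ squeezes the interpolation spaces, or more cleanly the isomorphism statement of Proposition \ref{pr8}). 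A secondary point to handle with care is that in (II) the case $\beta \in \N$ needs the $\stackrel{o}{B}^\beta_{\infty,\infty}$ spaces, but this is already absorbed into the definition of $\stackrel{o}{\mathcal C}^\beta$ and the interpolation identity quoted before Proposition \ref{pr8}, so no separate argument is needed. The verification of (A5) is conceptually the only other place where something genuine happens, but it is routine given the explicit resolvent formula \eqref{eq7}.
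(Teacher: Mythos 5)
Your proposal is correct and follows essentially the same route as the paper: cast \eqref{eq20} as \eqref{eq21} with $\mX=C([0,T];X)$, $\mD=B^\alpha$, $\mA$ as in \eqref{eq23}, verify (A1)--(A5) via Proposition \ref{pr5} and the explicit resolvent formula, then apply Theorem \ref{th12}, using Proposition \ref{pr13}(II) for (III) and the squeeze $\stackrel{o}{B}{}^\alpha_{\infty,1}\subseteq D(B^\alpha)\subseteq\stackrel{o}{C}{}^\alpha$ together with the interpolation identities from \cite{Gu1} to identify $(C([0,T];X),D(B^\alpha))_{\beta/\alpha,\infty}=\stackrel{o}{\mathcal C}{}^\beta([0,T];X)$ for (II). This matches the paper's argument in all essentials.
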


\begin{proof} We set $\mX:= C([0, T]; X)$, $\mD:= B^\alpha$. $\mA$ is defined as in (\ref{eq23}). 
Then $\rho(A) \subseteq \rho(\mA)$ and  $- \mA$ is of type $\phi$ and  $\mD$ and $\mA$ have commuting resolvents: if $\lambda \in \C \setminus \{0\}$, $|Arg(\lambda)| > \frac{\alpha \pi}{2}$, $\mu \in \rho(A)$ and $f \in C([0, T]; X)$, we have, for every $t \in [0, T]$,
$$
\begin{array}{c}
[(\lambda - B^\alpha)^{-1} (\mu - \mA)^{-1}f](t) = \frac{1}{2\pi i} \int_0^t (\int_{\Gamma (\lambda)} \frac{e^{\nu(t-s)}}{\lambda - \nu^\alpha} d\nu) (\mu - A)^{-1} f(s) ds \\ \\
= (\mu - A)^{-1} [\frac{1}{2\pi i} \int_0^t (\int_{\Gamma (\lambda)} \frac{e^{\nu(t-s)}}{\lambda - \nu^\alpha} d\nu)  f(s) ds] \\ \\
= [(\mu - \mA)^{-1} (\lambda - B^\alpha)^{-1}f](t). 
\end{array}
$$
Now, it is known that, in order that $(A5)$ holds, it suffices that there exist $\lambda_0 \in \rho(\mD)$ and $\mu_0 \in \rho(\mA)$ such that
$$
(\lambda_0 - \mD)^{-1} (\mu_0 - \mA)^{-1} = (\mu_0 - \mA)^{-1} (\lambda_0 - \mD)^{-1}.
$$
See, for this, .... . So Theorem \ref{th12} is applicable. 

In order to show (II), we observe that, by Proposition \ref{pr7} and \cite{Gu1}, we have
$$
\begin{array}{c}
\stackrel{o}{{\mathcal C} ^\beta}([0, T]; X) = (C([0, T]; X), \stackrel{o}{B}^{\alpha}_{\infty,1}([0, T]; X))_{\beta/\alpha , \infty} \\ \\
  \subseteq (C([0, T]; X), D(B^\alpha))_{\beta/\alpha , \infty} \\ \\
  \subseteq 
(C([0, T]; X), \stackrel{o}{C}^{\alpha}([0, T]; X))_{\beta/\alpha , \infty} =  \hspace{.1in} \stackrel{o}{{\mathcal C}^\beta}([0, T]; X). 
\end{array}
$$
Finally, (III) can be obtained applying Proposition \ref{pr13} (II). 

\end{proof}

Now we try to write explicitly the operator $S$ in (\ref{eq22}) in our particular situation: we fix a piecewise regular, simple path $\gamma$ describing $\{\lambda \in \C : |\lambda| \geq 1, |Arg(\lambda)| = \phi_2\} \cup \{\lambda \in \C : |\lambda| = 1, |Arg(\lambda)| \leq \phi_2\}$, oriented 
from $\infty e^{-i\phi_2}$ to $\infty e^{i\phi_2}$, with $\frac{\alpha \pi}{2} < \phi_2 < \pi - \phi$. Then we have, $\forall f \in C([0, T]; X)$, $t \in [0, T]$, 
$$
Sf(t) = \frac{1}{(2\pi i)^2} \int_\gamma (\int_0^t (\int_{\Gamma(\lambda)} \frac{e^{\mu(t-s)}}{\mu^\alpha - \lambda } d\mu) (\lambda - A)^{-1} f(s) ds ) d\lambda. 
$$
We fix $\theta > \frac{\pi}{2}$, such that $\alpha \theta < \phi_2$, and take a piecewise regular, simple path $\Gamma$ describing $\{\mu \in \C : |\mu| \geq 2, |Arg(\mu)| = \theta\} \cup \{\mu \in \C : |\lambda| = 2, |Arg(\mu)| \leq \theta\}$, oriented 
from $\infty e^{-i\theta}$ to $\infty e^{i\theta}$. Then, by Proposition \ref{pr5}, we can take $\Gamma(\lambda) = \Gamma$ for every $\lambda \in \gamma$. 

We have 
$$
\begin{array}{c}
 \int_\gamma (\int_0^t (\int_{\Gamma} \frac{e^{Re(\mu)(t-s)}}{|\mu^\alpha - \lambda |} |d\mu|)| \|(\lambda - A)^{-1} f(s\|) ds ) |d\lambda| \\ \\
 \leq C_1 \int_{\gamma \times \Gamma} \frac{|e^{Re(\mu)t} - 1|}{|Re(\mu)| |\lambda| |\mu^\alpha - \lambda|} |d\lambda| |d\mu| \\ \\
 \leq C_2(  \int_{\gamma \times \Gamma} \min\{t, |Re(\mu)|^{-1}\} |\lambda|^{-1} (|\mu|^{\alpha} + |\lambda|)^{-1} |d\lambda| |d\mu| \\ \\
 \leq  C_3 \int_{[1 , \infty)^2} \frac{1}{r \rho(r + \rho^\alpha)} dr d\rho \leq C_4. 
 \end{array}
$$
So, by the theorems of Fubini and Cauchy, 
\begin{equation}\label{eq24}
\begin{array}{c}
Sf(t) = \frac{1}{(2\pi i)^2} \int_0^t (\int_{\Gamma} (\int_\gamma (\mu^\alpha - \lambda)^{-1} (\lambda - A)^{-1} d\lambda ) e^{\mu(t-s)} d\mu) f(s) ds \\ \\
= \frac{1}{2\pi i} \int_0^t  (\int_{\Gamma} e^{\mu(t-s)} (\mu^\alpha - A)^{-1} d\mu)  f(s) ds. 
\end{array}
\end{equation}

Now we look for conditions assuring that (\ref{eq21}) has a unique strict solution $u$ such that $B^\alpha u$ and $Au$ are in $C^\beta ([0, T]; X)$, for a fixed $\beta \in (0, \alpha) \setminus \{1\}$.

\begin{lemma}\label{le18}
Let  $0 < \beta <  \alpha$, $\beta \not \in \N$, and let $A$ be a closed linear operator in $X$. Let $u \in D(B^\alpha) \cap C^\beta([0, T]; D(A))$ be such that $B^\alpha u \in C^\beta([0, T]; X)$. Let $k\in \N_0$, $j < \beta$. Then,
$D^k(B^\alpha u)(0)$ belongs to the interpolation space 
$(X, D(A))_{\frac{\beta -k}{\alpha}, \infty}$. 
\end{lemma}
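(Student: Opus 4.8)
The plan is to combine Lemma \ref{le12} with the characterization of interpolation spaces in Proposition \ref{pr13} (I). First I would record the setting: by Proposition \ref{pr15} (applied with the obvious modifications, or directly by Proposition \ref{pr10}), since $u \in D(B^\alpha)$ with $B^\alpha u \in C^\beta([0,T];X)$, we may write, with $N$ the largest integer less than $\beta$,
$$
B^\alpha u(t) = B^\alpha v(t) + \sum_{k=0}^{N} \frac{\Gamma(\alpha+k+1)}{k!}\, t^k f_k, \qquad f_k = \frac{1}{\Gamma(\alpha+k+1)} (B^\alpha u)^{(k)}(0),
$$
where $v \in \stackrel{o}{\mathcal C}^{\alpha+\beta}([0,T];X) \subseteq D(B^{\alpha+\beta})$. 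Since $A$ is closed and $u(t) \in D(A)$ for all $t$, and $u(t) = B^{-\alpha}(B^\alpha u)(t) = \frac{1}{\Gamma(\alpha)}\int_0^t (t-s)^{\alpha-1}(B^\alpha u)(s)\,ds$, one checks that $v(t) \in D(A)$ as well, so the same decomposition applies to $Au$ with coefficients $A f_k$ and remainder $Av$. In particular, using Lemma \ref{le12}, for $k \le N$ and every $t \in (0, T/(N+1)]$ we have the identity
$$
(B^\alpha u)^{(k)}(0) = t^{-k-\alpha}\sum_{j=1}^{N+1}\gamma_{kj}\,[u(jt) - v(jt)],
$$
and applying $A$ (using closedness and $u(jt), v(jt) \in D(A)$):
$$
A (B^\alpha u)^{(k)}(0) = A f_k \cdot \Gamma(\alpha+k+1)/(\text{const}) \;=\; t^{-k-\alpha}\sum_{j=1}^{N+1}\gamma_{kj}\,[Au(jt) - Av(jt)].
$$

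The key estimate is then to bound $\|A(s + A)^{-1} f_k\|$ for $s > 0$ by $C s^{-(\beta-k)/\alpha}$, which by Proposition \ref{pr13} (I) is exactly the assertion $f_k \in (X,D(A))_{(\beta-k)/\alpha,\infty}$ (note $0 < (\beta-k)/\alpha < 1$ since $k \le N < \beta < \alpha$). To do this I would fix $s > 0$, set $t = s^{-1/\alpha}$ (shrinking to stay in $(0, T/(N+1)]$ when $s$ is large; for bounded $s$ the bound is trivial since $A(s+A)^{-1}$ is uniformly bounded and $f_k$ is fixed), and use the two decompositions: from $f_k = c_k t^{-k-\alpha}\sum_j \gamma_{kj}[u(jt)-v(jt)]$ write
$$
A(s+A)^{-1} f_k = c_k t^{-k-\alpha}\sum_{j=1}^{N+1}\gamma_{kj}\Big( s(s+A)^{-1}[u(jt)-v(jt)] - s (s+A)^{-1}[u(jt)-v(jt)] + (s+A)^{-1}A[u(jt)-v(jt)]\Big),
$$
more cleanly: split $A(s+A)^{-1} = I - s(s+A)^{-1}$, so $A(s+A)^{-1} f_k = f_k - s (s+A)^{-1} f_k$, hence $\|A(s+A)^{-1}f_k\| \le \|f_k\| + s\|(s+A)^{-1}\| \cdot \|f_k\|$ — but that only gives a bound independent of $s$. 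Instead I would use: $\|A(s+A)^{-1} f_k\| \le c_k t^{-k-\alpha}\sum_j |\gamma_{kj}|\, \|A(s+A)^{-1}[u(jt)-v(jt)]\|$, and then estimate $\|A(s+A)^{-1}w\| \le \min\{ C\|Aw\|,\, C s \|w\|\}$ for $w = u(jt)-v(jt) \in D(A)$.

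So the crux is controlling both $\|u(jt)-v(jt)\|$ and $\|A(u(jt)-v(jt))\|$ as functions of $t$. Since $v(0) = 0$, $v'(0) = \dots = v^{(N)}(0) = 0$ and $v \in C^{\alpha+\beta} \supseteq C^{\beta}$ with $Av$ of the same regularity class (indeed $v \in D(B^{\alpha+\beta})$ and by the analogue of Proposition \ref{pr10}/Proposition \ref{pr15}, $Av \in \stackrel{o}{\mathcal C}^{\beta}$ since $B^{\alpha}v = (B^\alpha u) - \sum t^k(\dots) \in C^\beta$), a Taylor-with-remainder argument gives $\|v(jt)\| \le C t^{\lceil\beta\rceil}$ and, from $\sum_{k=0}^N (jt)^{k+\alpha} f_k = u(jt)-v(jt)$, that $\|u(jt)-v(jt)\| \le C t^{\alpha}$ (the lowest power present, $k=0$, with the higher ones absorbed) — more precisely $\|u(jt)-v(jt)\| \le C t^{\alpha + 0} = C t^{\alpha}$ for $t$ small, so $s\|u(jt)-v(jt)\| \le C s\, s^{-1} = C$ after optimizing... this is where care is needed: one must track that the dominant contribution to $t^{-k-\alpha}[u(jt)-v(jt)]$ in the two regimes $\|A(s+A)^{-1}w\| \le Cs\|w\|$ versus $\le C\|Aw\|$ combines to the claimed power $s^{-(\beta-k)/\alpha}$. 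Concretely, with $w_j = u(jt)-v(jt) = \sum_{\ell=k}^{N}(jt)^{\ell+\alpha} f_\ell$ (after subtracting off lower $f_\ell$'s via the other rows of the Vandermonde-type system), one has $Aw_j$ controlled by $C t^{\alpha}\|Au\|_{C^\beta\text{-type}}$ and $w_j$ by $C t^{\beta+\alpha}$ (using the extra vanishing from $v \in C^{\alpha+\beta}$ and the matching of Taylor coefficients), and then $\min\{C\|Aw_j\|, Cs\|w_j\|\}\cdot t^{-k-\alpha}$ with $t = s^{-1/\alpha}$ yields $\min\{C s^{-(\alpha + \text{something})/\alpha + k/\alpha}, \ldots\}$. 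The main obstacle I anticipate is precisely this bookkeeping: extracting from the single regularity hypothesis $B^\alpha u, Au \in C^\beta$ the correct two-sided interpolation inequality, i.e. showing the "good" bound $\|A(u(jt)-v(jt))\| \lesssim t^{?}$ with enough powers of $t$ to beat the $t^{-k-\alpha}$ prefactor after optimization — this requires carefully using that $v$ lies in the higher space $\stackrel{o}{\mathcal C}^{\alpha+\beta}$ (not just $D(B^\alpha)$) so that the subtraction $u - v$ kills all Taylor terms up to order $N$ and the residual is genuinely $O(t^{\alpha+(\beta - N)})$-small in $X$ while being $O(t^{\alpha})$ in the graph norm of $A$. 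Once these two estimates are in hand, substituting $t \asymp s^{-1/\alpha}$ and taking the minimum gives $\|A(s+A)^{-1}(B^\alpha u)^{(k)}(0)\| \le C s^{-(\beta-k)/\alpha}$ for all large $s$, and hence by Proposition \ref{pr13} (I), $(B^\alpha u)^{(k)}(0) \in (X, D(A))_{(\beta-k)/\alpha,\infty}$.

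$\square$
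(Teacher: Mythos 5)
Your overall strategy (invoke Lemma \ref{le12} to write $(B^\alpha u)^{(k)}(0)=t^{-k-\alpha}\sum_{j}\gamma_{kj}[u(jt)-v(jt)]$ and then trade an $X$-norm smallness against a $D(A)$-norm growth with $t\asymp s^{1/\alpha}$) is the right one and is essentially the paper's. But the execution has two genuine flaws. First, you route everything through the estimate $\|A(s+A)^{-1}f_k\|\leq Cs^{-(\beta-k)/\alpha}$ and Proposition \ref{pr13}(I). That resolvent characterization of $(X,D(A))_{\theta,\infty}$ presupposes that $A$ (or $-A$) is sectorial, so that $(s+A)^{-1}$ exists for $s>0$ with $s(s+A)^{-1}$ uniformly bounded; the lemma assumes only that $A$ is closed, and no resolvent is available. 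The paper works directly with the $K$-functional $K(s,x)=\inf\{\|x-y\|+s\|y\|_{D(A)}:y\in D(A)\}$, which is defined for any closed $A$ and already encodes the $\min\{C\|Aw\|,\,Cs\|w\|\}$ dichotomy you try to build by hand. Second, and more seriously, your splitting needs $v(jt)\in D(A)$ in order to speak of $\|A(u(jt)-v(jt))\|$; your justification of this is only a gesture, and the claim cannot hold in general. Indeed, if $u(jt)$ and $v(jt)$ both lay in $D(A)$ for all small $t$ and all $j=1,\dots,N+1$, then $\sum_{\ell}(jt)^{\ell+\alpha}f_\ell\in D(A)$, and inverting the Vandermonde-type matrix would yield $f_k\in D(A)$ outright --- a conclusion strictly stronger than the lemma's and false in general.

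The repair is the paper's choice of approximant in the $K$-functional: take $y=w_k(t):=t^{-k-\alpha}\sum_{j}\gamma_{kj}u(jt)$, which lies in $D(A)$ because $u\in C^\beta([0,T];D(A))$, so that the error $(B^\alpha u)^{(k)}(0)-w_k(t)=-t^{-k-\alpha}\sum_j\gamma_{kj}v(jt)$ involves only $v$ and is measured only in the $X$-norm, where $v\in\stackrel{o}{\mathcal C}^{\alpha+\beta}([0,T];X)$ gives $\|v(jt)\|\leq Ct^{\alpha+\beta}$ (not $Ct^{\lceil\beta\rceil}$ as you wrote --- the full order of vanishing $\alpha+\beta$ is exactly what produces the error $O(t^{\beta-k})$), while $\|w_k(t)\|_{D(A)}\leq Ct^{\beta-\alpha-k}$ because the Taylor coefficients of $u$ at $0$ of order less than $\beta$ vanish in $D(A)$. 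Setting $t=s^{1/\alpha}$ then gives $K(s,(B^\alpha u)^{(k)}(0))\leq Cs^{(\beta-k)/\alpha}$, which is the assertion. The ``bookkeeping'' you flag as unresolved dissolves once the $u$-part and the $v$-part are kept on opposite sides of the $K$-functional; as written, your argument is not complete.
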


\begin{proof} In order to show the assertion, we shall employ the definition of the space $(X, D(A))_{\theta, \infty}$ by the K-Method (see \cite{Tr1}, Chapter 1.3): for every $s \in \R^+$, we define the functional $K(s,.)$ in $X$ as follows: 
$$
K(s,x) := \inf \{ \|x - y\| + s \|y\|_{D(A)} : y \in D(A)\}.
$$
Then $x \in (X, D(A))_{\theta, \infty}$ if and only if, for some $C \in \R^+$, $\forall s \in \R^+$, $K(s,x) \leq C s^\theta$. By Lemma \ref{le12}, if $k \in \N_0$ and $k \leq [\beta]$, 
$$
(B^\alpha u)^{(k)}(0) = t^{-k-\alpha} \sum_{j=1}^{N+1} \gamma_{kj} [u(jt) - v(jt)], \quad \forall t \in (0, \frac{T}{N+1}], 
$$
for certain constants $\gamma_{kj}$ and some $v$ in $\stackrel{o}{\mathcal C}^{\alpha+\beta}([0, T]; X)$. If $t \in (0, \frac{T}{N+1}]$, we set
$$
w_k(t) := t^{-k-\alpha} \sum_{j=1}^{N+1} \gamma_{kj} u(jt). 
$$
Then
$$
\|(B^\alpha u)^{(k)}(0) - w_k(t)\| = t^{-k-\alpha} \|\sum_{j=1}^{N+1} \gamma_{kj} v(jt)\| \leq C_1 t^{\beta - k}, 
$$
as $v \in$  $\stackrel{o}{\mathcal C}^{\alpha+\beta}([0, T]; X)$. This is true even in case $\alpha + \beta \in \N$, by Proposition \ref{pr8A}. 
Moreover, as $u \in C^\beta([0, T]; D(A))$ and $Au^{(j)}(0) = 0$ if $j < \beta$, we have
$$
\|w_k(t)\|_{D(A)} \leq C_2 t^{\beta - \alpha-k}, \quad \forall t \in [0, \frac{T}{N+1}]. 
$$
Let $s \in \R^+$. We set
$$
z_k(s) := \left\{\begin{array}{lll}
w_k(s^{1/\alpha}) & {\rm if } & 0 < s \leq  (\frac{T}{N+1})^\alpha, \\ \\
0  & {\rm if } & s > (\frac{T}{N+1})^\alpha . 
\end{array}
\right. 
$$
We deduce, $\forall s \in \R^+$, 
$$
K(s, B^\alpha u(0)) \leq \|B^\alpha u(0) - z_k(s)\| + s \|z_k(s)\|_{D(A)} \leq C_3s^{\frac{\beta - k}{\alpha}}, 
$$
for some $C_3 \in \R^+$ independent of $s$. 
\end{proof}

\begin{proposition} \label{pr19}
Let $\alpha \in (0, 2)$. Consider equation (\ref{eq20}),supposing that $-A$ is of type $\phi$, for some $\phi \in (0, (1 - \frac{\alpha}{2})\pi)$. Let $\beta \in (0, \alpha)$, $\beta \neq 1$, $f \in C([0, T]; X)$. Then the following conditions are necessary and sufficient, in order that there exist a (unique) strict solution $u$ such that $B^\alpha u$, $\mA u$ belong to $C^\beta([0, T]; X)$: 

(a) $f \in C^\beta([0, T]; X)$; 

(b) if $k \in \N_0$ and $k < \beta$, $f^{(k)}(0)$ belongs to the interpolation space $(X, D(A))_{\frac{\beta - k}{\alpha}, \infty}$. 
\end{proposition}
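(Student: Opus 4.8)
The plan is to prove sufficiency and necessity separately, using Theorem \ref{th12} and the explicit operator $S$ from (\ref{eq24}) as the main engine. For \emph{sufficiency}, I would reformulate (\ref{eq20}) as the abstract equation (\ref{eq21}) with $\mX = C([0,T];X)$, $\mD = B^\alpha$ (of type $\frac{\alpha\pi}{2}$ by Proposition \ref{pr5}) and $\mA$ defined as in (\ref{eq23}); assumptions (A1)--(A5) hold exactly as verified in the proof of Proposition \ref{pr15}. The goal is to show that conditions (a)--(b) force $f$ to lie in a suitable real interpolation space to which part (III) of Theorem \ref{th12} applies. Concretely, I expect that (a)--(b) are equivalent to $f$ belonging to a space like $\{g \in C^\beta([0,T];X) : g^{(k)}(0) \in (X,D(A))_{(\beta-k)/\alpha,\infty} \text{ for } k<\beta\}$, and this space should be identifiable, via the reiteration and mixed-derivative interpolation identities of \cite{Gu1} together with Proposition \ref{pr13}(II), with an interpolation space of the form $(\mX, D(\mA))_{\theta,\infty}$ (or an intersection of such with a $\stackrel{o}{\mathcal C}$-type space handling the boundary conditions). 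Once $f$ is placed in the correct interpolation space, Theorem \ref{th12}(III) yields existence of the strict solution $u = Sf$ with $B^\alpha u, \mA u$ in that same interpolation space; it then remains to check that membership in that interpolation space, combined with the correct vanishing of derivatives at $0$, is equivalent to membership in $C^\beta([0,T];X)$, which is a regularity bootstrap using Proposition \ref{pr10} and Lemma \ref{le12}.

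For \emph{necessity}, suppose a strict solution $u$ exists with $B^\alpha u, \mA u \in C^\beta([0,T];X)$. From (\ref{eq20}) we immediately get $f = B^\alpha u - Au \in C^\beta([0,T];X)$, which is (a). For (b), differentiate (\ref{eq20}) $k$ times and evaluate at $t=0$: $f^{(k)}(0) = (B^\alpha u)^{(k)}(0) - (Au)^{(k)}(0)$. Since $u \in C^\beta([0,T];D(A))$ in the relevant sense (here one must first argue that $\mA u \in C^\beta$ together with $u \in D(B^\alpha)$ forces $u \in C^\beta([0,T];D(A))$ with the right boundary vanishing — this follows from Proposition \ref{pr10} applied with the operator $A$, or rather from the structure of $D(B^\alpha)$ in Proposition \ref{pr7}), Lemma \ref{le18} applies directly and gives $(B^\alpha u)^{(k)}(0) \in (X,D(A))_{(\beta-k)/\alpha,\infty}$; and $(Au)^{(k)}(0)$ lies in $D(A) \subseteq (X,D(A))_{(\beta-k)/\alpha,\infty}$, so $f^{(k)}(0)$ lies in that interpolation space as well, which is (b).

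The main obstacle will be the bookkeeping in the sufficiency direction: one must identify precisely which interpolation space encodes conditions (a)--(b), and show that Theorem \ref{th12}(III)'s conclusion (``$\mD u, \mA u \in (\mX, D(\mD))_{\theta,\infty}$ or $(\mX,D(\mA))_{\theta,\infty}$'') translates back into the pointwise statement ``$B^\alpha u, Au \in C^\beta([0,T];X)$''. This requires splitting off the polynomial terms $\sum_k t^{k+\alpha} f_k$ as in Proposition \ref{pr10}, checking the compatibility conditions (b) are exactly what is needed for the interpolation-space data at $t=0$ to be consistent, and invoking the interpolation identities of \cite{Gu1} for the $\stackrel{o}{\mathcal C}$ scale; the case distinction $\beta \gtrless 1$ (hence the exclusion $\beta \neq 1$, avoiding the Besov exception $\stackrel{o}{B}^1_{\infty,\infty}$) enters here, since for $\beta > 1$ there are genuinely two boundary conditions to match, as displayed in (\ref{eq25}). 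A secondary technical point is justifying, before applying Lemma \ref{le18} in the necessity part, that a strict solution with $\mA u \in C^\beta$ automatically has $u \in C^\beta([0,T];D(A))$ with $Au^{(j)}(0)=0$ for $j<\beta$; this should come from writing $u = B^{-\alpha}(B^\alpha u)$, applying $A$, and using closedness of $A$ together with Proposition \ref{pr2} and the Hölder estimate at the end of the proof of Proposition \ref{pr7}.
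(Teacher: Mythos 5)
Your necessity argument is essentially the paper's: (a) is immediate from the equation, and (b) follows from Lemma \ref{le18} applied to $(B^\alpha u)^{(k)}(0)$ together with the observation that $A u^{(k)}(0)=0$ for $k<\beta$ (which holds because $u\in D(B^\alpha)\subseteq \stackrel{o}{C}^{\alpha}$ forces $u^{(k)}(0)=0$ for $k<\alpha$, and $A$ is closed). That half is sound.

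The sufficiency half has a genuine gap. You propose to encode (a)--(b) as membership of $f$ in a single real interpolation space of the form $(\mX,D(\mD))_{\theta,\infty}$ or $(\mX,D(\mA))_{\theta,\infty}$ (possibly intersected with a $\stackrel{o}{\mathcal C}$-space) and then invoke Theorem \ref{th12}(III) once. No such identification is available: $(\mX,D(\mA))_{\theta,\infty}=C([0,T];X)\cap B([0,T];(X,D(A))_{\theta,\infty})$ requires $f(t)\in (X,D(A))_{\theta,\infty}$ \emph{uniformly in $t$}, whereas (b) constrains only the traces $f^{(k)}(0)$; and $(\mX,D(\mD))_{\theta,\infty}=\stackrel{o}{\mathcal C}^{\theta\alpha}$ requires those traces to vanish. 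The space cut out by (a)--(b) is neither, and Theorem \ref{th12}(III) does not apply to sums or intersections of the two scales. The paper instead decomposes the \emph{datum}, $f=(f-\sum_{k<\beta}t^k f^{(k)}(0))+\sum_{k<\beta}t^k f^{(k)}(0)$, treats the first piece by Proposition \ref{pr15}(II) (the $\stackrel{o}{\mathcal C}^\beta$ scale), and the polynomial pieces by Proposition \ref{pr15}(III). Even after that, the deeper issue remains, and your ``regularity bootstrap using Proposition \ref{pr10} and Lemma \ref{le12}'' does not address it: for the piece with datum $f(0)\in(X,D(A))_{\beta/\alpha,\infty}$, Theorem \ref{th12}(III) only yields that $AS(f(0))$ is \emph{bounded} with values in $(X,D(A))_{\beta/\alpha,\infty}$, which does not imply $AS(f(0))\in C^\beta([0,T];X)$. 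The paper closes this by going to the explicit representation (\ref{eq24}), writing $g(t)=\frac{1}{2\pi i}\int_\Gamma e^{\mu t}(\mu^\alpha-A)^{-1}f(0)\,d\mu$, and using the resolvent characterization of Proposition \ref{pr13}(I) to get the kernel estimate $\|Ag(t)\|\le C t^{\beta-1}$ (and $\|Ag'(t)\|\le Ct^{\beta-2}$ when $1<\beta<\alpha$), which after integration gives the temporal H\"older bound. This quantitative step, converting spatial interpolation regularity of the initial trace into temporal H\"older regularity of $Au$, is the core of the sufficiency proof and is absent from your plan; Proposition \ref{pr10} and Lemma \ref{le12} live entirely on the $B^\alpha$-scale and cannot supply it.
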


\begin{proof} The necessity of (a) is obvious. 

Moreover, by Lemma \ref{le18}, if $k \in \N_0$ and $k < \beta$, $(B^\alpha u)^{(k)}(0) \in (X, D(A))_{\frac{\beta - k}{\alpha}, \infty}$. As $Au^{(k)}(0) = 0$ if $k < \beta$, we deduce
$$
(B^\alpha u)^{(k)}(0) = f^{(k)}(0).
$$
We conclude that even (b) is necessary. 

We show that (a)-(b) are sufficient. If a strict solution $u$ exists, then 
$$u = Sf = S(f- \sum_{k < \beta} t^k f^{(k)}(0)) + \sum_{k < \beta}S(t^k f^{(k)}(0) )).$$
 Now, $f - \sum_{k < \beta} t^k f^{(k)}(0)$ belongs to the space $\stackrel{o}{C^\beta}([0, T]; X)$. So, by Proposition \ref{pr15}, $S(f- \sum_{k < \beta} t^k f^{(k)}(0)) $ is a strict solution. Moreover, $B^\alpha S(f- \sum_{k < \beta} t^k f^{(k)}(0))$ and $A S(f- \sum_{k < \beta} t^k f^{(k)}(0))$ both belong to $\stackrel{o}{C^\beta}([0, T]; X)$. 
 
 Now we consider $S(f(0))$. By Proposition \ref{pr15} (III), this also is a strict solution. So, from $B^\alpha S(f(0)) = 
A S(f(0))  + f(0)$, in order to reach the conclusion, we have only to show that $A S(f(0))$ belongs to $C^\beta([0, T]; X)$. By (\ref{eq24}), we have
$$S(f(0))(t) = \frac{1}{2\pi i} \int_0^t  (\int_{\Gamma} e^{\mu s} (\mu^\alpha - A)^{-1}f(0) d\mu)   ds. $$
We set, for $t \in (0, T]$, 
\begin{equation}\label{eq31B}
g(t) = \frac{1}{2\pi i} \int_{\Gamma} e^{\mu t} (\mu^\alpha - A)^{-1}f(0) d\mu . 
\end{equation}
We deformate the integration path $\Gamma$, replacing it with $\Gamma_0$, with $\Gamma_0$ describing $\mu \in \C \setminus \{0\} : |Arg(\mu)| = \theta\}$, oriented from $\infty e^{-i\theta}$ to $\infty e^{-i\theta}$, for some $\theta \in (\frac{\pi}{2}, \frac{\pi - \phi_1}{\alpha})$. We have, employing Proposition \ref{pr13}, 
\begin{equation}\label{eq31A}
\begin{array}{c}
\|Ag(t)\| = \|\frac{1}{2\pi i} \int_{\Gamma} e^{\mu t} A(\mu^\alpha - A)^{-1}f(0) d\mu\| \leq C_1 \int_{\Gamma} e^{Re(\mu) t} |\mu|^{-\beta}  |d\mu| \\ \\
\leq C_2 \int_{\R^+} e^{cos(\theta) \rho t} \rho^{-\beta} d\rho = C_3 t^{\beta -1}. 
\end{array}
\end{equation}
We suppose first that $\beta < 1$. If $0 < s < t \leq T$, 
$$
\|A S(f(0))(t) - A S(f(0))(s)\| \leq \int_s^t \|Ag(\tau)\| d\tau \leq C_4 (t^\beta - s^\beta) \leq C_5 (t-s)^\beta. 
$$
So $S(f(0)) \in C^\beta([0, T]; D(A))$. Suppose now that $1 < \beta < \alpha < 2$.
 
If $\beta > 1$, (\ref{eq31A}) implies that $Ag \in C([0, T]; X)$, so that $S(f(0)) \in C^1([0, T]; D(A))$. If $t \in (0, T]$, we have
$$
A g'(t) = \frac{1}{2\pi i} \int_{\Gamma} e^{\mu t} \mu A(\mu^\alpha - A)^{-1}f(0) d\mu 
$$
and, arguing as before, we get 
$$
\|A g'(t)\| \leq C_5 t^{\beta -2},
$$
or
$$
\|(Au)''(t)\| \leq C_5 t^{\beta -2}. 
$$
This implies that $(Au)' \in C^{\beta-1}([0, T]; X)$ and $Au \in C^{\beta}([0, T]; X)$. So in  any case $S(f(0)) \in C^\beta([0, T]; D(A))$. 

We examine $S(t f'(0))$, of course only in case $1 < \beta < \alpha < 2$, assuming $f'(0) \in (X, D(A))_{\frac{\beta - 1}{\alpha}, \infty}$. This also is a strict solution, again by Proposition \ref{pr15} (III), and here also it suffices to
 show that $A S(tf'(0))$ belongs to $C^\beta([0, T]; X)$. By (\ref{eq24}), we have
 $$
 S(t f'(0)(t) = \frac{1}{2\pi i} \int_0^t s (\int_\Gamma e^{\mu(t-s)} (\mu^\alpha - A)^{-1}f'(0) d\mu) ds =  \int_0^t (t-s) (\frac{1}{2\pi i} \int_\Gamma e^{\mu s} (\mu^\alpha - A)^{-1}f'(0) d\mu) ds. 
 $$
 We define $g(t)$ as in (\ref{eq31B}).  Then, arguing as before, we get
 $$
 \|Ag(t)\| \leq C_1 t^{\beta - 2}. 
 $$
 or 
 $$
 \|A S(t f'(0))''(t)\| \leq C_1 t^{\beta - 2}. 
 $$
 We deduce that $A S(t f'(0))' \in C^{\beta - 1}([0, T]; X)$, so that $A S(t f'(0)) \in C^{\beta}([0, T]; X)$. 
\end{proof}
\begin{proposition}\label{pr20}
The conclusions of Propositions \ref{pr15} and \ref{pr19} are still valid if we assume the following:

\medskip
(H) "there exist $\lambda_0 \geq 0$ and $\phi \in (\frac{\alpha \pi}{2}, \pi)$ such that $\{\lambda \in \C \setminus \{\lambda_0\} : |Arg(\lambda - \lambda_0)| < \phi\} \subseteq \rho(A)$.  Moreover, $\forall \epsilon \in (0, \phi)$ there exists $M(\epsilon) \in \R^+$, such that, if $|Arg(\lambda - \lambda_0)| \leq \phi - \epsilon$, $\|(\lambda - \lambda_0)(\lambda - A)^{-1}\|_{\mL(X)} \leq M(\epsilon)$". 

\end{proposition}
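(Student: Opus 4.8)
The plan is to absorb the vertex $\lambda_0$ into \emph{both} operators and thereby reduce to the situation already treated in Propositions~\ref{pr15} and~\ref{pr19}. Set $\tilde A := A - \lambda_0$, so that $D(\tilde A) = D(A)$ with equivalent graph norm; let $\tilde{\mathcal A}$ be the lift of $\tilde A$ to $C([0,T];X)$ as in~(\ref{eq23}); and set $\tilde{\mathcal D} := B^\alpha - \lambda_0$, with $D(\tilde{\mathcal D}) = D(B^\alpha)$. Because $\lambda_0 \geq 0$, hypothesis~(H) says precisely that $-\tilde A$ (hence $-\tilde{\mathcal A}$, by Proposition~\ref{pr13}(II)) is of type $\pi - \phi$ in the sense of Remark~\ref{re13}, and $\pi - \phi \in (0, (1 - \tfrac{\alpha}{2})\pi)$ since $\phi > \tfrac{\alpha\pi}{2}$. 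Moreover $B^\alpha u - Au = f$ is literally the equation $\tilde{\mathcal D} u - \tilde{\mathcal A} u = f$, the notions of strict solution coincide, and the domains, graph norms and real interpolation spaces attached to $\tilde{\mathcal D}, \tilde{\mathcal A}$ agree (up to equivalence) with those attached to $B^\alpha, \mathcal A$; in particular $(X, D(\tilde A))_{\theta,\infty} = (X, D(A))_{\theta,\infty}$ and $(C([0,T];X), D(\tilde{\mathcal D}))_{\theta,\infty} = (C([0,T];X), D(B^\alpha))_{\theta,\infty}$. Thus hypotheses~(a)--(b) of Proposition~\ref{pr19} and all function spaces in the conclusions of both propositions are unaffected by the passage from $(B^\alpha, A)$ to $(\tilde{\mathcal D}, \tilde A)$.

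The key step is to verify that $(\tilde{\mathcal D}, \tilde{\mathcal A})$ satisfies~(A1)--(A5). Items~(A1), (A2) for $-\tilde{\mathcal A}$, and~(A3) are the above remarks; (A5) is inherited, since adding scalars to $B^\alpha$ and to $A$ does not disturb the commutation of resolvents established in the proof of Proposition~\ref{pr15}. What needs an argument is~(A2) for $\tilde{\mathcal D}$ and~(A4), i.e.\ that $\tilde{\mathcal D} = B^\alpha - \lambda_0$ is of type $\tfrac{\alpha\pi}{2}$ with $0 \in \rho(\tilde{\mathcal D})$. Both $\rho(\tilde{\mathcal D}) = \C$ and $0 \in \rho(\tilde{\mathcal D})$ follow at once from $\rho(B^\alpha) = \C$. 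For the resolvent bound I would write $(\lambda - \tilde{\mathcal D})^{-1} = ((\lambda + \lambda_0) - B^\alpha)^{-1}$ and split the sector $\{|\mathrm{Arg}\,\lambda| \geq \tfrac{\alpha\pi}{2} + \epsilon\}$ into a large-modulus part and a bounded part: if $|\lambda|$ is large then $|\lambda + \lambda_0| \geq \tfrac12 |\lambda|$ and $|\mathrm{Arg}(\lambda + \lambda_0)| \geq \tfrac{\alpha\pi}{2} + \tfrac{\epsilon}{2}$ (adding the nonnegative real $\lambda_0$ can only move the argument towards $0$, by an amount that tends to $0$ as $|\lambda| \to \infty$), so the estimate follows from the type-$\tfrac{\alpha\pi}{2}$ property of $B^\alpha$ in Proposition~\ref{pr5}(II); on the remaining compact set $\{\,|\lambda| \leq R,\ |\mathrm{Arg}\,\lambda| \geq \tfrac{\alpha\pi}{2} + \epsilon\,\} \cup \{0\}$, which is contained in $\rho(\tilde{\mathcal D})$, the map $\lambda \mapsto (\lambda - \tilde{\mathcal D})^{-1}$ is continuous hence bounded, giving $\|\lambda(\lambda - \tilde{\mathcal D})^{-1}\| \leq M(\epsilon)$ there as well.

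Granting~(A1)--(A5), Theorem~\ref{th12} yields uniqueness and $u = \tilde S f$, where $\tilde S$ is the operator~(\ref{eq22}) built from $(\tilde{\mathcal D}, \tilde{\mathcal A})$; since $(\mu^\alpha - \lambda_0) - \tilde A = \mu^\alpha - A$, the computation leading to~(\ref{eq24}) shows that $\tilde S$ is represented by the very same formula as $S$ there, with the single proviso that the contour $\Gamma$ must be taken with a positive ``nose'' radius $r_0$, chosen large enough that $\mu^\alpha \in \rho(A)$ for every $\mu$ on $\Gamma$ (unlike in Proposition~\ref{pr15}, one can no longer push $\Gamma$ to the origin, because $\lambda_0$ need not belong to $\rho(A)$). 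The proof of Proposition~\ref{pr15} then goes through word for word with $B^\alpha, A$ replaced by $\tilde{\mathcal D}, \tilde A$, and so does that of Proposition~\ref{pr19}: Lemmas~\ref{le12} and~\ref{le18} concern $B^\alpha$ alone and are used unchanged, and the contour estimates for $g(t)$ in~(\ref{eq31A}) and its differentiated versions still give the bounds $Ct^{\beta-1}$ (resp.\ $Ct^{\beta-2}$) on $(0,T]$, the only new term being a bounded contribution from the nose $\{|\mu| = r_0\}$, harmlessly absorbed since $t^{\beta-1} \geq T^{\beta-1} > 0$ there. Finally one reads the conclusions back via $B^\alpha u = \tilde{\mathcal D} u + \lambda_0 u$ and $Au = \tilde A u + \lambda_0 u$, noting that $u$ itself lies in the relevant space: in the H\"older setting $u \in C^\beta([0,T];X)$ by Propositions~\ref{pr8} and~\ref{pr10} (an element of $D(B^\alpha)$ whose image under $B^\alpha$ is in $C^\beta$ is itself at least $C^\beta$, since $\beta < \alpha$), and in the interpolation setting $u$ is bounded with values in $(X, D(A))_{\theta,\infty}$ by Theorem~\ref{th12}(III) together with $0 \in \rho(B^\alpha)$. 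Hence the conclusions of Propositions~\ref{pr15} and~\ref{pr19} hold verbatim for $B^\alpha u$ and $Au$.

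I expect the only genuinely delicate points to be the uniform type-$\tfrac{\alpha\pi}{2}$ estimate for $B^\alpha - \lambda_0$ down to $\lambda = 0$ (the compactness step above is routine but must be invoked explicitly, since the bare resolvent estimate for $B^\alpha$ degenerates near the origin) and the bookkeeping needed to confirm that keeping the integration contour at a positive distance from $0$ does not damage the H\"older-regularity estimates of Proposition~\ref{pr19}.
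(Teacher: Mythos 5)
Your proposal is correct and follows essentially the same route as the paper: rewrite the equation as $(B^\alpha-\lambda_0)u-(A-\lambda_0)u=f$ and apply Propositions \ref{pr15} and \ref{pr19} to the shifted pair, observing that the relevant interpolation spaces and H\"older classes are unchanged. The paper disposes of this in three lines, so the points you flag as delicate (the uniform type-$\frac{\alpha\pi}{2}$ estimate for $B^\alpha-\lambda_0$ down to $\lambda=0$ via $\rho(B^\alpha)=\C$ and compactness, and keeping the contour at positive distance from the origin) are exactly the details the paper leaves implicit, correctly supplied.
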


\begin{proof}
We write (\ref{eq20}) in the equivalent form

\begin{equation}\label{eq26}
(B^\alpha - \lambda_0)u(t) - (A - \lambda_0)u(t) = f(t), \quad t \in [0, T]. 
\end{equation}
By Proposition \ref{pr5}, $B^\alpha - \lambda_0$ is of type $\frac{\alpha \pi}{2}$  and $-(A - \lambda_0) = \lambda_0 - A$ is of type $\pi - \phi$, for some $M$ positive. The interpolation spaces involved do not change. 
\end{proof}

\begin{remark}\label{re21}
{\rm Suppose that the assumptions of Proposition \ref{pr20} are satisfied. Then the solution $u$ to (\ref{eq26}) can be represented in the form
$$
u = \frac{1}{2\pi i} \int_\gamma (B^\alpha - \lambda_0 - \lambda)^{-1} (\lambda + \lambda_0 - \mA)^{-1} d\lambda = \frac{1}{2\pi i} \int_{\gamma + \lambda_0} (B^\alpha - \lambda)^{-1} (\lambda  - \mA)^{-1} d\lambda
$$
with $\mA$ as in (\ref{eq23}) and $\gamma$ piecewise regular, simple, describing $\{\lambda \in \C \setminus \{0\}: |Arg(\lambda)| = \phi_2, |\lambda| \geq r\} \cup \{\lambda \in \C \setminus \{0\}: |Arg(\lambda)| \leq \phi_2, |\lambda| = r\}$, oriented from $\infty e^{-i\phi_2}$ to 
$\infty e^{i\phi_2}$, with $\frac{\alpha \pi}{2} < \phi_2 <  \phi$, $r \in \R^+$. Next, we can take $(\lambda - B)^{-1}$ in the form (\ref{eq9}), with 
$\Gamma (\lambda) = \Gamma$,  $\Gamma$ piecewise regular, simple, describing $\{\mu \in \C \setminus \{0\}: |Arg(\mu)| = \theta, |\mu| \geq \rho\} \cup \{\mu \in \C \setminus \{0\}: |Arg(\mu)| \leq \theta, |\mu| = \rho\}$, with $\frac{\pi}{2} < \theta < \frac{\phi_2}{\alpha}$, $\rho^\alpha > r$. So we have, for $t \in [0, T]$, 
$$
\begin{array}{c}
u(t) = \frac{1}{(2\pi i)^2} \int_0^t (\int_{\Gamma} (\int_{\gamma + \lambda_0} (\mu^\alpha - \lambda)^{-1} (\lambda - A)^{-1} d\lambda ) e^{\mu(t-s)} d\mu) f(s) ds \\ \\
= \frac{1}{2\pi i} \int_0^t  (\int_{\Gamma} e^{\mu(t-s)} (\mu^\alpha - A)^{-1} d\mu)  f(s) ds. 
\end{array}
$$

}
\end{remark}

Now we consider the case $\alpha \in (1, 2)$. In this case, the Riemann-Liouville derivative of order $\alpha$  does not coincide with $B^\alpha$. In fact by (\ref{eq6}), in this case, the Riemann-Liouville derivative $g$ of $f$ should be
$$
g = D_t^2 (\frac{1}{\Gamma (\alpha -1)} \int_0^t (t-s)^{1-\alpha} f(s) ds) = D_t^2(B^{\alpha -2} f)(t), 
$$
which should hold for $f \in C([0, T]; X)$ such that $B^{\alpha -2} f \in C^2([0, T]; X)$. Now, $B^{\alpha -2} f(0) = 0$, so that this implies that $B^{\alpha -2} f \in D(B)$ and we get
\begin{equation}
g = D_t^2(B^{\alpha -2} f) = D_t(B^{\alpha -1} f). 
\end{equation}
So, if $\alpha \in (1, 2)$, we can define the Riemann-Liouville derivative $\mL ^\alpha f$ of $f$ as follows:
\begin{equation}\label{eq39}
\left\{\begin{array}{l}
D(\mL ^\alpha ):= \{f \in C([0, T]; X) : B^{\alpha -1} f \in C^1([0, T]; X)\}, \\ \\
\mL ^\alpha f = D_t (B^{\alpha -1} f). 
\end{array}
\right. 
\end{equation}
As 
$$C^1([0, T]; X) = D(B) \oplus \{t \to f_0 : f_0 \in X\}$$
from Remark \ref{re9} we deduce 
\begin{equation}\label{eq40}
D(\mL ^\alpha ) = D(B^\alpha) \oplus \{t \to t^{\alpha-1}  f_0 : f_0 \in X\}. 
\end{equation}
 
 Now we study the system
 
 \begin{equation}\label{eq37}
 \left\{\begin{array}{ll}
\mL ^\alpha u(t) - Au(t) = f(t), & t \in [0, T], \\ \\
B^{\alpha - 1}u(0) = g_0,
\end{array}
\right.
 \end{equation}
with $\alpha \in (1, 2)$. The following result holds: 

\begin{theorem}\label{th23} We consider system (\ref{eq37}) with the following assumptions: 

(a) $\alpha \in (1, 2)$; 

(b) there exist $\lambda_0 \geq 0$ and $\phi \in (\frac{\alpha \pi}{2}, \pi)$ such that $\{\lambda \in \C \setminus \{\lambda_0\} : |Arg(\lambda - \lambda_0)| < \phi\} \subseteq \rho(A)$.  Moreover, $\forall \epsilon \in (0, \phi)$ there exists $M(\epsilon) \in \R^+$, such that, if $|Arg(\lambda - \lambda_0)| \leq \phi - \epsilon$, $\|(\lambda - \lambda_0)(\lambda - A)^{-1}\|_{\mL(X)} \leq M(\epsilon)$.

Let $\beta \in (0, \alpha) \setminus \{1, \alpha -1\}$. Then the following conditions are necessary and sufficient, in order that (\ref{eq37}) have a unique solution $u$, such that $\mL ^\alpha u$, $Au$ belong to $C^\beta([0, T]; X)$: 

(I) if $k \in \N_0$ and $k < \beta$, $f^{(k)}(0) \in (X, D(A))_{\frac{\beta - k}{\alpha}, \infty}$; 

(II) if $\beta < \alpha - 1$, $g_0 \in (X, D(A))_{\frac{\beta +1}{\alpha}, \infty}$; 

(III) if $\beta > \alpha - 1$, $g_0 = 0$. 

\end{theorem}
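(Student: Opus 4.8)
The plan is to separate the solution into a ``regular'' part, controlled by Proposition~\ref{pr19}, and a ``singular'' part carrying the datum $g_0$, given by an explicit Mittag--Leffler-type integral. By (\ref{eq40}) every $u\in D(\mL^\alpha)$ is $w+\frac{g_0}{\Gamma(\alpha)}t^{\alpha-1}$ with $w\in D(B^\alpha)$ and $\mL^\alpha u=B^\alpha w$; so I would look for $u=u_f+u_{g_0}$, where $u_f\in D(B^\alpha)$ solves $B^\alpha u_f-Au_f=f$ and $u_{g_0}(t):=\frac{1}{2\pi i}\int_\Gamma e^{\mu t}(\mu^\alpha-A)^{-1}g_0\,d\mu$ solves the problem with $f=0$; here $\Gamma=\Gamma(\theta,\rho)$ is chosen (as in Remark~\ref{re21}) with $\frac\pi2<\theta$, $\alpha\theta<\phi$ and $\rho$ so large that $\mu^\alpha\in\rho(A)$ and $\|(\mu^\alpha-A)^{-1}\|\le C|\mu|^{-\alpha}$ on $\Gamma$ (keeping the shift $\lambda_0$, as in Proposition~\ref{pr20}). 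Uniqueness is immediate: a difference of two solutions solves the homogeneous system with $g_0=0$, hence belongs to $D(B^\alpha)$ by (\ref{eq40}) and is a strict solution of $B^\alpha u-Au=0$, so it vanishes by Proposition~\ref{pr15}(I).

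For the basic properties of $u_{g_0}$ I would apply Proposition~\ref{pr7A} to $F(\mu)=(\mu^\alpha-A)^{-1}g_0$, with its exponent equal to $\alpha-1$ (so $\|F(\mu)\|\le C|\mu|^{-1-(\alpha-1)}$, $\mu^\alpha F(\mu)\to g_0$) and also with exponent $0$ (so $\mu F(\mu)\to 0$): this yields $u_{g_0}(0)=0$, $u_{g_0}\in D(B^{\alpha-1})$, $B^{\alpha-1}u_{g_0}(0)=g_0$, and, differentiating under the integral for $t>0$ and using $\mu^\alpha(\mu^\alpha-A)^{-1}=I+A(\mu^\alpha-A)^{-1}$ with $\frac{1}{2\pi i}\int_\Gamma e^{\mu t}\,d\mu=0$, that $D_t(B^{\alpha-1}u_{g_0})(t)=Au_{g_0}(t)$. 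For sufficiency when $0<\beta<\alpha-1$: assumption (II) and the sectorial form of Proposition~\ref{pr13}(I) give $\|A(\mu^\alpha-A)^{-1}g_0\|\le C|\mu|^{-(\beta+1)}$ on $\Gamma$, and (after noting $\frac{1}{2\pi i}\int_\Gamma A(\mu^\alpha-A)^{-1}g_0\,d\mu=0$ by deformation to infinity) $\|D_t^{\,j}(Au_{g_0})(t)\|\le Ct^{\beta-j}$ for $j=0,1$, using $\beta<1$; as in the proof of Proposition~\ref{pr19} this forces $Au_{g_0}\in\stackrel{o}{\mathcal C}^{\beta}([0,T];X)$ and $B^{\alpha-1}u_{g_0}\in C^1([0,T];X)$, hence $u_{g_0}\in D(\mL^\alpha)$ with $\mL^\alpha u_{g_0}=Au_{g_0}\in\stackrel{o}{\mathcal C}^\beta([0,T];X)$. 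When $\alpha-1<\beta<\alpha$, (III) means $g_0=0$ and $u_{g_0}=0$. In both cases, adding the strict solution $u_f$ supplied by Proposition~\ref{pr19} from condition~(I) produces a solution of (\ref{eq37}) with $\mL^\alpha u,Au\in C^\beta([0,T];X)$.

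The delicate direction is necessity of the conditions on $g_0$. Given a solution $u$ with $g:=\mL^\alpha u\in C^\beta$, one has $f=g-Au\in C^\beta$, $u(0)=0$ (as $w\in D(B^\alpha)\subseteq\stackrel{o}{\mathcal C}^\alpha([0,T];X)$), hence $Au(0)=0$ and $g(0)=f(0)$; writing $w=B^{-\alpha}g$ and using Proposition~\ref{pr10}(II) gives $u(t)=\frac{g_0}{\Gamma(\alpha)}t^{\alpha-1}+\sum_{k=0}^{N}\frac{g^{(k)}(0)}{\Gamma(\alpha+k+1)}t^{\alpha+k}+v(t)$ with $v\in\stackrel{o}{\mathcal C}^{\alpha+\beta}([0,T];X)$, so $\|v(t)\|\le Ct^{\alpha+\beta}$ and $\|u(t)\|\le Ct^{\alpha-1}$. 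When $\beta<\alpha-1$ ($N=0$) I would proceed in the spirit of Lemma~\ref{le12}: the combination $2^\alpha u(t)-u(2t)$ cancels the $t^\alpha$ term, so $g_0=\frac{\Gamma(\alpha)}{2^{\alpha-1}}t^{1-\alpha}[(2^\alpha u(t)-u(2t))-(2^\alpha v(t)-v(2t))]$; taking $y_t:=\frac{\Gamma(\alpha)}{2^{\alpha-1}}t^{1-\alpha}(2^\alpha u(t)-u(2t))\in D(A)$ and the bounds $\|v(t)\|\le Ct^{\alpha+\beta}$, $\|u(t)\|\le Ct^{\alpha-1}$, $\|Au(t)\|\le Ct^{\beta}$, one finds $\|g_0-y_t\|\le Ct^{1+\beta}$, $\|y_t\|_{D(A)}\le Ct^{1-\alpha+\beta}$, so optimizing at $t\sim s^{1/\alpha}$ yields $K(s,g_0)\le Cs^{(\beta+1)/\alpha}$, i.e.\ $g_0\in(X,D(A))_{(\beta+1)/\alpha,\infty}$ by the $K$-method (Proposition~\ref{pr13}(I), as in Lemma~\ref{le18}). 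When $\beta>\alpha-1$: since $\beta-(\alpha-1)>0$ and $Au(0)=0$, $Au\in\stackrel{o}{B}^{\alpha-1}_{\infty,1}([0,T];X)\subseteq D(B^{\alpha-1})$ (Proposition~\ref{pr7}), so the commutation of $B^{\alpha-1}$ with $A$ gives $B^{\alpha-1}u(t)\in D(A)$ and $A(B^{\alpha-1}u)(t)=B^{\alpha-1}(Au)(t)$, which by Proposition~\ref{pr8} lies in $\stackrel{o}{\mathcal C}^{\beta-\alpha+1}([0,T];X)$ (peeling off the Taylor polynomial of $Au$ and using Remark~\ref{re9} if $\beta>1$) and hence tends to $0$; since $B^{\alpha-1}u(t)\to g_0$ and $A$ is closed, $g_0\in D(A)$ with $Ag_0=0$, whence $g_0=0$.

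With (II)/(III) secured, $u_{g_0}$ is a genuine strict solution of the homogeneous system (by the second paragraph), so $\tilde u:=u-u_{g_0}\in D(B^\alpha)$ by (\ref{eq40}) and solves $B^\alpha\tilde u-A\tilde u=f$ with $B^\alpha\tilde u,A\tilde u\in C^\beta([0,T];X)$; being in $D(B^\alpha)\subseteq\stackrel{o}{\mathcal C}^\alpha([0,T];X)$ it satisfies $\tilde u\in C^\beta([0,T];D(A))$ with $\tilde u^{(k)}(0)=0$ for $k\le N$, so Lemma~\ref{le18} gives $f^{(k)}(0)=(B^\alpha\tilde u)^{(k)}(0)\in(X,D(A))_{(\beta-k)/\alpha,\infty}$ for $k<\beta$, which is (I). The main obstacle is precisely the necessity of the $g_0$-conditions: one must use the right auxiliary element --- a Lemma~\ref{le12}-type finite difference rather than $u(t)$ itself --- to make the $K$-functional estimate sharp for $\beta<\alpha-1$, and one must exploit the fractional smoothing $B^{\alpha-1}(Au)\in\stackrel{o}{\mathcal C}^{\beta-\alpha+1}$ together with $AB^{\alpha-1}=B^{\alpha-1}A$ for $\beta>\alpha-1$; once $u_{g_0}$ is recognized as a bona fide solution, reduction to Proposition~\ref{pr19} and Lemma~\ref{le18} (and the verification of the contour identities for $u_{g_0}$) is routine.
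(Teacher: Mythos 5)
Most of your argument runs parallel to the paper's own proof: the decomposition $D(\mL^\alpha)=D(B^\alpha)\oplus\{t\mapsto t^{\alpha-1}f_0\}$ from (\ref{eq40}), the explicit representation $u_{g_0}(t)=\frac{1}{2\pi i}\int_\Gamma e^{\mu t}(\mu^\alpha-A)^{-1}g_0\,d\mu$ analysed through Proposition \ref{pr7A} with exponent $\alpha-1$ (this is exactly the paper's step $(\gamma_3)$), the $K$-functional finite-difference argument for the necessity of (II) when $\beta<\alpha-1$ (the paper's $(\gamma_2)$, with the equivalent combination $2u(t)-2^{1-\alpha}u(2t)$ in place of your $2^\alpha u(t)-u(2t)$), uniqueness via Proposition \ref{pr15}, and the reduction of the conditions on $f$ to Proposition \ref{pr19}/Lemma \ref{le18} after subtracting $u_{g_0}$. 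These parts are sound and essentially coincide with the paper.

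The gap is in your proof that (III) is necessary when $\beta>\alpha-1$. Your closed-operator argument yields $g_0\in D(A)$ with $Ag_0=0$ (from $B^{\alpha-1}u(t)\to g_0$ and $A(B^{\alpha-1}u)(t)=B^{\alpha-1}(Au)(t)\to 0$), and you then write ``whence $g_0=0$''. That last inference needs $A$ to be injective, which is not among the hypotheses: assumption (b) permits $\lambda_0>0$ with $0\in\sigma_p(A)$, and the paper's own main example (the Neumann Laplacian on $C(\overline{\Omega})$) has the constants in its kernel. For such an $A$ and $g_0\in\ker A\setminus\{0\}$, the function $u(t)=\Gamma(\alpha)^{-1}t^{\alpha-1}g_0$ solves (\ref{eq37}) with $f\equiv 0$ and has $\mL^\alpha u\equiv 0$ and $Au\equiv 0$; so no amount of regularity of $Au$ and $\mL^\alpha u$ alone can force $g_0=0$. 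What does force it --- and what the paper uses --- is the H\"older regularity of $u$ itself in $X$: the requirement is to be read as $u\in C^\beta([0,T];D(A))$ (graph norm), hence $u\in C^\beta([0,T];X)$; since $u(0)=0$ gives $\|u(t)\|\le Ct^{\min\{\beta,1\}}$ with $\min\{\beta,1\}>\alpha-1$, while $u(t)=t^{\alpha-1}f_0+O(t^{\alpha})$ near $0$ by Proposition \ref{pr10}, dividing by $t^{\alpha-1}$ and letting $t\to 0$ yields $f_0=0$, i.e.\ $g_0=\Gamma(\alpha)f_0=0$. You should replace the closedness argument by this singularity-matching argument; as written, your proof of the necessity of (III) only establishes $g_0\in\ker A$.
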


\begin{proof} By Proposition (\ref{pr10}), 
$$
\begin{array}{c}
\{u \in D(\mL ^\alpha) : \mL ^\alpha u \in C^\beta([0, T]; X) \} = \{u \in D(B^{\alpha - 1}) : B^{\alpha - 1}u \in C^{1+\beta}([0, T]; X)\} \\ \\
=   \hspace{.1in} \stackrel{o}{\mathcal C}^{\alpha+\beta}([0, T]; X) \oplus \{\sum_{k=0}^{[\beta]+1} t^{k+\alpha-1} f_k : f_k \in X\}
\end{array}
$$
So, if $u$ is a solution with the required regularity,  we have 
\begin{equation}\label{eq38}
u(t) = v(t) + \sum_{k < \beta+1} t^{k+\alpha-1} f_k
\end{equation}
with $v \in$  $\stackrel{o}{\mathcal C}^{\alpha+\beta}([0, T]; X)$  and
$$
B^{\alpha -1}u(t) = B^{\alpha -1}v(t) + \sum_{k < \beta+1} \frac{\Gamma (\alpha + k)}{k!} s^{k} f_k,
$$
implying
$$
g_0 = B^{\alpha -1}u(0) = \Gamma(\alpha) f_0. 
$$
Observe that, if $f_0 \neq 0$, $u$ cannot belong to $C^\gamma ([0, T]; X)$ for every $\gamma > \alpha - 1$. So, if $\beta > \alpha - 1$, if we want $u \in C^\beta ([0, T]; D(A))$ and so $u \in C^\beta ([0, T]; X)$, we need to impose $g_0 = 0$. 
In this case, $u \in D(B^\alpha)$,  $\mL ^\alpha u = B^\alpha u$ and we are reduced to (\ref{eq20}). So Proposition \ref{pr19} is applicable.

We have reached the following conclusion: 

\medskip

{ \it ($\gamma_1$) The claim of Theorem \ref{th23} holds true if $g_0 = 0$ and, in general, if $\beta > \alpha - 1$.}

\medskip

From now on, we assume $\beta < \alpha - 1$. Observe that this implies $\beta < 1$, so that (\ref{eq38}) reduces to
$$
u(t) = v(t) +  t^{\alpha-1} f_0 + t^\alpha f_1. 
$$
Now we show that

\medskip

{ \it ($\gamma_2$) If $\beta < \alpha - 1$ and a solution with the declared regularity exists, $g_0 \in (X, D(A))_{\frac{\beta +1}{\alpha}, \infty}$, $f(0) \in  (X, D(A))_{\frac{\beta}{\alpha}, \infty}$.  }

\medskip

We have already observed that $g_0 = \Gamma(\alpha) f_0$. We have also
$$
(B^{\alpha -1}u)'(t) = B^\alpha v(t) + \Gamma(\alpha + 1) f_1, 
$$
implying
$$
f(0) = (B^{\alpha -1}u)'(0) - Au(0) = \Gamma(\alpha + 1) f_1. 
$$
Therefore, in order to show $(\gamma_2)$, we can show that, for $k = 0, 1$, $f_k \in (X, D(A))_{\frac{\beta +1-k}{\alpha}, \infty}$. This can be done employing arguments similar to those in the proof of Lemma \ref{le18}: we have, for $t \in (0, \frac{T}{2}]$, 
$$
\begin{array}{c}
f_0 = t^{1-\alpha} [2u(t) - 2^{1-\alpha} u(2t)] - t^{1-\alpha} [2v(t) - 2^{1-\alpha} v(2t)], \\ \\
f_1 = t^{-\alpha} [2^{1-\alpha} u(2t) - u(t)] - t^{-\alpha} [2^{1-\alpha} v(2t) - v(t)]. 
\end{array}
$$
So we set
$$
\begin{array}{c}
w_0(t)  = t^{1-\alpha} [2u(t) - 2^{1-\alpha} u(2t)], \\ \\
w_1(t) = t^{-\alpha} [2^{1-\alpha} u(2t) - u(t)], 
\end{array}
$$
deducing, as $v \in$  $\stackrel{o}{\mathcal C}^{\alpha+\beta}([0, T]; X)$, for $k \in \{0, 1\}$, 
$$
\|f_k - w_k(t)\| \leq C_k t^{\beta + 1 -k}, \|w_k(t)\|_{D(A)} \leq C_k t^{\beta - \alpha + 1 - k}. 
$$
So, setting 
$$
z_k(s) := \left\{\begin{array}{lll}
w_k(s^{1/\alpha}) & {\rm if } & 0 < s \leq  (\frac{T}{2})^\alpha, \\ \\
0  & {\rm if } & s > (\frac{T}{2})^\alpha , 
\end{array}
\right. 
$$
we deduce
$$
K(s, f_k) \leq 2 C_k s^{\frac{\beta+1-k}{\alpha}}.
$$

\medskip

{ \it ($\gamma_3$) Let $\alpha \in (1, 2)$, $\beta \in (0, \alpha - 1)$. Consider  system (\ref{eq37}), with $f \equiv 0$, $g_0 \in (X, D(A))_{\frac{\beta +1}{\alpha}, \infty}$. Then there exists a  solution $u$ such that $\mL^\alpha u$, $Au$ belong to 
$C^\beta([0, T]; X)$.  }

\medskip

We can try to draw a formula for a possible solution employing the Laplace transform  and Proposition \ref{pr7A}: it should be (at least formally, indicating with $\tilde u$ la Laplace transform of $u$)
$$
0 = \lambda \tilde {B^{\alpha-1}u}(\lambda) - B^{\alpha-1}u(0) - A \tilde u(\lambda) = \lambda^\alpha \tilde u (\lambda) - g_0 - A \tilde u(\lambda), 
$$
hence
$$
\tilde u(\lambda) = (\lambda^\alpha - A)^{-1} g_0. 
$$
The inverse formula of the Laplace transform takes to 
$$
u(t) = \frac{1}{2\pi i} \int_\Gamma e^{\lambda t} (\lambda^\alpha - A)^{-1} g_0 d\lambda, 
$$
with $\Gamma$ piecewise regular contour describing 
$$\{\lambda \in \C : |\lambda| \geq R, |Arg(\lambda)| = \theta\} \cup \{\lambda \in \C : |\lambda| = R, |Arg(\lambda)| \leq  \theta\},$$
oriented from $\infty e^{-i\theta}$ to $\infty e^{i\theta}$, with $\theta \in (\frac{\pi}{2}, \frac{\phi}{\alpha} )$, $R \in \R^+$ such that $\lambda^\alpha \in \rho(A)$ if $\lambda \in \Gamma$. Then we can apply Proposition \ref{pr7A} to $u$, replacing 
$\alpha$ with $\alpha-1$, and observing that $g_0 \in (X, D(A))_{\frac{\beta +1}{\alpha}, \infty}$ implies $g_0 \in \overline{D(A)}$, so that in $\{\lambda \in \C : |\lambda| \geq R,  |Arg(\lambda)| \leq \theta\}$
$$
{\displaystyle \lim_{|\lambda| \to \infty}} \lambda^\alpha (\lambda^\alpha - A)^{-1} g_0 = g_0. 
$$
We deduce that $u \in D(B^{\alpha -1})$, $B^{\alpha -1}u(0) = g_0$ and, if $t \in (0, T]$, 
$$
B^{\alpha -1}u(t) = \frac{1}{2\pi i} \int_\Gamma e^{\lambda t} \lambda^{\alpha -1} (\lambda^\alpha - A)^{-1} g_0 d\lambda
$$
$$
(B^{\alpha -1}u)'(t) = \frac{1}{2\pi i} \int_\Gamma e^{\lambda t} \lambda^{\alpha} (\lambda^\alpha - A)^{-1} g_0 d\lambda = \frac{1}{2\pi i} \int_\Gamma e^{\lambda t} A(\lambda^\alpha - A)^{-1} g_0 d\lambda = Au(t). 
$$
We observe that, as $g_0 \in (X, D(A))_{\frac{\beta +1}{\alpha}, \infty}$, 
$$
\|A(\lambda^\alpha - A)^{-1} g_0\| \leq const |\lambda|^{-\beta - 1}. 
$$
Another application of Proposition \ref{pr7A} guarantees that $B^{\alpha-1} u \in C^1([0, T]; X)$ and $u \in C([0, T]; D(A))$ and $u$ is really a solution of (\ref{eq37}) if $f \equiv 0$. It remains only to show that $(B^{\alpha-1} u)'$ and $Au$  belong 
to $C^\beta([0, T]; X)$. We follow an argument already used: if $t \in (0, T]$, 
$$
(Au)'(t) = \frac{1}{2\pi i} \int_\Gamma e^{\lambda t} \lambda A(\lambda^\alpha - A)^{-1} g_0 d\lambda.
$$
The estimate
$$
\|\lambda A(\lambda^\alpha - A)^{-1} g_0 \| \leq const |\lambda|^{-\beta}
$$
implies 
$$
\|(Au)'(t) \| \leq const \cdot t^{\beta -1}, 
$$
implying $Au \in C^\beta([0, T]; X)$. 

\medskip

{\it ($\gamma_4$) Proof of the general statement.}

\medskip

The case $g_0 = 0$ or $\beta > \alpha - 1$ follows from $(\gamma_1)$. Let us consider the case $\beta < \alpha - 1$. The uniqueness of a solution with the stated regularity again follows from $(\gamma_1)$. Concerning the existence, the necessity of conditions (I)-(II) follows from $(\gamma_2)$. Suppose that $f(0) \in (X, D(A))_{\frac{\beta}{\alpha},\infty}$ and $g_0 \in (X, D(A))_{\frac{\beta+1}{\alpha},\infty}$. By $(\gamma_3)$, in case $f \equiv 0$, a solution $u_1$ with the stated regularity exists. Take as new unknown $u_2:= u - u_1$. This should solve (\ref{eq37}) with $g_0 = 0$. Another application of $(\gamma_1)$ shows the existence and proper regularity of $u_2$. 

\end{proof}

Now we introduce the {\bf Caputo's derivative of order $\alpha$} $C^\alpha$, again in case $\alpha \in \R^+ \setminus \N$. Let $m = [\alpha]$, so that $m < \alpha < m+1$. If $u \in C^{m+1}([0, T]; X)$, $C^\alpha u$ is defined as
\begin{equation}\label{eq27}
\begin{array}{c}
C^\alpha u(t) := \frac{1}{\Gamma (m+1-\alpha)} \int_0^t (t-s)^{m-\alpha}u^{(m+1)}(s) ds \\ \\
=  [B^{\alpha - (m+1)} u^{(m+1)}] (t) = B^{\alpha - (m+1)}(u - \sum_{k=0}^m \frac{t^k}{k!} u^{(k)}(0))^{(m+1)} \\ \\
= B^\alpha (u - \sum_{k=0}^m \frac{t^k}{k!} u^{(k)}(0)), 
\end{array}
\end{equation}
as $u - \sum_{k=0}^m \frac{t^k}{k!} u^{(k)}(0) \in D(B^{m+1})$. This suggests the following 

\begin{definition}\label{de22}
Let $\alpha \in \R^+ \setminus \N$, $m = [\alpha]$, $u \in C^m([0, T]; X)$. We shall say that $u \in D(C^\alpha)$ if $u -  \sum_{k=0}^m \frac{t^k}{k!} u^{(k)}(0) \in D(B^\alpha)$. In this case, we set 
$$C^\alpha u := B^\alpha (u -  \sum_{k=0}^m \frac{t^k}{k!} u^{(k)}(0)). $$
\end{definition}

We want to extend  the results concerning equation (\ref{eq20}) to the system

\begin{equation}\label{eq28}
\left\{\begin{array}{ll}
C^\alpha u(t) - Au(t) = f(t), & t \in [0, T], \\ \\
u^{(k)}(0) = u_k, \quad 0 \leq k \leq [\alpha]. 
\end{array}
\right.
\end{equation}

Of course, a {\bf strict solution } to (\ref{eq28}) is an element of $D(C^\alpha) \cap C([0, T]; D(A))$ satisfying pointwise all conditions in (\ref{eq28}). 

The case $\alpha \in (0, 1)$ is quite simple: 

\begin{proposition}\label{pr23}\label{pr23}
Let $\alpha \in (0, 1)$. Consider system (\ref{eq28}),supposing that $\lambda_0 -A$ is of type $\phi$, for some  $\phi \in (0, (1 - \frac{\alpha}{2})\pi)$, $\lambda_0 \geq 0$; then:

(I) (\ref{eq28}) has, at most, one strict solution, for every $f \in C([0, T]; X)$, $u_0 \in D(A)$. 

(II) Let $\theta \in (0, 1)$. Then necessary and sufficient conditions implyng that (\ref{eq28}) has a strict solution $u$ such that $C^\alpha u$ and $Au$ are bounded with values in $(X, D(A))_{\theta,\infty}$ are : $u_0 \in D(A)$, $Au_0 \in (X, D(A))_{\theta,\infty}$, $f \in C([0, T]; X) \cap B([0, T]; (X, D(A))_{\theta,\infty})$. 

(III)  Let $\beta \in (0, \alpha)$. Then the following conditions are necessary and sufficient, in order that (\ref{eq28}) has a strict solution $u$ such that $C^\alpha u$, $Au$ belong to $C^\beta([0, T]; X)$: $u_0 \in D(A)$, $Au_0 + f(0) \in (X, D(A))_{\beta/\alpha, \infty}$, $f \in C^\beta([0, T]; X)$. 
\end{proposition}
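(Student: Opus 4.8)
The plan is to reduce the Caputo problem (\ref{eq28}) for $\alpha\in(0,1)$ to the Riemann--Liouville problem (\ref{eq20}) already analyzed in Propositions \ref{pr15} and \ref{pr19}, by subtracting off the initial datum. Concretely, given $u_0\in D(A)$, set $w(t):=u(t)-u_0$. Then $w(0)=0$, and by Definition \ref{de22} we have $C^\alpha u=B^\alpha w$, since $[\alpha]=0$. The system (\ref{eq28}) becomes
$$
B^\alpha w(t)-Aw(t)=f(t)+Au_0=:\tilde f(t),\qquad w(0)=0,\quad w(t)\in D(A),
$$
and a strict solution $u$ of (\ref{eq28}) corresponds exactly to a strict solution $w$ of (\ref{eq20}) with right-hand side $\tilde f$. (The translation $\lambda_0-A$ of type $\phi$ is handled exactly as in Proposition \ref{pr20}, so I may assume $\lambda_0=0$ and $-A$ of type $\phi$ throughout; the interpolation spaces are unchanged.) Note $w$ automatically lies in $D(B^\alpha)$ whenever it is a strict solution, since $B^\alpha w=Aw+\tilde f\in C([0,T];X)$ and $w(0)=0$ put $w$ in the range of $B^{-\alpha}$; and $C^\alpha u\in C^\beta$ (resp. bounded into $(X,D(A))_{\theta,\infty}$) iff $B^\alpha w$ is, and likewise $Au\in C^\beta$ iff $Aw\in C^\beta$ because $Au=Aw+Au_0$ differs by a constant.

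With this reduction in hand, each part follows. For (I), uniqueness of $u$ is equivalent to uniqueness of $w$, which is Proposition \ref{pr15}(I) (applied to the translated operator). For (II), the condition ``$C^\alpha u$, $Au$ bounded into $(X,D(A))_{\theta,\infty}$'' translates to ``$B^\alpha w$, $Aw$ bounded into $(X,D(A))_{\theta,\infty}$''; by Proposition \ref{pr15}(III) this holds once $\tilde f\in C([0,T];X)\cap B([0,T];(X,D(A))_{\theta,\infty})$, and since $Au_0$ is a constant, this is equivalent to $f\in C([0,T];X)\cap B([0,T];(X,D(A))_{\theta,\infty})$ together with $Au_0\in(X,D(A))_{\theta,\infty}$; of course $u_0\in D(A)$ is needed just to make sense of $Au_0$. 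The necessity direction uses that $Aw$ bounded into $(X,D(A))_{\theta,\infty}$ forces, by continuity and the value at $t=0$, that $Aw(0)=0$, whence $\tilde f(0)=B^\alpha w(0)-Aw(0)$; one reads off $Au_0\in(X,D(A))_{\theta,\infty}$ from $\tilde f(0)=f(0)+Au_0$ and the fact that $\tilde f$ takes values in that space. For (III), the translation gives $B^\alpha w,Aw\in C^\beta([0,T];X)$; by Proposition \ref{pr19} (with $\beta\in(0,\alpha)$, and automatically $\beta<1$ here since $\alpha<1$, so the excluded value $\beta=1$ never intervenes) the necessary and sufficient conditions are $\tilde f\in C^\beta([0,T];X)$ and $\tilde f(0)\in(X,D(A))_{\beta/\alpha,\infty}$ (only the $k=0$ term appears). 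Rewriting $\tilde f=f+Au_0$: the first is $f\in C^\beta([0,T];X)$ (a constant shift does not affect Hölder seminorms), and the second is $Au_0+f(0)\in(X,D(A))_{\beta/\alpha,\infty}$; again $u_0\in D(A)$ is the standing hypothesis making $Au_0$ meaningful.

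The only genuinely delicate point is the bookkeeping around the initial conditions: one must check that a strict solution of the transformed Riemann--Liouville equation, which a priori only satisfies $w\in D(B^\alpha)$, does produce a strict solution of the Caputo problem in the sense of Definition \ref{de22} — i.e. that $u=w+u_0\in C^m([0,T];X)=C([0,T];X)$ (immediate), that $u-u^{(0)}(0)=w\in D(B^\alpha)$ (this is exactly the defining condition), and that the pointwise identity and the initial condition $u(0)=u_0$ hold (both immediate from $w(0)=0$). Conversely, from a Caputo strict solution one recovers $w$ with $w(0)=0$ and $w\in D(B^\alpha)$, i.e. a Riemann--Liouville strict solution. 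I expect the write-up to be short: essentially a paragraph setting up $w=u-u_0$, a sentence invoking Proposition \ref{pr20} to absorb $\lambda_0$, and three short paragraphs quoting Propositions \ref{pr15}(I), \ref{pr15}(III), and \ref{pr19} for (I), (II), (III) respectively, each time noting that the shift by the constant $Au_0$ changes neither the relevant function-space membership nor the interpolation conditions except through the value at $t=0$.
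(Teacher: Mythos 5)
Your proposal is correct and follows essentially the same route as the paper: the paper's proof also sets $v(t):=u(t)-u_0$, observes that $C^\alpha u=B^\alpha v$ and that $v$ solves $B^\alpha v-Av=f+Au_0$ (its equation (\ref{eq29})), and then quotes Proposition \ref{pr15}(I), \ref{pr15}(III) and Proposition \ref{pr19} for the three parts, exactly as you do. Your extra bookkeeping (absorbing $\lambda_0$ via Proposition \ref{pr20}, checking that the constant shift by $Au_0$ only affects the value at $t=0$) is consistent with, and slightly more explicit than, the paper's write-up.
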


\begin{proof} (I) Suppose that $u_0 = 0$, $f \equiv 0$. Then (\ref{eq28}) is equivalent to 
$$
\begin{array}{ll}
B^\alpha u(t) - Au(t) = 0, & t \in [0, T],
\end{array}
$$
which, by Proposition \ref{pr15}(I), has only the trivial solution. 

(II) The necessity of these three conditions is clear. In order to show that they are sufficient, we have only to observe that $v(t):= u(t) - u_0$ should belong to $D(B^\alpha) \cap C([0, T]; D(A))$ and $B^\alpha v = C^\alpha u$ and $Av = Au - Au_0$ should 
be bounded with values in $(X, D(A))_{\theta,\infty}$. Moreover, $v$ should solve the equation
\begin{equation}\label{eq29}
B^\alpha v(t) - Av(t) = f(t) + Au_0,  \quad t \in [0, T]. 
\end{equation}
By Proposition \ref{pr15} (III), this implies that $f \in B([0, T]; (X, D(A))_{\theta,\infty})$. On the other hand, if the three conditions are satisfied, (\ref{eq29}) has a unique solution $v$ belonging to $D(B^\alpha) \cap C([0, T]; D(A))$, with $B^\alpha v, Av$ bounded with values in $(X, D(A))_{\theta,\infty}$. Clearly, if $u(t):= v(t) + u_0$, $u$ is a solution to (\ref{eq28}) with the desired properties. 

(III) Suppose a solution $u$ with the declared regularity exists. Clearly, it is necessary that $f \in C^\beta([0, T]; X)$ and $u_0 \in D(A)$. Set $v(t):= u(t) - u_0$. Then $v$ belongs to $D(B^\alpha) \cap C([0, T]; D(A))$, $B^\alpha v = C^\alpha u$ and $Av = Au - Au_0$ belong to $C^\beta([0, T]; X)$, and $v$ solves (\ref{eq29}). By Proposition \ref{pr19}, $Au_0 + f(0)$ belongs to $(X, D(A))_{\beta/\alpha, \infty}$. On the other hand, if these three conditions are satisfied, (\ref{eq29}) has a unique solution $v$ 
with $B^\alpha v$, $Av$ in $C^\beta([0, T]; X)$. It follows that $u(t):= v(t) + u_0$ is a solution to (\ref{eq28}) with the desired properties. 
\end{proof}

Now we consider the case $\alpha \in (1, 2)$. We begin with the following

\begin{lemma}\label{le27}
Let $\alpha \in (1, 2)$, let $Y$ be a Banach space, $A$ a linear operator in $Y$ satisfying (H) in Proposition \ref{pr20}. Let $\Gamma$ be a piecewise regular contour describing 
$$\{\lambda \in \C : |\lambda| \geq R, |Arg(\lambda)| = \theta\} \cup \{\lambda \in \C : |\lambda| = R, |Arg(\lambda)| \leq  \theta\},$$
oriented from $\infty e^{-i\theta}$ to $\infty e^{i\theta}$, with $\theta \in (\frac{\pi}{2}, \frac{\phi}{\alpha} )$, $R \in \R^+$ such that $\lambda^\alpha \in \rho(A)$ if $\lambda \in \Gamma$. We set, for $t \in (0, T]$, 
\begin{equation}\label{eq42A}
u(t) = \frac{1}{2\pi i} \int_\Gamma e^{\lambda t} \lambda^{\alpha -2} (\lambda^\alpha - A)^{-1} u_1 d\lambda. 
\end{equation}
Suppose that $u_1$ belongs to the closure of $D(A)$ in the interpolation space $(Y, D(A))_{1- \frac{1}{\alpha}, \infty}$. 

Then $u$ is a strict solution to (\ref{eq28}) if $f \equiv 0$, $u_0 = 0$. 

\end{lemma}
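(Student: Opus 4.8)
The plan is to read $u$ from (\ref{eq42A}) as a Laplace-type contour integral and to invoke Proposition \ref{pr7A} three times, with the exponents $0$, $1$ and $\alpha$. Since, by Cauchy's theorem, the value of the integral (\ref{eq42A}) does not change under an admissible deformation of $\Gamma$, I may first enlarge $R$ so that two estimates hold on $\Gamma$: fixing $\theta_0\in(\theta,\phi/\alpha)$, for $|Arg(\lambda)|\le\theta_0$ and $|\lambda|$ large the point $\lambda^\alpha$ lies in the sector of hypothesis (H), whence $\|(\lambda^\alpha-A)^{-1}\|_{\mL(Y)}\le C|\lambda|^{-\alpha}$; and, since $u_1\in(Y,D(A))_{1-1/\alpha,\infty}$, the characterization of this interpolation space (Proposition \ref{pr13}(I) applied to $\lambda_0-A$, whose graph norm is equivalent to that of $A$, extended to complex arguments in the sector of (H)) yields $\|A(\lambda^\alpha-A)^{-1}u_1\|\le C|\lambda|^{1-\alpha}$ there. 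Then $F(\lambda):=\lambda^{\alpha-2}(\lambda^\alpha-A)^{-1}u_1$ is holomorphic on $\{|\lambda|>R,\ |Arg(\lambda)|<\theta_0\}$, satisfies $\|F(\lambda)\|\le C|\lambda|^{-2}$, and $u(t)=\frac{1}{2\pi i}\int_\Gamma e^{\lambda t}F(\lambda)\,d\lambda$.

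I would then apply Proposition \ref{pr7A} to $F$, first with exponent $0$ (here $\lambda F(\lambda)=\lambda^{\alpha-1}(\lambda^\alpha-A)^{-1}u_1\to0$), obtaining $u\in C([0,T];X)$ with $u(0)=0$, and next with exponent $1$: condition (c) now reads $\lambda^2F(\lambda)=\lambda^\alpha(\lambda^\alpha-A)^{-1}u_1\to u_1$, which holds because $u_1\in\overline{D(A)}$ (a consequence of the hypothesis, as $(Y,D(A))_{1-1/\alpha,\infty}\hookrightarrow Y$). This gives $u\in D(B)=\{w\in C^1([0,T];X):w(0)=0\}$ and $u'(0)=B^1u(0)=u_1$; in particular $u\in C^1([0,T];X)$ and the initial conditions of (\ref{eq28}) hold with $u_0=0$.

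For the equation I would set $v:=u-tu_1$. Since $[\alpha]=1$ and $u\in C^1([0,T];X)$, by Definition \ref{de22} it is enough to show $v\in D(B^\alpha)$, and then $C^\alpha u=B^\alpha v$. Using Remark \ref{re9} with $\delta=1$ (after deforming the contour there onto $\Gamma$) and the identity $\lambda^{\alpha-2}(\lambda^\alpha-A)^{-1}-\lambda^{-2}=\lambda^{-2}A(\lambda^\alpha-A)^{-1}$, one rewrites
$$
v(t)=\frac{1}{2\pi i}\int_\Gamma e^{\lambda t}\,\lambda^{-2}A(\lambda^\alpha-A)^{-1}u_1\,d\lambda=:\frac{1}{2\pi i}\int_\Gamma e^{\lambda t}\Phi(\lambda)\,d\lambda .
$$
By the second estimate above $\|\Phi(\lambda)\|\le C|\lambda|^{-1-\alpha}$, so Proposition \ref{pr7A} applies with exponent $\alpha$ provided $\lambda^{1+\alpha}\Phi(\lambda)=\lambda^{\alpha-1}A(\lambda^\alpha-A)^{-1}u_1$ has a limit as $|\lambda|\to\infty$. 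This is the heart of the matter and the only place where the \emph{closure} hypothesis is really used: given $\eta>0$, pick $b\in D(A)$ with $\|u_1-b\|_{(Y,D(A))_{1-1/\alpha,\infty}}<\eta$; then $\lambda^{\alpha-1}A(\lambda^\alpha-A)^{-1}b=\lambda^{\alpha-1}(\lambda^\alpha-A)^{-1}Ab=O(|\lambda|^{-1})\to0$, while $\|\lambda^{\alpha-1}A(\lambda^\alpha-A)^{-1}(u_1-b)\|\le C\eta$, so the limit is $0$. Proposition \ref{pr7A} then gives $v\in D(B^\alpha)$ with $B^\alpha v(t)=\frac{1}{2\pi i}\int_\Gamma e^{\lambda t}\lambda^{\alpha-2}A(\lambda^\alpha-A)^{-1}u_1\,d\lambda$ for $t\in(0,T]$ and $B^\alpha v(0)=0$; hence $u\in D(C^\alpha)$, $C^\alpha u=B^\alpha v\in C([0,T];X)$ and $C^\alpha u(0)=0$.

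It remains to move the closed operator $A$ under the integral sign in (\ref{eq42A}); this is licit for $t>0$, since $\int_\Gamma e^{\lambda t}\|F(\lambda)\|\,|d\lambda|$ and $\int_\Gamma e^{\lambda t}\|AF(\lambda)\|\,|d\lambda|$ both converge (the exponential beats the bound $\|AF(\lambda)\|\le C|\lambda|^{-1}$). One gets $u(t)\in D(A)$ and $Au(t)=\frac{1}{2\pi i}\int_\Gamma e^{\lambda t}\lambda^{\alpha-2}A(\lambda^\alpha-A)^{-1}u_1\,d\lambda=C^\alpha u(t)$ for $t\in(0,T]$, so $C^\alpha u(t)-Au(t)=0$ there. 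Since $C^\alpha u\in C([0,T];X)$ with $C^\alpha u(0)=0$ and $u(t)\to u(0)=0$ in $Y$, closedness of $A$ forces $0\in D(A)$, $Au(0)=0$ and $u\in C([0,T];D(A))$; thus the equation and the initial conditions hold at $t=0$ as well, and $u$ is a strict solution of (\ref{eq28}) with $f\equiv0$, $u_0=0$. I expect the limit computation in the last part of the third step to be the only real obstacle; the rest is routine verification of the hypotheses of Proposition \ref{pr7A} together with bookkeeping on the contour integrals.
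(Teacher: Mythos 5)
Your proof is correct and follows essentially the same route as the paper's: apply Proposition \ref{pr7A} to get $u\in D(B)$ with $u(0)=0$, $u'(0)=u_1$, rewrite $u-tu_1$ as the contour integral of $\lambda^{-2}A(\lambda^\alpha-A)^{-1}u_1$, and use the closure hypothesis to get the vanishing limit needed for a second application of Proposition \ref{pr7A} with exponent $\alpha$. Your write-up is in fact slightly more careful than the paper's on two points it leaves implicit: the explicit $\eta$-approximation argument showing the limit is $0$ for $u_1$ in the closure (the paper only verifies it for $u_1\in D(A)$), and the justification for passing $A$ under the integral sign.
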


\begin{proof} We have 
$$
\|\lambda^{\alpha -2} (\lambda^\alpha - A)^{-1} u_1\|_Y \leq const |\lambda|^{-2}
$$
in $\{\lambda \in \C : |\lambda| \geq R, |Arg(\lambda)| \leq \theta\}$. Moreover, as $u_1 \in (Y, D(A))_{1- \frac{1}{\alpha}, \infty} \subseteq \overline{D(A)}$, we have (in the same set)
$$
{\displaystyle \lim_{|\lambda| \to \infty}} \lambda^2  (\lambda^{\alpha -2} (\lambda^\alpha - A)^{-1} u_1) = {\displaystyle \lim_{|\lambda| \to \infty}}   \lambda^{\alpha} (\lambda^\alpha - A)^{-1} u_1 = u_1. 
$$
We deduce from Proposition \ref{pr7A} that $u \in D(B)$, and so $u(0) = 0$, $u'(0) = Bu(0) = u_1$. Moreover, 
$$
u(t) - tu'(0) = u(t) - tu_1 = \frac{1}{2\pi i} \int_\Gamma e^{\lambda t} \lambda^{-2} [\lambda^\alpha (\lambda^\alpha - A)^{-1} - 1] u_1 d\lambda = \frac{1}{2\pi i} \int_\Gamma e^{\lambda t} \lambda^{-2} A (\lambda^\alpha - A)^{-1} u_1 d\lambda. 
$$
We have
$$
\|\lambda^{-2} A (\lambda^\alpha - A)^{-1} u_1\| \leq const |\lambda|^{-1 - \alpha}.
$$
If $u_1 \in D(A)$, 
$$
\|\lambda^{1+\alpha}(\lambda^{-2} A (\lambda^\alpha - A)^{-1} u_1)\| \leq const |\lambda|^{-1} \to 0 \quad (|\lambda| \to \infty). 
$$
We conclude that 
$$
{\displaystyle \lim_{|\lambda| \to \infty}} \lambda^{1+\alpha}(\lambda^{-2} A (\lambda^\alpha - A)^{-1} u_1) = 0. 
$$
So, by Proposition \ref{pr7A}, $u  \in D(C^\alpha)$ and, if $t \in (0, T]$,
$$
C^\alpha u(t) = \frac{1}{2\pi i} \int_\Gamma e^{\lambda t} \lambda^{\alpha -2} A (\lambda^\alpha - A)^{-1} u_1 d\lambda = Au(t). 
$$
Of course, the previous arguments imply also that $C^\alpha u(0) = 0 = Au(0)$.

\end{proof}

\begin{remark}\label{re28}
{\rm With reference to Lemma \ref{le27}, in case $u_1 \in (Y, D(A))_{1- \frac{1}{\alpha}, \infty}$, we can only say that the function $t \to \frac{1}{2\pi i} \int_\Gamma e^{\lambda t} \lambda^{\alpha -2} A (\lambda^\alpha - A)^{-1} u_1 d\lambda$ is bounded with values in $Y$. 
By the way, it is known that, if $u_1 \in (Y, D(A))_{\theta, \infty}$ for some $\theta > 1- \frac{1}{\alpha}$, it belongs to the closure of $D(A)$ in $(Y, D(A))_{1- \frac{1}{\alpha}, \infty}$. 

}
\end{remark}

\begin{theorem} 
Let $\alpha \in (1, 2)$, $A$ a linear operator in $X$ satisfying (H) in Proposition \ref{pr20}. Let $\theta \in (0, 1) \setminus \{\frac{1}{\alpha}\}$. Consider system (\ref{eq28}). Then the following conditions are necessary and sufficient, in order that there exists a unique strict solution 
$u$ such that $C^\alpha u$, $Au$ are bounded with values in $(X, D(A))_{\theta,\infty}$: 

(a) $f \in C([0, T]; X) \cap B([0, T]; (X, D(A))_{\theta,\infty})$; 

(b) $u_0 \in D(A)$, $Au_0 \in (X, D(A))_{\theta,\infty}$; 

(c) if $\theta < \frac{1}{\alpha}$, $u_1$ belongs to the interpolation space $(X, D(A))_{\theta+1 - \frac{1}{\alpha}}$; if $\frac{1}{\alpha} < \theta < 1$, $u_1 \in D(A)$ and $Au_1 \in (X, D(A))_{\theta- \frac{1}{\alpha},\infty}$. 
\end{theorem}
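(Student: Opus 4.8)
The plan is to exploit that $[\alpha]=1$, so that the data in (\ref{eq28}) are $u_0,u_1$, and to look for a strict solution in the form $u=u^{(1)}+u^{(2)}+u^{(3)}$, where $u^{(3)}$ solves (\ref{eq28}) with the given $f$ and $u_0=u_1=0$, $u^{(2)}$ with $f\equiv 0$, $u_1=0$ and the given $u_0$, and $u^{(1)}$ with $f\equiv 0$, $u_0=0$ and the given $u_1$. For $u^{(3)}$ the conditions $u^{(3)}(0)=(u^{(3)})'(0)=0$ force $u^{(3)}\in D(B^\alpha)$, so that $C^\alpha u^{(3)}=B^\alpha u^{(3)}$ and $u^{(3)}$ is precisely a strict solution of $B^\alpha u^{(3)}-Au^{(3)}=f$; by Proposition~\ref{pr15}(III) combined with Proposition~\ref{pr20} this has a (unique) strict solution with $C^\alpha u^{(3)},Au^{(3)}$ bounded with values in $(X,D(A))_{\theta,\infty}$ exactly when $f\in C([0,T];X)\cap B([0,T];(X,D(A))_{\theta,\infty})$, i.e. under (a). For $u^{(2)}$ one sets $v:=u^{(2)}-u_0\in D(B^\alpha)$, which must solve $B^\alpha v-Av=Au_0$; since the right-hand side is the constant $Au_0$, Propositions~\ref{pr15}(III)/\ref{pr20} again yield the desired regularity exactly when $u_0\in D(A)$ and $Au_0\in(X,D(A))_{\theta,\infty}$, i.e. under (b). Uniqueness of the assembled solution is inherited from the uniqueness parts of Propositions~\ref{pr15}/\ref{pr20}.

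The genuinely new input is $u^{(1)}$. Here condition (c) places $u_1$ in the closure of $D(A)$ in $(X,D(A))_{1-\frac1\alpha,\infty}$ (for $\theta<\frac1\alpha$ because $\theta+1-\frac1\alpha>1-\frac1\alpha$ and Remark~\ref{re28}; for $\theta>\frac1\alpha$ because then $u_1\in D(A)$ outright), so Lemma~\ref{le27} applies: $u^{(1)}$ is the function (\ref{eq42A}), with $C^\alpha u^{(1)}(t)=Au^{(1)}(t)=\frac{1}{2\pi i}\int_\Gamma e^{\lambda t}\lambda^{\alpha-2}A(\lambda^\alpha-A)^{-1}u_1\,d\lambda$. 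It then remains to show this is bounded on $(0,T]$ with values in $(X,D(A))_{\theta,\infty}$. Using the resolvent description of that norm (Proposition~\ref{pr13}(I)) one bounds $\sigma^\theta\|A(\sigma-A)^{-1}Au^{(1)}(t)\|$ uniformly in $\sigma>0$ and $t\in(0,T]$: moving $A(\sigma-A)^{-1}$ inside the contour integral and estimating, in the two regimes of (c) separately, handles $t$ bounded away from $0$; near $t=0$ the direct bound diverges, and one must use the cancellation in (\ref{eq42A}) --- rescaling $\lambda\mapsto\lambda/t$ and subtracting the $u_1$-term, exactly as in the identity for $u^{(1)}(t)-tu_1$ in the proof of Lemma~\ref{le27} --- to regain uniformity. \textbf{This estimate is the principal obstacle.}

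For the necessity of (a)--(b): from a strict solution $u$ with $C^\alpha u,Au$ bounded with values in $(X,D(A))_{\theta,\infty}$ one reads off $u_0=u(0)\in D(A)$, $f=C^\alpha u-Au\in C([0,T];X)$ and bounded with values in $(X,D(A))_{\theta,\infty}$, and $Au_0=Au(0)\in(X,D(A))_{\theta,\infty}$. For (c) I would proceed in two steps. First a crude estimate: $v:=u-u_0-tu_1\in D(B^\alpha)$ with $B^\alpha v=C^\alpha u$, hence $v=B^{-\alpha}(C^\alpha u)$ and, by Proposition~\ref{pr2} and the boundedness of $C^\alpha u$, $\|v(t)\|_{(X,D(A))_{\theta,\infty}}\le Ct^\alpha$; from $u_1=t^{-1}[u(t)-u_0-v(t)]$ a $K$-functional argument in the spirit of Lemma~\ref{le18} (now using this spatial bound on $v$ rather than temporal H\"older continuity) gives $u_1\in(X,D(A))_{1-\frac1\alpha+\frac\theta\alpha,\infty}$, a space strictly finer than $(X,D(A))_{1-\frac1\alpha,\infty}$, so by Remark~\ref{re28} $u_1$ lies in the closure of $D(A)$ there; subtracting $u^{(2)},u^{(3)}$ (which exist since (a),(b) are now known) and using uniqueness identifies $u-u^{(2)}-u^{(3)}$ with the formula (\ref{eq42A}). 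Second, an upgrade: this identification shows $t\mapsto Au^{(1)}(t)$, now defined for all $t>0$ by (\ref{eq42A}), is bounded with values in $(X,D(A))_{\theta,\infty}$ (on $(0,T]$ it is a restriction of a difference of such functions; on $(T,\infty)$ the rescaling argument gives decay like $t^{1-\alpha}$), so its Laplace transform $\lambda^{\alpha-2}A(\lambda^\alpha-A)^{-1}u_1$ is $O((\mathrm{Re}\,\lambda)^{-1})$ in that norm; evaluating the interpolation norm at the matching scale and putting $\mu=\lambda^\alpha$ yields $\|A^2(\mu-A)^{-2}u_1\|\le C\mu^{\frac1\alpha-1-\theta}$, that is $u_1\in(X,D(A))_{1+\theta-\frac1\alpha,\infty}$. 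Unpacking: when $\theta<\frac1\alpha$ the exponent is below $1$ and this is (c); when $\theta>\frac1\alpha$ the exponent lies in $(1,2)$, forcing $u_1\in D(A)$ and $Au_1\in(X,D(A))_{\theta-\frac1\alpha,\infty}$, again (c); the excluded value $\theta=\frac1\alpha$ is exactly the borderline where the exponent equals $1$.

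Putting these together, under (a)--(c) the three pieces above furnish a strict solution with the stated regularity (uniqueness being already available), and conversely every such solution yields (a)--(c) by the remarks just made. Besides the $t\to0^+$ contour estimate for $u^{(1)}$, the delicate points I anticipate are the passage from the crude $K$-functional bound to the sharp interpolation space through the Laplace/resolvent argument, and the consistent bookkeeping of the dichotomy at $\theta=\frac1\alpha$.
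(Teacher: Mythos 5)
Your decomposition $u=u^{(1)}+u^{(2)}+u^{(3)}$ and the treatment of $u^{(2)},u^{(3)}$ coincide with the paper's, and your reduction of the $u^{(1)}$-piece to Lemma \ref{le27} is also the paper's starting point. The genuine gap is exactly the step you flag as "the principal obstacle": showing that $Au^{(1)}$ is bounded with values in $(X,D(A))_{\theta,\infty}$. You propose a direct contour estimate of $\sigma^\theta\|A(\sigma-A)^{-1}Au^{(1)}(t)\|$, note that it diverges as $t\to 0^+$, and leave the repair to an unexecuted cancellation argument; as written this is not a proof of the sufficiency of (c). The paper avoids the estimate entirely by a change of ambient space: the part of $A$ in $Y:=X_\theta=(X,D(A))_{\theta,\infty}$, with domain $X_{1+\theta}=\{x\in D(A):Ax\in X_\theta\}$, again satisfies hypothesis (H), and condition (c) together with the reiteration identity $(X_\theta,X_{1+\theta})_{1-1/\alpha,\infty}=X_{\theta+1-1/\alpha}$ places $u_1$ in $(Y,D(A_{|Y}))_{1-1/\alpha,\infty}$; one then simply re-applies Lemma \ref{le27} and Remark \ref{re28} with this $Y$ in place of $X$ to conclude that $Au^{(1)}$ is bounded with values in $X_\theta$. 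You should adopt this device (or actually carry out your estimate, which is substantially harder) before the sufficiency half can be considered complete.

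For the necessity of (c) your two-step scheme (a crude $K$-functional bound in the couple $(X,D(A))$ yielding only $(X,D(A))_{1-\frac1\alpha+\frac\theta\alpha,\infty}$, followed by an upgrade via the Laplace transform of the identified $u^{(1)}$ and an iterated-resolvent characterization of $X_{1+\theta-1/\alpha}$) is plausible but leans on facts the paper never establishes (the identification of the Laplace transform of $Au^{(1)}$ on all of $(0,\infty)$, and the $\|A^2(\mu-A)^{-2}\cdot\|$ characterization of interpolation spaces of order in $(1,2)$). The paper gets the sharp space in one stroke by running your very $K$-functional argument, but in the couple $(X_\theta,X_{1+\theta})$ rather than $(X,D(A))$: with $w(t)=u(t)/t$ one has $\|u_1-w(t)\|_{X_\theta}\le Ct^{\alpha-1}$ (from $\|v(t)\|_{X_\theta}\le Ct^\alpha$, which you already have) and $\|w(t)\|_{X_{1+\theta}}\le Ct^{-1}$ (since $Au$ is bounded in $X_\theta$), whence $K'(s,u_1)\le Cs^{1-1/\alpha}$ and $u_1\in(X_\theta,X_{1+\theta})_{1-1/\alpha,\infty}=X_{\theta+1-1/\alpha}$ directly. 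I recommend replacing your upgrade step by this; it both shortens the argument and removes the unproved ingredients.
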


\begin{proof}
We begin by showing that the conditions (a)-(c) are necessary. The necessity of (a)-(b) is clear. We show the necessity of (c). Let $u$ be a strict solution to (\ref{eq28}) with the required properties. We have $u_1 = u'(0)$. Then $u \in C^1([0, T]; X) \cap C([0, T]; D(A))$, $v:= u - u(0) - tu_1 \in D(B^\alpha)$ and $B^\alpha v$, $Au$ are bounded with values in $(X, D(A))_{\theta, \infty}$. We observe that, in force of (a), $u - u(0)$ enjoys the same properties and has the same derivative. So it is not restrictive to assume $u(0) = 0$. 

We shall employ the following fact (see .... ), which is a consequence of the Fatou property of the real interpolation method:

\medskip
{\it $(\gamma_1)$  Let $-\infty \leq a < b \leq \infty$, $f \in L^1(a, b; X)$. Suppose that there exists $g \in L^1(a, b)$ such that 
$$\|f(t)\|_{(X, D(A))_{\theta,\infty}} \leq g(t) \quad (a. e. \quad in \quad (a, b)). $$
Then $\int_a^b f(t) dt \in (X, D(A))_{\theta,\infty}$ and 
$$
\|\int_a^b f(t) dt\|_{(X, D(A))_{\theta,\infty}} \leq \int_a^b g(t) dt. 
$$
 }
\medskip
Let $h(t):= B^\alpha v(t)$. Then 
$$
v(t) = \frac{1}{\Gamma (\alpha)} \int_0^t (t-s)^{\alpha -1} h(s) ds. 
$$
Let $M \in \R^+$ be such that 
$$\|h(t)\|_{(X, D(A))_{\theta,\infty}} \leq M \quad  (a. e. \quad in \quad [a, b]). $$
Then from $(\gamma_1)$ we deduce that,  $\forall t \in (0, T]$, $v(t) \in (X, D(A))_{\theta,\infty}$ and 
$$
\|v(t)\|_{(X, D(A))_{\theta,\infty}} \leq C_1 t^\alpha. 
$$
Moreover, $\forall t \in (0, T]$ 
$$
u_1 = \frac{u(t)}{t} - \frac{v(t)}{t},
$$
implying $u_1 \in (X, D(A))_{\theta,\infty}$. Now, we set, for $\xi \in (0, 1)$, 
$$X_\xi:= (X, D(A))_{\xi,\infty}, \quad \|x\|_\xi := \|x\|_{(X, D(A))_{\xi,\infty}},$$
$$
X_{1+\xi} := \{x \in D(A) : Ax \in X_\xi\}, \quad \|x\|_{1+\xi} := \max\{\|x\|, \|Ax\|_{\xi}\}. 
$$
$X_{1+\xi}$ is a Banach space with the norm $\|\cdot\|_{1+\xi}$. It is known (as a consequence of the reiteration theorem) that, if $\xi \in (0, 1) \setminus \{1-\theta\}$, 
\begin{equation}\label{eq42}
(X_\theta, X_{1+\theta})_{\xi, \infty} = X_{\theta + \xi},
\end{equation}
with equivalent norms. 

Now we set, for $t > 0$, 
$$
w(t):= \left\{\begin{array}{lll}
\frac{u(t)}{t} & {\rm if } & t \in (0, T], \\ \\
0 & {\rm if } & t \in (T, \infty). 
\end{array}
\right. 
$$
We deduce 
$$
\|u_1 - w(t)\|_\theta = \|\frac{v(t)}{t}\|_\theta \leq C_1 t^{\alpha -1}. 
$$
Moreover, 
$$
\|w(t)\|_{1+\theta} \leq C_2 t^{-1}. 
$$
If $x \in X_\theta$ and $s \in \R^+$, we set
$$
K'(s,x) := \inf \{ \|x - y\|_\theta + s\|y\|_{1+\theta} : y \in X_{1+\theta}\}
$$
Setting, for $s \in \R^+$, 
$$
z(s) := w(s^{1/\alpha}), 
$$
we deduce
$$
K'(s, u_1) \leq \|u_1 - z(s)\|_{X_\theta} + s \|z(s)\|_{X_{1+\theta}} \leq C_2 s^{1-1/2}. 
$$
So $u_1$ should belong to $(X_\theta, X_{1+\theta})_{1- 1/\alpha, \infty}$, which, together with (\ref{eq42}), implies the necessity of (c). 

Now we prove the uniqueness of a solution. Of course, it suffices to show that the only solution with all data equal to zero is the trivial one. In fact, if $u$ solves (\ref{eq28}) with $f \equiv 0$, $u_0 = u_1 = 0$, then $u$ solves (\ref{eq20}) with $F \equiv 0$, and so the conclusion follows from Proposition \ref{pr15}. 

It remains to show the existence, in case (a)-(c) hold. By linearity, we can show that a solution with the stated regularity exists in each in the three cases where two of the three data are zero. The case $u_0 = u_1 = 0$ is covered by Proposition \ref{pr15} (III). 
The case $u_1 = 0$ can be treated following the same argument of the case $\alpha \in (0, 1)$ (see the proof of Proposition \ref{pr23}, (II)). It remains to consider the case $u_0 = 0$, $f \equiv 0$ and $u_1$ as in (c). For this, we can employ Lemma \ref{le27}.
In fact, formula (\ref{eq42A}) furnishes a strict solution (recall Remark \ref{re28}). We have only to show that this strict solution $u$ is such that $Au$ (and so also $C^\alpha u$) are bounded with values in $X_\theta$. If $t \in (0, T]$, we have
$$
Au(t) = \frac{1}{2\pi i} \int_\Gamma \lambda^{\alpha -2} A (\lambda^\alpha - A)^{-1} u_1 d\lambda. 
$$
The part of $A$ in $X_\theta$, with domain $X_{1+\theta}$, satisfies (H) in Proposition \ref{pr20}. Moreover, $u_1 \in (X_\theta, X_{1+\theta})_{1 - 1/\alpha, \infty}$. So we can employ Lemma \ref{le27}, together with Remark \ref{re28}, with $Y = X_\theta$, to deduce that $Au$ is bounded with values in $X_\theta$. 

\end{proof}

It remains to extend (III) in Proposition \ref{pr23}  to the case $\alpha \in (1, 2)$. 

\begin{theorem} 
Let $\alpha \in (1, 2)$, $A$ a linear operator in $X$ satisfying (H) in Proposition \ref{pr20}. Let $\beta \in (0, \alpha) \setminus \{1\}$. Consider system (\ref{eq28}). Then the following conditions are necessary and sufficient, in order that there exists a unique strict solution 
$u$ such that $C^\alpha u$, $Au$ are bounded with values in $C^\beta ([0, T]; X)$: 

(a) $f \in C^\beta ([0, T]; X)$; 

(b) $u_0 \in D(A)$; 

(c) if $\beta \in (0, 1)$, $u_1 \in (X, D(A))_{1 - \frac{1-\beta}{\alpha}, \infty}$; if $\beta \in (1, 2)$, $u_1 \in D(A)$; 

(d) $Au_0 + f(0) \in (X, D(A))_{\frac{\beta}{\alpha},\infty}$; 

(e) if $\beta \in (1, 2)$, $Au_1 + f'(0) \in (X, D(A))_{\frac{\beta-1}{\alpha},\infty}$. 

\end{theorem}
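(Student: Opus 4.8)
The plan is to reduce, by linearity, to two situations already treated: the abstract equation (\ref{eq20}) for $B^\alpha$, settled in Proposition \ref{pr19}, and the pure initial-velocity problem of Lemma \ref{le27}. (Throughout, Propositions \ref{pr15} and \ref{pr19} are invoked in the form valid under hypothesis (H), i.e.\ via Proposition \ref{pr20}.) Uniqueness is immediate in every case: if $f\equiv 0$ and $u_0=u_1=0$ then $u\in D(B^\alpha)$ solves (\ref{eq20}) with right-hand side $0$, so $u\equiv 0$ by Proposition \ref{pr15}(I). For the rest, note that a strict solution forces $u_0=u(0)\in D(A)$, and that $u\mapsto u-u_0$ (when $u_0\in D(A)$) and, when $\beta>1$ and $u_1\in D(A)$, $u\mapsto u-tu_1$ turn a strict solution of (\ref{eq28}) into a function in $D(B^\alpha)$ solving (\ref{eq20}) with right-hand side $\hat f:=f+Au_0+tAu_1$ (the last term present only if $\beta>1$); these substitutions leave all conclusions intact and satisfy $\hat f(0)=f(0)+Au_0$, $\hat f'(0)=f'(0)+Au_1$, $C^\alpha u=B^\alpha(u-u_0-tu_1)$.

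\emph{The case $\beta\in(1,2)$.} I would first show the necessity of $u_1=u'(0)\in D(A)$: as $Au\in C^\beta\subset C^1$, the difference quotients $t^{-1}(u(t)-u(0))$ converge to $u_1$ while $At^{-1}(u(t)-u(0))=t^{-1}(Au(t)-Au(0))$ converges to $(Au)'(0)$, and the closedness of $A$ yields $u_1\in D(A)$, $Au_1=(Au)'(0)$. With (b) and (c) in hand the two reductions above are legitimate, and the whole statement becomes Proposition \ref{pr19} applied to $\hat f$: its hypothesis (a) is our (a) (using $u_0\in D(A)$), and its hypothesis (b) for $k=0$ and $k=1$ is our (d) and (e); it returns the unique strict solution with $B^\alpha(u-u_0-tu_1)=C^\alpha u\in C^\beta$ and $A(u-u_0-tu_1)\in C^\beta$, hence $Au\in C^\beta$ after adding back $Au_0+tAu_1$.

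\emph{The case $\beta\in(0,1)$.} Here (c) no longer forces $u_1\in D(A)$, so I cannot reduce to (\ref{eq20}) and must handle necessity and existence by hand. After reducing to $u_0=0$, Proposition \ref{pr10}(II) applied to $v:=u-tu_1\in D(B^\alpha)$, $B^\alpha v=C^\alpha u\in C^\beta$, gives $u(t)=tu_1+w(t)+t^\alpha g_0$ with $w\in\stackrel{o}{{\mathcal C}^{\alpha+\beta}}([0,T];X)$ (whence $\|w(t)\|\leq Ct^{\alpha+\beta}$, even if $\alpha+\beta\in\N$ by Proposition \ref{pr8A}) and $g_0=\Gamma(\alpha+1)^{-1}(C^\alpha u)(0)$; moreover $\|Au(t)\|\leq Ct^\beta$ since $Au\in C^\beta$, $Au(0)=0$. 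Eliminating $g_0$, resp.\ $u_1$, between the identities at $t$ and $2t$ gives $u_1=\psi(t)+O(t^{\alpha+\beta-1})$ and $g_0=\chi(t)+O(t^\beta)$, where $\psi(t)=\frac{2^\alpha u(t)-u(2t)}{(2^\alpha-2)t}$ and $\chi(t)=\frac{2u(t)-u(2t)}{(2-2^\alpha)t^\alpha}$ lie in $D(A)$ with $\|\psi(t)\|_{D(A)}\leq Ct^{\beta-1}$, $\|\chi(t)\|_{D(A)}\leq Ct^{\beta-\alpha}$. Feeding $y=\psi(s^{1/\alpha})$, resp.\ $y=\chi(s^{1/\alpha})$, into the $K$-functional of $(X,D(A))$ exactly as in the proof of Lemma \ref{le18} yields $K(s,u_1)\leq Cs^{1-(1-\beta)/\alpha}$ and $K(s,g_0)\leq Cs^{\beta/\alpha}$; since $(C^\alpha u)(0)=f(0)+Au_0$ by the equation, this is the necessity of (c) and (d). Conversely, assuming (a)--(d), I build $u=u_0+u^{(a)}+u^{(b)}$, where $u^{(b)}$ is the function (\ref{eq42A}) of Lemma \ref{le27} with datum $u_1$ — admissible because $u_1\in(X,D(A))_{1-(1-\beta)/\alpha,\infty}$ with $1-(1-\beta)/\alpha>1-1/\alpha$, so by Remark \ref{re28} $u_1$ lies in the closure of $D(A)$ in $(X,D(A))_{1-1/\alpha,\infty}$ — and $u^{(a)}$ solves (\ref{eq20}) with zero Cauchy data and right-hand side $f+Au_0$; Proposition \ref{pr19} (whose only demand for $\beta<1$ is $(f+Au_0)(0)\in(X,D(A))_{\beta/\alpha,\infty}$, i.e.\ (d)) gives $B^\alpha u^{(a)},Au^{(a)}\in C^\beta$, while $C^\alpha u^{(b)}=Au^{(b)}\in C^\beta$ because, using $\|A(\lambda^\alpha-A)^{-1}u_1\|\leq C|\lambda|^{-(\alpha-1+\beta)}$ (Proposition \ref{pr13}(I)) and differentiating (\ref{eq42A}) after applying $A$, one gets $\|(Au^{(b)})'(t)\|\leq C\int_\Gamma e^{Re(\lambda)t}|\lambda|^{-\beta}\,|d\lambda|\leq Ct^{\beta-1}$ together with $Au^{(b)}(0)=0$. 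Then $C^\alpha u=B^\alpha u^{(a)}+Au^{(b)}$ and $Au=Au_0+Au^{(a)}+Au^{(b)}$ are in $C^\beta$.

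The step I expect to be the genuine obstacle is the necessity of the sharp condition (c) for $\beta<1$: a crude estimate via $v/t$ with $v=u-tu_1$ yields only $u_1\in(X,D(A))_{1-1/\alpha,\infty}$, and gaining the extra $\beta/\alpha$ of smoothness requires exploiting that the regular remainder $w$ of $u$ lies in $\stackrel{o}{{\mathcal C}^{\alpha+\beta}}$ — i.e.\ decays like $t^{\alpha+\beta}$ rather than $t^\alpha$ — which is exactly the device of Lemma \ref{le18}; balancing $t^{\alpha+\beta-1}$ against $st^{\beta-1}$ at $t=s^{1/\alpha}$ is what produces the exponent $1-(1-\beta)/\alpha$ (and, with $\alpha$ in place of $\alpha-1$, the exponent $\beta/\alpha$ for $g_0$). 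The remaining points — the contour estimates for $u^{(b)}$ and the bookkeeping that the reductions $u\mapsto u-u_0$, $u\mapsto u-tu_1$ preserve hypotheses (a)--(e) — are routine.
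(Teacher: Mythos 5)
Your proof is correct and follows essentially the same route as the paper's: the same splitting $u = u_0 + u^{(a)} + u^{(b)}$ with $u^{(b)}$ given by (\ref{eq42A}) and Lemma \ref{le27}, the same $t$-versus-$2t$ elimination combined with the $K$-functional (the device of Lemma \ref{le18}) for the necessity of (c) when $\beta<1$, the same contour estimate $\|(Au^{(b)})'(t)\| \leq C t^{\beta-1}$, and the same reduction to Proposition \ref{pr19} for the remaining pieces. The only small divergences are that you justify the necessity of $u_1 \in D(A)$ for $\beta>1$ via closedness of $A$ (the paper declares it clear) and that you extract the necessity of (d) for $\beta<1$ directly from the $K$-functional bound on $g_0 = \Gamma(\alpha+1)^{-1}(C^\alpha u)(0)$ rather than by first subtracting $u^{(b)}$ and then invoking Proposition \ref{pr19}; both variants are sound.
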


\begin{proof} We show the claim in several steps. 

\medskip
{\it ($\gamma_1$) The fact that (a)-(b) and (c) in case $\beta > 1$ are necessary is clear. }
\medskip

{\it ($\gamma_2$) We show that (c) is necessary  in case $\beta \in (0, 1)$. }
\medskip

Let $u$ be a solution with the declared regularity. Then, by Proposition \ref{pr10}, 
$$
u(t) = u_0 + t u_1 + t^\alpha f_0 + v(t),
$$
with $f_0 \in X$ and $v \in$  $\stackrel{o}{\mathcal C}^{\alpha+\beta}([0, T]; X)$. We deduce, for $t \in (0, \frac{T}{2}]$, 
$$
u_1 = \frac{2^\alpha [u(t) - u_0] - [u(2t) - u_0]}{2^{\alpha -1}t} + \frac{v(2t) - 2^\alpha v(t)}{2^{\alpha -1}t}. 
$$
Setting
$$
w(t):= \frac{2^\alpha [u(t) - u_0] - [u(2t) - u_0]}{2^{\alpha -1}t}, \quad t \in (0, \frac{T}{2}], 
$$
We obtain 
$$
\|u_1 - w(t)\| = \|\frac{v(2t) - 2^\alpha v(t)}{2^{\alpha -1}t}\| \leq C_1 t^{\alpha +\beta -1}
$$
and 
$$
\|w(t)\|_{D(A)} \leq C_2 t^{\beta -1}. 
$$
which implies $(\gamma_2)$. 

\medskip

{\it ($\gamma_3$) Let  $\beta \in (0, 1)$, $f \equiv 0$, $u_0 = 0$, $u_1 \in  (X, D(A))_{1 - \frac{1-\beta}{\alpha}, \infty}$; then (\ref{eq28}) has a solution $u$ such that $C^\alpha u$, $Au \in C^\beta([0, T]; X)$. }
\medskip

\medskip

We consider the function $u$ defined in (\ref{eq42A}). By Lemma \ref{le27} and Remark \ref{re28}, $u$ is a strict solution to (\ref{eq28}).  It remains to show that $Au$ (and so also $C^\alpha u$) belong to $C^\beta([0, T]; X)$. The argument is the 
usual: if $t \in (0, T]$, 
$$
Au'(t) = \frac{1}{2\pi i} \int_\Gamma \lambda^{\alpha -1} A(\lambda^\alpha - A)^{-1} u_1 d\lambda. 
$$
so that
$$
\|\lambda^{\alpha -1} A(\lambda^\alpha - A)^{-1}u_1\| \leq C_3 |\lambda|^{-\beta}, 
$$
and, with the usual arguments, 
$$
\|Au'(t)\| \leq C_4 t^{\beta - 1}
$$
and $Au \in C^\beta([0, T]; X)$. 

\medskip

{\it ($\gamma_4$) If $\beta \in (0, 1)$, Condition (d) is necessary.  }
\medskip

In fact, let $u$ be a solution to (\ref{eq28}) with the desired regularity. Let $u_1$ be the function $u$ defined in (\ref{eq42A}). We set $u_2(t):= u(t) - u_1(t)$. Then, by $(\gamma_3)$, $u_2$ is a solution with data $f$, $u_0$, $0$. We set
$$
v(t) := u_2(t) - u_0. 
$$
Then, as $u_0 \in D(A)$, $v \in D(B^\alpha) \cap C^\beta ([0, T]; D(A))$, $B^\alpha v = C^\alpha u_2 \in C^\beta ([0, T]; X)$ and $v$ solves (\ref{eq29}). By Proposition \ref{pr19}, condition (d) holds. 

\medskip

{\it ($\gamma_5$) The claim holds in case $\beta \in (0, 1)$.  }
\medskip

\medskip
We have already seen that conditions (a)-(d) are necessary. We show that they are also sufficient. The uniqueness of a strict solution follows from Propositions \ref{pr19}-\ref{pr20}. In fact, if all the data are zero, $u$ is a solution of (\ref{eq20}) with $f \equiv 0$. This implies $u(t) \equiv 0$. In order to show the existence, we subtract to $u$ the function
$$u_1(t) = \frac{1}{2\pi i} \int_\Gamma e^{\lambda t} \lambda^{\alpha -2} (\lambda^\alpha - A)^{-1} u_1 d\lambda.$$
By $(\gamma_3)$ $u_1$ is a solution with the declared regularity to (\ref{eq28}) if we replace $u_0$ and $f$ with $0$. So, by difference, $u_2:= u-u_1$ is a solution in case $u_1 = 0$. We set
$$
v(t):= u_2(t) - u_0.
$$
Then $B^\alpha v = C^\alpha u_2 \in C^\beta([0, T]; X)$ and
$$
Av(t) = Au_2(t) - Au_0 \in C^\beta([0, T]; X). 
$$
Moreover, 
\begin{equation}\label{eq44}
B^\alpha v(t) - Av(t) =  f_1(t):= f(t) + Au_0, \quad t \in [0, T]. 
\end{equation}
By Proposition \ref{pr19}, necessarily $f_1(0) = Au_0 + f(0) \in (X, D(A))_{\frac{\beta}{\alpha}, \infty}$. Suppose that $\beta \in (0, 1)$ and (a)-(d) hold. Subtracting $u_1$, we are reduced to the case $u_1 = 0$. Consider the equation (\ref{eq44}). Then (a)-(d) allow to apply Proposition \ref{pr19}, assuring that (\ref{eq44}) has a unique solution $v$ with $B^\alpha v$, $Av$ in $C^\beta([0, T]; X)$. Setting
$$
u(t):= v(t) + u_0, 
$$
it is easily seen that $u$ is a solution with the desired regularity. 

\medskip
{\it ($\gamma_6$) The claim holds in case $\beta \in (1, 2)$. }
\medskip

In this case, both $u_0$, $u_1$ necessarily belong to $D(A)$. Set
$$
v(t) := u(t) - u_0 - t u_1. 
$$
Then $v \in D(B^\alpha) \cap C^\beta([0, T]; D(A))$ and $B^\alpha v = C^\alpha u \in C^\beta([0, T]; D(A))$. Moreover, $v$ is a solution to
\begin{equation}\label{eq45}
B^\alpha v(t) - Av(t) = f_1(t) = f(t) + Au_0 + tAu_1. 
\end{equation}
By Proposition \ref{pr19}, necessarily
$$
f_1(0) = Au_0 + f(0) \in (X, D(A))_{\frac{\beta}{\alpha}, \infty}, \quad f'_1(0) = Au_1 + f'(0) \in (X, D(A))_{\frac{\beta-1}{\alpha}, \infty}.
$$
So the conditions (a)-(e) are necessary. To show that they are sufficient, we can solve (\ref{eq45}): applying Proposition \ref{pr19}, we deduce that there is a unique solution $v$ each that $B^\alpha v$, $Av$ belong to $C^\beta ([0, T]; X)$. Then 
$$
u(t) = v(t) + u_0 + tv_1
$$
is a solution to (\ref{eq28}) with the declared properties. 

\end{proof}

\begin{remark}\label{re24}
{\rm It may be of interest an explicit formula for the solution (if existing) to (\ref{eq28}). We begin by considering the case $\alpha \in (0, 1)$. We have $u(t) = v(t) + u_0$, with $v$ solving (\ref{eq29}). From Propositions \ref{pr13}-\ref{pr15}, we get 
$$
\begin{array}{c}
u(t)
= u_0 +  \frac{1}{2\pi i} \int_0^t (\int_\Gamma e^{\mu(t-s)} (\mu^\alpha - A)^{-1} d\mu) Au_0 ds + \delta(\alpha) \frac{1}{2\pi i} \int_\Gamma e^{\mu t} \mu^{\alpha-2}(\mu^\alpha - A)^{-1} u_1 d\mu  \\ \\
+   \frac{1}{2\pi i} \int_0^t (\int_\Gamma e^{\mu(t-s)} (\mu^\alpha - A)^{-1} d\mu) f(s) ds, 
\end{array}
$$
with 
$$
\delta(\alpha) = \left\{\begin{array}{lll}
0 & {\rm if } & 0 < \alpha < 1, \\ \\
1 & {\rm if } & 1 < \alpha < 2, 
\end{array}
\right. 
$$
 $\Gamma$ joining $\infty e^{-i\theta}$ to $\infty e^{i\theta}$ for some $\theta \in (\frac{\pi}{2}, \pi)$, and $\mu^\alpha \in \rho(A)$ $\forall \lambda \in \Gamma$ (see Remark \ref{re21}).  We set, for $t \in \R^+$, 
\begin{equation}
\begin{array}{c}
H(t) := \frac{1}{2\pi i} \int_\Gamma e^{\mu t} \mu^{\alpha -1}(\mu^\alpha - A)^{-1} d\mu, \\ \\
\end{array}
\end{equation}
\begin{equation}
\begin{array}{c}
S(t) := \frac{1}{2\pi i} \int_\Gamma e^{\mu t} (\mu^\alpha - A)^{-1} d\mu. 
\end{array}
\end{equation}
From  We deduce that $S(t) \in \mL(X)$ and, if $t \in (0, 1]$, 
$$
\|S(t)\|_{\mL(X)} \leq C_1 \int_{t^{-1} \Gamma_1} e^{tRe(\mu)} |\mu|^{-\alpha} |d\mu| = C_1 t^{\alpha -1} \int_{\Gamma_1} e^{Re(\mu)} |\mu|^{-\alpha} |d\mu|.
$$
We deduce that
$$
\|S(t)\|_{\mL(X)} \leq C t^{\alpha - 1}, \quad \forall t \in (0, T]. 
$$
From Proposition \ref{pr7A}, we deduce also that
$$
\|H(t)\|_{\mL(X)} \leq C,
$$
and, in case $f \in \overline{D(A)}$, 
$$
\lim_{t \to 0} H(t)f = f. 
$$
We have
$$
\begin{array}{c}
\int_0^t S(s) Au_0 ds \\ \\
= \frac{1}{2\pi i} \int_\Gamma \frac{e^{\mu t} - 1}{\mu} (\mu^\alpha - A)^{-1} Au_0 d\mu = \frac{1}{2\pi i} \int_\Gamma e^{\mu t} \mu^{-1} A(\mu^\alpha - A)^{-1} u_0 d\mu \\ \\
= -\frac{1}{2\pi i} \int_\Gamma e^{\mu t} \mu^{-1} u_0 d\mu + H(t) u_0  \\ \\
= - u_0 + H(t) u_0,
\end{array}
$$
as $\int_\Gamma \mu^{-1} (\mu^\alpha - A)^{-1} Au_0 d\mu = 0$. 
So 
$$
u_0 +  \int_0^t S(s) Au_0 ds = H(t)u_0. 
$$
Let $\gamma$ be a piecewise, simple, regular path, joining $\infty e^{-i\theta_1}$ to $\infty e^{i\theta_1}$,  for some $\theta_1 \in (\theta, \pi)$, contained in $\C \setminus (-\infty, 0]$ and in same connected component of $\C \setminus \Gamma$ containing $0$. Then, by Proposition \ref{pr2},  for $t \in (0, T]$, 
$$
\begin{array}{c}
\frac{1}{\Gamma (1-\alpha)} \int_0^. (.-s)^{-\alpha} S(s) u_0 ds = - \frac{1}{2\pi i} \int_\gamma \lambda^{\alpha - 1} (\lambda - B)^{-1} (\frac{1}{2\pi i} \int_\Gamma e^{\mu .} (\mu^\alpha - A)^{-1}u_0 d\mu) d\lambda \\ \\
= - \frac{1}{2\pi i} \int_\gamma \lambda^{\alpha - 1}  (\frac{1}{2\pi i} \int_\Gamma e^{\mu .} (\lambda - \mu)^{-1}  (\mu^\alpha - A)^{-1}u_0 d\mu) d\lambda \\ \\
=  \frac{1}{2\pi i} \int_\Gamma (-\frac{1}{2\pi i} \int_\gamma  \lambda^{\alpha - 1}   (\lambda - \mu)^{-1} d\lambda)  e^{\mu .} (\mu^\alpha - A)^{-1} u_0 d\mu \\ \\
= \frac{1}{2\pi i} \int_\Gamma e^{\mu .} \mu^{\alpha - 1} (\mu^\alpha - A)^{-1}u_0 d\mu
\end{array}
$$
Moreover, 
$$
\frac{1}{2\pi i} \int_\Gamma e^{\mu t} \mu^{\alpha-2}(\mu^\alpha - A)^{-1} u_1 d\mu = \int_0^t H(s) u_1 ds. 
$$
We deduce the following
\begin{equation}\label{eq31}
u(t) = H(t)u_0 + \delta(\alpha) \int_0^t H(s) u_1 ds + \int_0^t S(t-s) f(s) ds.
\end{equation}

}
\end{remark}

\begin{example}
{\rm Let $\Omega$ be an open, bounded subset of $\R^n$, lying on one side of its boundary $\partial \Omega$, which is an submanifold of $\R^{n}$ of dimension $n-1$ and class $C^3$. We introduce in $\Omega$ the following operator $\tilde A$: 
\begin{equation}\label{eq32}
\left\{ \begin{array}{l}
D(A) = \{u \in \cap_{1 \leq p < \infty} W^{2,p}(\Omega) : \Delta u \in C(\overline \Omega), D_\nu u_{|\partial \Omega} = 0\}, \\ \\
A u = \Delta u. 
\end{array}
\right. 
\end{equation}
We think of $A$ as an unbounded operator in 
\begin{equation}
X:= C(\overline \Omega). 
\end{equation}
The spectrum $\sigma(A)$ of $A$ is contained in $(-\infty, 0]$. Moreover, $\forall \epsilon \in (0, \pi)$, $\forall \lambda_0 \in \R^+$ there exists $C(\epsilon)$ such that if $\lambda \in \C$ and $|\lambda| \geq \epsilon$ and $|Arg(\lambda)| \leq \pi - \epsilon$, 
$$
\|(\lambda - A)^{-1}\|_{\mL(X)} \leq C(\epsilon) |\lambda|^{-1}
$$
Now we set
\begin{equation}
A:= e^{i\phi} A, 
\end{equation}
with $\phi \in (-\pi, \pi]$. 
Then we can say that, $\forall \lambda_0 \in \R^+$, $A + \lambda_0$ is an operator of type $\frac{\pi}{2}$. 
Moreover, it is known (see \cite{Gu1}) that, $\forall \theta \in (0, 1) \setminus \{0\}$, 
\begin{equation}\label{eq36}
(C(\overline \Omega), D(A))_{\theta,\infty} = \left\{\begin{array}{lll}
C^{2\theta}(\overline \Omega) & {\rm if } & 0 < \theta < \frac{1}{2}, \\ \\
\{u \in C^{2\theta}(\overline \Omega): D_\nu u_{|\partial \Omega} = 0\}  & {\rm if } & \frac{1}{2} < \theta < 1. 
\end{array}
\right. 
\end{equation}
We fix $\alpha$ in $(0, 2) \setminus \{1\}$, $\phi \in (-\pi, \pi]$  and consider the mixed problem

\begin{equation}\label{eq35}
\left\{\begin{array}{ll}
C^\alpha u(t,x) -  e^{i\phi} \Delta_x u(t,x) = f(t,x), & (t,x) \in [0, T] \times \Omega, \\ \\
D_{\nu} u(t,x') = 0, & (t,x') \in [0, T] \times \partial \Omega, \\ \\
D_t^k u(0,x) = u_k(x), & k \in \N_0, k < \alpha,  x \in \Omega. 
\end{array}
\right. 
\end{equation}
Let $\beta, \gamma \in [0, \infty)$, $f : [0, T] \times \overline \Omega \to \C$. We shall write $f \in C^{\beta,\gamma}([0, T] \times \overline \Omega)$ if $f \in C^\beta ([0, T]; C(\overline \Omega)) \cap B([0, T]; C^\gamma(\overline \Omega))$. Then the following result holds: 

\begin{theorem}\label{th26}
Consider system (\ref{eq35}). Let $\alpha \in (0, 2) \setminus \{1\}$, $|\phi| < \frac{(2-\alpha) \pi}{2}$. Then: 

(I) if $\gamma \in (0, 1)$, $\gamma - \frac{2}{\alpha} \not \in \Z$,  the following conditions are necessary and sufficient, in order that there exist a unique strict solution $u$ belonging to $D(C^\alpha) \cap C([0, T]; D(A))$, with $C^\alpha u$ and $Au$ bounded with values in $C^\gamma (\overline \Omega)$: 

(a) $u_0 \in C^{2+\gamma}(\overline \Omega)$, $D_\nu u_0 = 0$; 

(b) $f \in C([0, T] \times \overline \Omega) \cap B([0, T]; C^{\gamma}(\overline \Omega)$; 

(c) if $\alpha \in (1, 2)$, $u_1 \in C^{\gamma + 2 - \frac{2}{\alpha}}(\overline \Omega)$ and, if $\gamma + 2 - \frac{2}{\alpha} > 1$, $D_\nu u_1 = 0$. 

Let $\beta \in (0, \alpha) \setminus \{1\}$ $\frac{2\beta}{\alpha}  \not \in \Z$. Then the following conditions are necessary and sufficient, in order that there exist a unique strict solution $u$  such that $C^\alpha u$ and $Au = \Delta u$ belong to
$C^{\beta}([0, T]; C(\overline \Omega))$: 

(d) $u_0 \in D(A)$, $e^{i\phi} \Delta u_0 + f(0,\cdot) \in C^{\frac{2\beta}{\alpha}}(\overline \Omega)$ and, if $\frac{2\beta}{\alpha} > 1$, $D_\nu(e^{i\phi} \Delta u_0 + f(0,\cdot)) = 0$; 

(e) $f \in C^\beta([0, T]; C(\overline \Omega))$; 

(f) if $\alpha \in (1, 2)$, $\beta \in (0, 1)$, $u_1 \in C^{2- \frac{2(1-\beta)}{\alpha}}(\overline \Omega)$ and, if $2- \frac{2(1-\beta)}{\alpha} > 1$, $D_\nu u_1 = 0$; if $1 < \beta < \alpha < 2$, $u_1 \in D(A)$; 

(g) if $1 < \beta < \alpha < 1$, $e^{i\phi} \Delta u_1 + D_t f(0,\cdot) \in C^{\frac{2(\beta - 1)}{\alpha}}(\overline \Omega)$. 
\end{theorem}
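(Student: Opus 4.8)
The plan is to read Theorem~\ref{th26} off the abstract results already proved, applied with $\mX := C([0,T];X)$, $X := C(\overline\Omega)$ and $A := e^{i\phi}\Delta$ with the domain in (\ref{eq32}): Proposition~\ref{pr23} when $\alpha\in(0,1)$, and the two theorems established above for $\alpha\in(1,2)$. First I would check that this $A$ satisfies hypothesis (H) of Proposition~\ref{pr20}. The Laplacian with homogeneous Neumann condition is sectorial on $C(\overline\Omega)$ with $\sigma(\Delta)\subseteq(-\infty,0]$ and resolvent estimates of ``almost angle $\pi$'', as recalled in the construction of (\ref{eq32}); multiplying by $e^{i\phi}$ rotates the spectral ray of $\Delta$ to argument $\phi+\pi$, so for any fixed $\lambda_0\geq 0$ the resolvent set of $A$ contains $\{\lambda:|Arg(\lambda-\lambda_0)|<\psi\}$ with the bound $\|(\lambda-\lambda_0)(\lambda-A)^{-1}\|\leq M(\psi)$ for every $\psi<\pi-|\phi|$. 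The hypothesis $|\phi|<\frac{(2-\alpha)\pi}{2}$ is exactly $\pi-|\phi|>\frac{\alpha\pi}{2}$, so (H) holds with some $\psi\in(\frac{\alpha\pi}{2},\pi-|\phi|)$, making Propositions~\ref{pr15}, \ref{pr19}, \ref{pr20}, \ref{pr23} and the two $\alpha\in(1,2)$ theorems available here.

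The second step is a dictionary translation via (\ref{eq36}): for $\theta\in(0,1)\setminus\{\tfrac12\}$ one has $(C(\overline\Omega),D(A))_{\theta,\infty}=C^{2\theta}(\overline\Omega)$ if $\theta<\tfrac12$ and $=\{u\in C^{2\theta}(\overline\Omega):D_\nu u_{|\partial\Omega}=0\}$ if $\theta>\tfrac12$, and, by Schauder estimates for $\Delta$ with Neumann data, the same kind of description holds for $X_{1+\theta}=\{u\in D(A):Au\in(X,D(A))_{\theta,\infty}\}$. Thus every abstract hypothesis of the form ``datum $\in(X,D(A))_{\kappa,\infty}$'' becomes ``datum $\in C^{2\kappa}(\overline\Omega)$, with $D_\nu(\cdot)_{|\partial\Omega}=0$ when $2\kappa>1$'', which is exactly the shape of (a)--(g); ``$f$ bounded with values in $(X,D(A))_{\theta,\infty}$'' becomes ``$f\in C([0,T]\times\overline\Omega)\cap B([0,T];C^{2\theta}(\overline\Omega))$'', and ``$f\in C^\beta([0,T];X)$'' is unchanged. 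The scalar exclusions $\gamma-\tfrac2\alpha\notin\Z$ and $\tfrac{2\beta}{\alpha}\notin\Z$ are imposed precisely so that the interpolation indices occurring — among them $\gamma/2$, $\beta/\alpha$, $\tfrac\beta\alpha+1-\tfrac1\alpha$ and $\tfrac12(\gamma+2-\tfrac2\alpha)$ — stay off $\tfrac12$ (where (\ref{eq36}) and the reiteration identity (\ref{eq42}) fail) and, in the $\alpha\in(1,2)$ statements, so that $\gamma/2\neq 1/\alpha$ and $\beta\neq 1$; the other exponents are then genuine non-integer Hölder exponents and (\ref{eq36}) applies directly.

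With this dictionary the argument is bookkeeping. For $\alpha\in(0,1)$: uniqueness in each assertion is Proposition~\ref{pr23}(I); the assertion about $C^\alpha u,Au$ bounded in $C^\gamma(\overline\Omega)$ is Proposition~\ref{pr23}(II) with $\theta=\gamma/2$; the assertion about $C^\alpha u,Au\in C^\beta([0,T];C(\overline\Omega))$ is Proposition~\ref{pr23}(III) with index $\beta/\alpha$. For $\alpha\in(1,2)$: the first assertion is the first $\alpha\in(1,2)$ theorem with $\theta=\gamma/2$ — its condition on $u_1$ produces ``$u_1\in C^{\gamma+2-\frac2\alpha}(\overline\Omega)$, with $D_\nu u_1=0$ if $\gamma+2-\frac2\alpha>1$'', because here $\gamma/2<\tfrac12<1/\alpha$ always; the second assertion is the second $\alpha\in(1,2)$ theorem, whose conditions (a)--(e) become (d)--(g) after the same substitutions, with $u_1\in D(A)$ when $\beta\in(1,\alpha)$ and the conditions on $Au_0+f(0,\cdot)$, $Au_1+D_tf(0,\cdot)$ rewritten as the displayed Hölder conditions (the latter needing no boundary condition since $2(\beta-1)/\alpha<2-2/\alpha<1$). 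Collecting the translated conditions in each case yields exactly (a)--(g).

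The step I expect to cost effort is the middle one: matching each abstract interpolation condition to the correct Hölder exponent and, above all, tracking exactly when the extra boundary condition $D_\nu(\cdot)_{|\partial\Omega}=0$ is forced — which happens precisely when the relevant exponent exceeds $1$, i.e. the interpolation index exceeds $\tfrac12$ — and verifying that the stated scalar exclusions are exactly what is needed to keep all occurring indices off $\tfrac12$ and off $1/\alpha$. The identification of $X_{1+\theta}$ with a Hölder space subject to the Neumann (and, for large enough exponent, the ``$D_\nu\Delta u=0$'') conditions is standard elliptic regularity and should be cited rather than reproved.
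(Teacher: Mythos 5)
Your proposal is correct and follows essentially the same route as the paper: the paper's own proof is a short paragraph that applies Proposition~\ref{pr23} (and, implicitly, the two $\alpha\in(1,2)$ theorems) and translates the interpolation conditions through (\ref{eq36}) with $\theta=\gamma/2$, together with elliptic regularity to identify $D(A)\cap\{\Delta u_0\in C^\gamma\}$ with $C^{2+\gamma}(\overline\Omega)$ plus the Neumann condition. Your write-up is in fact more careful than the paper's, in particular in verifying hypothesis (H) from $|\phi|<\frac{(2-\alpha)\pi}{2}$ and in tracking which interpolation indices must avoid $\tfrac12$.
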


\begin{proof}
We employ Proposition (\ref{pr23}). We observe that $\frac{\alpha \pi}{2} + \frac{\pi}{2} < \pi$ $\forall \alpha \in (0, 1)$. So the proposition is applicable. The conclusion easily follows from (\ref{eq36}), taking $\theta = \frac{\gamma}{2}$ and recalling that $u_0 \in D(A)$ and $\Delta u_0 \in C^\gamma (\overline \Omega)$ imply $u_0 \in C^{2+\gamma}(\overline \Omega)$. 
\end{proof}

The "natural" relationship between $\alpha$, $\beta$, $\gamma$ is $\frac{2\beta}{\alpha} = \gamma$ (recall the case $\alpha = 1$ for parabolic problems, see \cite{Lu1}). 
}
\end{example}

\end{document}